\pdfoutput=1

\documentclass[reqno,11pt]{amsart}
\usepackage{geometry}
\geometry{
 twoside,
 paperwidth=210mm,
 paperheight=297mm,
 textheight=622pt,
 textwidth=468pt,
 centering,
 headheight=50pt,
 headsep=12pt,
 footskip=18pt,
 footnotesep=24pt plus 2pt minus 12pt,
 columnsep=2pc
}
\linespread{1.1} 
\setcounter{secnumdepth}{2} 

\usepackage[T1]{fontenc}
\usepackage{type1ec}
\usepackage[utf8]{inputenc}
\usepackage[english]{babel}
\usepackage{csquotes}
\usepackage[shortcuts]{extdash} 

\usepackage[usenames,dvipsnames]{xcolor} 
\usepackage[
 unicode=true,
 pdfusetitle,
 bookmarks=true,
 bookmarksnumbered=false,
 bookmarksopen=false,
 breaklinks=false,
 pdfborder={0 0 1},
 backref=false,
 colorlinks=true,
 linkcolor=blue,
 citecolor=MidnightBlue,
 urlcolor=Violet,
 hypertexnames=false,
 ]
 {hyperref}

\usepackage{amsmath}
\usepackage{amssymb}
\usepackage{amsthm}
\usepackage{thmtools}
\usepackage{mathtools}
\usepackage{esint}
\usepackage{tikz-cd}

\usepackage[
 activate={true,nocompatibility},
 final,
 tracking=true,
 kerning=true,
 spacing=true,
 factor=1100,
 stretch=10,
 shrink=10
]{microtype}
\SetTracking{encoding={*}, shape=sc}{0} 
\microtypecontext{spacing=nonfrench}

\usepackage{lmodern}
\usepackage{mathrsfs} 
\usepackage[mathcal]{euscript} 

\usepackage[
 backend=biber,
 hyperref=true,
 url=false,
 isbn=false,
 eprint=true,
 backref=false,
 style=alphabetic,
 firstinits=true,
 sortcites=true
 ]
 {biblatex}

\numberwithin{equation}{section}
\numberwithin{figure}{section}

\declaretheorem[name=Theorem,style=plain,numberwithin=section]{thm}
\declaretheorem[name=Proposition,style=plain,sibling=thm]{prop}
\declaretheorem[name=Lemma,style=plain,sibling=thm]{lem}
\declaretheorem[name=Corollary,style=plain,sibling=thm]{cor}
\declaretheorem[name=Example,style=definition,sibling=thm,
]{example}
\declaretheorem[name=Definition,style=definition,sibling=thm]{defn}
\declaretheorem[name=Remark,style=remark,sibling=thm]{rem}

\usepackage{cleveref}
\crefname{thm}{Theorem}{Theorems}
\crefname{prop}{Proposition}{Proposition}
\crefname{lem}{Lemma}{Lemmas}
\crefname{cor}{Corollary}{Corollaries}
\crefname{example}{Example}{Examples}
\crefname{defn}{Definition}{Definitions}
\crefname{rem}{Remark}{Remarks}
\crefname{claim}{Claim}{Claims}
\crefname{enumi}{}{}
\crefname{enumii}{}{}
\crefname{enumiii}{}{}
\crefname{equation}{}{}

\usepackage{enumitem}
\newcommand{\enumlabelformat}{\roman}
\newcommand{\enumlabelfont}[1]{#1}
\newlength{\thelabelsep}
\setlength{\thelabelsep}{4pt}

\setlist{labelsep=\thelabelsep}
\setlist[enumerate]{font=\enumlabelfont,label=(\enumlabelformat*),leftmargin=2.5em}
\setlist[itemize]{leftmargin=2.5em,label=$-$}

\newcounter{inlineenum}
\renewcommand{\theinlineenum}{\enumlabelformat{inlineenum}}
\newenvironment{inlineenum}
  {\setcounter{inlineenum}{0}%
   \renewcommand{\item}{\refstepcounter{inlineenum}{(\theinlineenum)\hspace{\thelabelsep}}}
  }
  {\ignorespacesafterend}


\newcommand{\id}[1]{{\mathrm{id}_{#1}}} 
\newcommand{\img}[1]{{\mathrm{img}(#1)}} 
\renewcommand{\dim}[2][]{{\mathrm{dim}_{#1}(#2)}} 
\newcommand{\rank}[1]{{\mathrm{dim}\,\img{#1}}} 

\newcommand{\spec}[2][]{{\sigma_{#1}(#2)}} 
\newcommand{\essspec}[1]{{\sigma_e(#1)}} 
\newcommand{\dspec}[1]{{\sigma_d(#1)}} 
\newcommand{\dom}[1]{{\mathrm{dom}(#1)}} 
\newcommand{\hol}[1]{{\mathcal O(#1)}} 



\newcommand{\bergman}[1][2]{A^{#1}} 
\newcommand{\dbar}{{\smash{\overline{\partial}}}} 
\newcommand{\supp}{{\mathrm{supp}}} 


\newcommand{\htensor}{\mathbin{\hat\otimes}} 
\newcommand{\hgtensor}{\mathbin{\hat\circledast}} 
\newcommand{\gtensor}{\circledast} 
\newcommand{\getensor}{\circledast}


\newcommand{\minn}{s} 
\newcommand{\maxx}{w} 

\newcommand{\cohom}[2][]{{\mathcal H^{#1}(#2)}} 
\newcommand{\redcohom}[2][]{{\smash{\overline{\mathcal H}}^{#1}(#2)}} 
\newcommand{\ltwocohom}[2][]{{\mathcal H^{#1}_{L^2}(#2)}} 
\newcommand{\redltwocohom}[2][]{{\smash{\overline{\mathcal H}}^{#1}_{L^2}(#2)}} 


\newlength{\hilbcomparrow}
\settowidth{\hilbcomparrow}{\scriptsize$d_{i+1}^*$}
\newcommand*{\hilbcomprightarrow}[1]{\xrightarrow{\mathmakebox[\hilbcomparrow]{#1}}}
\newcommand*{\hilbcompleftarrow}[1]{\xleftarrow{\mathmakebox[\hilbcomparrow]{#1}}}

\renewcommand{\implies}{\,\Rightarrow\,}

\renewcommand{\iff}{\,\Leftrightarrow\,}

\addbibresource{./references.bib}

\begin{document}

\title[Essential spectra of tensor product Hilbert complexes]
 {Essential spectra of tensor product Hilbert complexes, and the \texorpdfstring{$\dbar$}{dbar}-Neumann problem on product manifolds}
\author{Franz Berger}
\address{Fakult\"at f\"ur Mathematik, Universit\"at Wien, Oskar-Morgenstern-Platz 1, A-1090 Wien, Austria}
\email{franz.berger2@univie.ac.at}
\thanks{This work was supported by the Austrian Science Fund (FWF): P23664.}

\begin{abstract}
 We investigate tensor products of Hilbert complexes, in particular the (essential) spectrum of their Laplacians.
 It is shown that the essential spectrum of the Laplacian associated to the tensor product complex is computable in terms of the spectra of the factors.
 Applications are given for the $\dbar$-Neumann problem on the product of two or more Hermitian manifolds, especially regarding (non-)~compactness of the associated $\dbar$-Neumann operator.
\end{abstract}

\keywords{Hilbert complexes, Laplacian, Essential spectrum, Elliptic complexes, $\dbar$-Neumann problem, product manifolds}
\subjclass[2010]{Primary 58J50; Secondary 35N15, 58J10, 47A13}

\maketitle
\tableofcontents

\section{Introduction}

By a \emph{Hilbert (cochain) complex} $(H,\mathcal D,d)$ (or simply $(H,d)$) we mean a graded Hilbert space $H = \bigoplus_{i \in \mathbb Z} H_i$ with only finitely many nonzero (mutually orthogonal) terms, a dense graded linear subspace $\mathcal D = \bigoplus_{i\in \mathbb Z} \mathcal D_i$, and a closed linear operator $d \colon \mathcal D \to \mathcal D$ on $H$ of degree $1$ such that $d \circ d = 0$.
We therefore obtain the (cochain) complex
$$ \cdots \hilbcomprightarrow{d_{i-2}} \mathcal D_{i-1} \hilbcomprightarrow{d_{i-1}} \mathcal D_i \hilbcomprightarrow{d_i} \mathcal D_{i+1} \hilbcomprightarrow{d_{i+1}} \cdots $$
with closed and densely defined differentials $d_i \coloneqq d|_{\mathcal D_i} \colon \mathcal D_i \to \mathcal D_{i+1}$.
Hilbert complexes were most prominently studied in \cite{Bruening1992}, but the concept also appears in some earlier works \cite{Vasilescu1980,Grosu1982}.
They are useful in order to formalize the main operator theoretic properties common to boundary value problems for elliptic complexes.
An important operator associated to $(H,d)$ is its \emph{Laplacian}, defined by $\Delta \coloneqq \bigoplus_{i\in \mathbb Z} \Delta_i$ with
$$ \Delta_i \coloneqq d_i^* d_i + d_{i-1}d_{i-1}^* \quad\text{on}\quad \dom{\Delta_i} \coloneqq \big\{x \in \mathcal D_i \cap \mathcal D_i^* : dx \in \mathcal D_{i+1}^* \text{ and } d^*x \in \mathcal D_{i-1}^*\big\}, $$
where $\mathcal D_i^* \subseteq H_i$ is the domain of $d_{i-1}^*$, the adjoint of $d_{i-1}$.
This gives the chain complex
$$ \cdots \hilbcompleftarrow{d_{i-2}^*} \mathcal D_{i-1}^* \hilbcompleftarrow{d_{i-1}^*} \mathcal D_i^* \hilbcompleftarrow{d_i^*} \mathcal D_{i+1}^* \hilbcompleftarrow{d_{i+1}^*} \cdots $$
Each $\Delta_i$ is a positive self-adjoint operator on $H_i$, and it is useful to study the Laplacian in order to gain insight into the solutions of the inhomogeneous $d$-equation.
The fact that the Laplacian is self-adjoint is usually attributed to Gaffney \cite{Gaffney1955}, where the corresponding result for the de~Rham complex is found.

In this article we are concerned with the spectral theory of the Laplacian of a \emph{tensor product} of two Hilbert complexes.
The Hilbert space of the tensor product of two Hilbert complexes $(H,d)$ and $(H',d')$ is given by the tensor product of graded Hilbert spaces, and the differential is the closure of $\bigoplus_{j+k=i}(d_j\otimes \id{H'_k} + \sigma_j\otimes d'_k)$, where $\sigma_j$ is multiplication by $(-1)^j$ on $H_j$, see \cref{sec:hilbert_complex_tensor_products} for the detailed definitions.
Our main result is the following:

\begin{thm}\label{intro_spectrum_and_essspec_of_product_laplacian}
 Let $(H,d)$ and $(H',d')$ be two Hilbert complexes, with Laplacians $\Delta$ and $\Delta'$, respectively.
 If $\widetilde\Delta$ denotes the Laplacian of the tensor product Hilbert complex $(H,d)\hgtensor (H',d')$, then
 \begin{equation}\label{eq:spectrum_of_product_laplacian}
  \spec{\widetilde\Delta_i} = \bigcup_{j+k=i} \big(\spec{\Delta_j} + \spec{\Delta'_k}\big)
 \end{equation}
 and
 \begin{equation}\label{eq:essential_spectrum_of_product_laplacian}
  \essspec{\widetilde\Delta_i} = \bigcup_{j+k=i} \big(\essspec{\Delta_j} + \spec{\Delta'_k}\big)\cup\big(\spec{\Delta_j} + \essspec{\Delta'_k}\big).
 \end{equation}
\end{thm}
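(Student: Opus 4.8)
The plan is to reduce everything to the classical spectral theory of \emph{tensor sums} of strongly commuting nonnegative self-adjoint operators. The structural backbone, which I would establish directly or extract from the definitions in \cref{sec:hilbert_complex_tensor_products}, is the identity
\[
 \widetilde\Delta_i = \bigoplus_{j+k=i}\bigl(\Delta_j\otimes \id{H'_k} + \id{H_j}\otimes \Delta'_k\bigr).
\]
To see it, write $P \coloneqq d\otimes\id{}$ and $Q \coloneqq \sigma\otimes d'$ on the algebraic tensor product, so the tensor-product differential is $P+Q$ before closures. Using $\sigma^2=\id{}$ and the anticommutation $d\sigma=-\sigma d$ (which is precisely the role of the sign twist $\sigma_j=(-1)^j$), one computes $P^2=Q^2=0$ and, crucially, that all mixed terms cancel: $PP^*+P^*P=\Delta\otimes\id{}$, $QQ^*+Q^*Q=\id{}\otimes\Delta'$, while $PQ^*+Q^*P=0$ and $QP^*+P^*Q=0$. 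Thus the Laplacian of $P+Q$ on the algebraic domain is $\Delta\otimes\id{}+\id{}\otimes\Delta'$; the only genuinely analytic point is that its closure coincides with $\widetilde\Delta$, which rests on essential self-adjointness of the tensor sum on the natural core. Since $\Delta,\Delta'$ preserve degree, this operator respects the bigrading and restricts on $H_j\otimes H'_k$ to $\Delta_j\otimes\id{}+\id{}\otimes\Delta'_k$, giving the decomposition.

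Granting this, each complex has only finitely many nonzero terms, so the direct sum is finite and $\spec{\widetilde\Delta_i}=\bigcup_{j+k=i}\spec{\Delta_j\otimes\id{}+\id{}\otimes\Delta'_k}$, likewise for $\essspec{\widetilde\Delta_i}$, with no closures required for finite unions. It then suffices to treat a single pair $A\coloneqq\Delta_j\geq0$, $B\coloneqq\Delta'_k\geq0$ and $S\coloneqq A\otimes\id{}+\id{}\otimes B$. Here $A\otimes\id{}$ and $\id{}\otimes B$ strongly commute, hence admit a joint spectral measure $E=E_A\otimes E_B$ supported on $\spec{A}\times\spec{B}$, with $S=\int(s+t)\,dE(s,t)$; consequently $\spec{S}=\overline{\spec{A}+\spec{B}}$. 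The reason the closure disappears in \eqref{eq:spectrum_of_product_laplacian} is positivity: if $a_n+b_n\to\lambda$ with $a_n\in\spec{A},b_n\in\spec{B}\subseteq[0,\infty)$, then both sequences are bounded, and passing to convergent subsequences together with closedness of $\spec{A},\spec{B}$ yields $\lambda\in\spec{A}+\spec{B}$. Thus $\spec{A}+\spec{B}$ is already closed, establishing \eqref{eq:spectrum_of_product_laplacian}.

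For the essential spectrum I would again argue through $E=E_A\otimes E_B$. The inclusion ``$\supseteq$'' in \eqref{eq:essential_spectrum_of_product_laplacian} is a Weyl sequence argument: if $\lambda=s+t$ with $s\in\essspec{A}$ and $t\in\spec{B}$, then for every $\varepsilon>0$ the projection $E_A((s-\varepsilon,s+\varepsilon))$ has infinite rank while $E_B((t-\varepsilon,t+\varepsilon))\neq0$, so their tensor product has infinite rank and lies in the spectral subspace of $S$ over $(\lambda-2\varepsilon,\lambda+2\varepsilon)$; letting $\varepsilon\to0$ gives $\lambda\in\essspec{S}$, and symmetrically for $s\in\spec{A}$, $t\in\essspec{B}$. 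The reverse inclusion is the main obstacle. I would prove the contrapositive: if $\lambda$ lies in neither $\essspec{A}+\spec{B}$ nor $\spec{A}+\essspec{B}$, then $E_S((\lambda-\varepsilon,\lambda+\varepsilon))$ has finite rank for small $\varepsilon$, so $\lambda\notin\essspec{S}$. The delicate part is a compactness argument along the segment $\{(s,\lambda-s):s\in[0,\lambda]\}$: one shows that each point of $\spec{A}\times\spec{B}$ on the line $s+t=\lambda$ pairs an isolated, finite-multiplicity eigenvalue of $A$ with one of $B$, that only finitely many such pairs occur, and that covering the segment by finitely many product boxes of finite rank bounds the rank of $E_S$ near $\lambda$. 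Making this finiteness and covering rigorous — essentially the nonnegative self-adjoint case of Ichinose's theorem on essential spectra of tensor products — is the technical heart of the proof; closedness of the two summands on the right of \eqref{eq:essential_spectrum_of_product_laplacian} again follows from the positivity argument above.
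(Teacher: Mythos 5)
Your proposal is correct and follows essentially the same route as the paper: the same cross\-/term cancellation plus essential self\-/adjointness of the tensor sum yields the decomposition of $\widetilde\Delta_i$ into closures of $\Delta_j\otimes\id{H'_k}+\id{H_j}\otimes\Delta'_k$ (\cref{laplacian_of_product_hilbert_complex}), and the spectra are then read off from the joint spectral measure $E_A\otimes E_B$, with positivity used to remove the closures. The one step you defer as the ``technical heart'' --- the inclusion $\essspec{A\otimes\id{}+\id{}\otimes B}\subseteq(\essspec{A}+\spec{B})\cup(\spec{A}+\essspec{B})$ --- is exactly what the paper's spectral mapping theorem for proper maps (\cref{spectral_mapping_theorem}, applied via \cref{joint_spectrum_of_operators_tensor_id,spectrum_of_T_plus_S}) supplies: the preimage of a small closed disc about $\lambda$ under $(s,t)\mapsto s+t$ is a compact subset of the joint \emph{discrete} spectrum and therefore carries a finite\-/rank spectral projection, which is precisely your finite covering by finite\-/rank product boxes made rigorous.
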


Here, $\spec{\widetilde\Delta_i}$ and $\essspec{\widetilde\Delta_i}$ are the spectrum and the essential spectrum of $\widetilde\Delta_i$, respectively, and we use Minkowski sums in order to add sets of real numbers.
In particular, the sum $\essspec{\Delta_j} + \spec{\Delta'_k}$ is meant to be empty if one of the summands is empty.
Equations~\cref{eq:spectrum_of_product_laplacian,eq:essential_spectrum_of_product_laplacian} are obtained by first showing that the Laplacian of the tensor product is an appropriate direct sum of the closures of $\Delta_j \otimes\id{H'_k} + \id{H_j}\otimes \Delta'_k$, and then computing the (essential spectrum) of these operators by using the Borel functional calculus for strongly commuting tuples of normal operators, see \cref{sec:joint_spectra}.

The results are motivated by questions arising in the $\dbar$-Neumann problem on Hermitian manifolds, which is essentially the study of the \emph{(Gaffney extension of the) complex Laplacian},
$$ \square^E \coloneqq \dbar^{E,*}\dbar^E + \dbar^E\dbar^{E,*}, $$
with $E \to X$ a Hermitian holomorphic vector bundle, $\dbar^E$ the (maximal closed extension of) the Dolbeault operator acting on $E$-valued differential forms, and $\dbar^{E,*}$ its Hilbert space adjoint with respect to the $L^2$ inner product induced by the metrics.
Since $\dbar^E$ maps $(p,q)$ forms to $(p,q+1)$ forms and squares to zero, we obtain, for every $1 \leq p \leq \dim[\mathbb C]{X}$, a Hilbert complex which we denote by $(L^2_{p,\bullet}(X,E),\dbar^E)$, with $L^2_{p,q}(X,E)$ being the space of square-integrable $(p,q)$ forms on $X$ with values in $E$.

The Cauchy-Riemann equations on product domains have been studied previously in \cite{Krantz1988,Fu2007,Ehsani2007,Chakrabarti2010,Chakrabarti2011}.
In \cite{Chakrabarti2010}, Chakrabarti computes the spectrum of $\square$ for $X \times Y$, the product of two Hermitian manifolds.
If we denote, for the moment, the complex Laplacian on the $(p,q)$ forms on $X \times Y$ by $\square^{X\times Y}_{p,q}$, then its spectrum according to \cite{Chakrabarti2010} is
\begin{equation*}\label{eq:spectrum_of_product_dbar_chakrabarti}
 \spec{\square^{X\times Y}_{p,q}} = \bigcup_{\substack{p'+p''=p \\ q'+q''=q}} \big(\spec{\square^X_{p',q'}} + \spec{\square^Y_{p'',q''}}\big).
\end{equation*}
One of our goals was to find a similar formula for the essential spectrum.
If we allow for bundle-valued forms, say $E \to X$ and $F \to Y$, then it turns out that the complex Laplacian for the bundle $E\boxtimes F \coloneqq \pi_X^* E \otimes \pi_Y^* F$ over $X \times Y$ is unitarily equivalent to the Laplacian of the tensor product of the Hilbert complexes $(L^2_{0,\bullet}(X,E),\dbar^E)$ and $(L^2_{0,\bullet}(Y,F),\dbar^F)$, so that we obtain
\begin{equation}\label{eq:intro_spectrum_of_product_dbar}
 \spec{\square^{E\boxtimes F}_{0,q}} = \bigcup_{q'+q''=q} \big(\spec{\square^E_{0,q'}} + \spec{\square^F_{0,q''}}\big)
\end{equation}
and
\begin{equation}\label{eq:intro_essential_spectrum_of_product_dbar}
 \essspec{\square^{E\boxtimes F}_{0,q}} = \bigcup_{q'+q''=q} \big(\essspec{\square^E_{0,q'}} + \spec{\square^F_{0,q''}}\big)\cup\big(\spec{\square^E_{0,q'}} + \essspec{\square^F_{0,q''}}\big)
\end{equation}
from \cref{eq:spectrum_of_product_laplacian,eq:essential_spectrum_of_product_laplacian}.
Both equations have their expected analogues for $(p,q)$ forms with $p \neq 0$, but this will require taking an additional direct sum, see \cref{essential_spectrum_of_product_dbar}.

We are also interested in questions regarding the compactness of minimal solution operators to the inhomogeneous $\dbar^E$-equation.
Closely related to this is compactness of the $\dbar$-Neumann operator, which is the inverse of $\square^E$ (modulo its kernel).
Whether the $\dbar$-Neumann operator is compact can be read off from the essential spectrum of $\square^E$, and \cref{eq:intro_essential_spectrum_of_product_dbar} therefore provides a way to decide compactness for product manifolds in terms of the corresponding property of the factors.

We point out that these above questions have already been investigated for certain special product manifolds.
As a standard counterexample, Krantz \cite{Krantz1988} shows that the minimal solution operator to the $\dbar$-equation for $(0,1)$-forms on the unit bidisc in $\mathbb C^2$ fails to be compact.

Haslinger and Helffer consider in \cite[Proposition~4.6]{Haslinger2007} the \emph{weighted} $\dbar$-problem on $\mathbb C^n$, which can be understood as the corresponding problem for the trivial line bundle on $\mathbb C^n$ with nontrivial fiber metric $e^{-\varphi/2}$ for some given smooth function $\varphi \colon \mathbb C^n \to \mathbb R$.
They show that if $\varphi$ is \emph{decoupled}, $\varphi(z) = \varphi_1(z_1) + \dotsb + \varphi_n(z_n)$, and there exist $1 \leq j \leq n$ such that the Bergman space of entire functions on $\mathbb C$, square integrable with respect to $e^{-\varphi_j}\lambda$ (with $\lambda$ the Lebesgue measure), has infinite dimension, then the $\dbar$-Neumann operator for the weighted problem on $\mathbb C^n$ is not compact on $(0,1)$ forms.
The question of whether the conclusion extends to higher degree forms was left unanswered.
Indeed, the method of proof seems unsuitable for treating anything but $(0,1)$ forms, since they basically consider a solution operator for the product complex which only agrees with the minimal one for $(0,1)$ forms, see the arguments in \cite{Chakrabarti2011}.
The deeper reason for this is that the kernel of $\dbar$ does not play nicely with respect to the product structure, while $L^2$ cohomology (the kernel of the Laplacian) does.
This is expressed in the K\"unneth formula (which holds more generally for tensor products of Hilbert complexes, see \cite{Grosu1982} or \cite[Corollary~2.15]{Bruening1992}).
Note that the weighted problem with decoupled weights is covered by our results since, geometrically, it corresponds to considering the line bundle $\bigotimes_{j=1}^n \pi_j^*E_j$ over $\mathbb C^n$, where $E_j$ is the trivial line bundle over $\mathbb C$ with fiber metric $e^{-\varphi_j/2}$, and $\pi_j \colon \mathbb C^n \to \mathbb C$ the projection onto the $j$th factor.

The extension of \cite[Proposition~4.6]{Haslinger2007} will then be \cref{dbar_neumann_noncompactness_riemann_surface_products}, where we show that the $\dbar$-Neumann operator for the product of $n$ Riemann surfaces (and vector bundles over them) is in fact not compact on $(0,q)$ forms with $0 \leq q \leq n-1$, provided at least one factor has an infinite dimensional Bergman space.

\subsubsection*{Acknowledgments}
This work is part of the author's Ph.D.\ research under the supervision of Prof.\ Friedrich Haslinger.
The author would like to thank Prof.\ Haslinger for making him aware of the above mentioned difficulties in the weighted $\dbar$-problem with decoupled weights, which ultimately led to the present article.

\section{Hilbert complexes}\label{sec:hilbert_complexes}

In this section we will review some of the basics of the theory of Hilbert complexes.
For a more in-depth introduction, see \cite{Bruening1992}.
In addition, we will supplement this by adding concepts and results which are standard in the $L^2$ theory of the $\dbar$-complex from several complex variables.

If $(H,\mathcal D,d)$ and $(H',\mathcal D',d')$ are Hilbert complexes, then a graded linear map $g \colon H \to H'$ (of degree $0$) is called a \emph{morphism of Hilbert complexes} if $g$ is bounded (i.e., $g_i \coloneqq g|_{H_i} \colon H_i \to H'_i$ is bounded for every $i \in \mathbb Z$) and $g \circ d \subseteq d' \circ g$.
In particular, $g(\mathcal D) \subseteq \mathcal D'$.
An isomorphism of Hilbert complexes is a bijective morphism of Hilbert complexes $g \colon H \to H'$ such that $g\circ d = d' \circ g$ (in the sense of unbounded operators; in particular, $g(\mathcal D) = \mathcal D'$).
A \emph{unitary equivalence} between Hilbert complexes is a unitary isomorphism of Hilbert complexes.
Note that if $g \colon (H,d) \to (H',d')$ is such a unitary equivalence, then $d^* \circ g^{-1} = g^{-1} \circ d'^*$ and hence $g \circ \Delta = \Delta' \circ g$, so that the Laplacians are unitarily equivalent.

If $\{(H^j,d^j) : j \in F\}$ is a finite collection of Hilbert complexes, then their \emph{direct sum} is the Hilbert complex $\bigoplus_{j \in F} (H^j,d^j) \coloneqq (\bigoplus_{j \in F} H^j, \bigoplus_{j \in F} d^j)$.
Evidently, its Laplacian is given by $\bigoplus_{j \in F} \Delta^j$, with $\Delta^j$ the Laplacian of $(H^j,d^j)$.

The \emph{cohomology} of a Hilbert cochain complex $(H,d)$ is the graded vector space
$$ \cohom{H,d} \coloneqq \bigoplus_{i\in \mathbb Z} \cohom[i]{H,d}, \quad\text{where}\quad \cohom[i]{H,d} \coloneqq \ker(d_i)\mathbin{\big/}\img{d_{i-1}}. $$
The \emph{reduced cohomology} of $(H,d)$ is
$$ \redcohom{H,d} \coloneqq \bigoplus_{i\in \mathbb Z} \redcohom[i]{H,d}, \quad\text{where}\quad \redcohom[i]{H,d} \coloneqq \ker(d_i)\mathbin{\big/}\overline{\img{d_{i-1}}}. $$
In general, the differentials of a Hilbert complex do not have closed range, so that typically only $\redcohom{H,d}$ will be a Hilbert space in a natural way.
One of the main tools available is the Hodge decomposition, see \cite[Lemma~2.1]{Bruening1992}:

\begin{prop}[Weak Hodge decomposition]
 Every Hilbert complex $(H,d)$ induces an orthogonal decomposition
 $$ H_i = \ker(\Delta_i) \oplus \overline{\img{d_{i-1}}} \oplus \overline{\img{d_i^*}}. \quad (i \in \mathbb Z) $$
 Moreover, the space of \emph{harmonic elements},
 $$ \ker(\Delta) = \bigoplus_{i \in \mathbb Z} \big(\ker(d_i) \cap \ker(d_{i-1}^*)\big), $$
 is canonically isomorphic to $\redcohom{H,d}$, in the sense that every equivalence class in $\redcohom{H,d}$ has a unique harmonic representative.
\end{prop}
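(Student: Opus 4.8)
The plan is to reduce everything to the elementary orthogonal decomposition $H = \ker(T)\oplus\overline{\img{T}^{*}}$, valid for any closed, densely defined operator $T$ between Hilbert spaces (here I write $\img{T}^*$ loosely for the image of the adjoint), together with a quadratic-form characterization of $\ker(\Delta_i)$. First I would apply this to the two closed operators landing in, respectively leaving, $H_i$. Viewing $d_{i-1}\colon \mathcal D_{i-1}\to H_i$ gives $H_i = \ker(d_{i-1}^*)\oplus\overline{\img{d_{i-1}}}$, and viewing $d_i\colon \mathcal D_i\to H_{i+1}$ gives $H_i = \ker(d_i)\oplus\overline{\img{d_i^*}}$. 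The relation $d\circ d = 0$ yields $\img{d_{i-1}}\subseteq\ker(d_i)$, and passing to adjoints $\img{d_i^*}\subseteq\ker(d_{i-1}^*)$; in particular the two closed subspaces $\overline{\img{d_{i-1}}}$ and $\overline{\img{d_i^*}}$ are mutually orthogonal, since the first sits inside $\ker(d_i) = \big(\overline{\img{d_i^*}}\big)^{\perp}$.

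The key step is to establish $\ker(\Delta_i) = \ker(d_i)\cap\ker(d_{i-1}^*)$. The inclusion $\supseteq$ is immediate: for such an $x$ all relevant images vanish, so $x$ trivially lies in $\dom{\Delta_i}$ and $\Delta_i x = 0$. For $\subseteq$ I would use the quadratic form: for $x\in\dom{\Delta_i}$ one has $\langle \Delta_i x, x\rangle = \|d_i x\|^2 + \|d_{i-1}^* x\|^2$, obtained by moving $d_i^*$ and $d_{i-1}$ across the inner product. This is exactly where the domain conditions built into $\dom{\Delta_i}$ are needed, so that $x\in\dom{d_i}$ with $d_i x\in\dom{d_i^*}$, and $x\in\dom{d_{i-1}^*}$ with $d_{i-1}^* x\in\dom{d_{i-1}}$; this domain bookkeeping, and the legitimacy of the form manipulation for the self-adjoint $\Delta_i$, is the main point requiring care. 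Once it is in place, $\Delta_i x = 0$ forces both summands to vanish, giving $x\in\ker(d_i)\cap\ker(d_{i-1}^*)$.

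Combining the two observations, $\ker(\Delta_i) = \ker(d_i)\cap\ker(d_{i-1}^*) = \big(\overline{\img{d_i^*}}\big)^{\perp}\cap\big(\overline{\img{d_{i-1}}}\big)^{\perp} = \big(\overline{\img{d_{i-1}}}\oplus\overline{\img{d_i^*}}\big)^{\perp}$, where the orthogonal sum is already closed thanks to the mutual orthogonality noted above. Taking orthogonal complements yields the asserted three-fold orthogonal decomposition $H_i = \ker(\Delta_i)\oplus\overline{\img{d_{i-1}}}\oplus\overline{\img{d_i^*}}$, and summing over $i$ identifies $\ker(\Delta)$ with $\bigoplus_{i}\big(\ker(d_i)\cap\ker(d_{i-1}^*)\big)$.

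Finally, for the harmonic description of reduced cohomology I would intersect the decomposition with $\ker(d_i)$: since $\ker(d_i)$ contains both $\ker(\Delta_i)$ and $\overline{\img{d_{i-1}}}$ and is orthogonal to $\overline{\img{d_i^*}}$, this gives $\ker(d_i) = \ker(\Delta_i)\oplus\overline{\img{d_{i-1}}}$. Hence the orthogonal projection onto $\ker(\Delta_i)$ descends to a canonical isomorphism $\redcohom[i]{H,d} = \ker(d_i)\big/\overline{\img{d_{i-1}}}\xrightarrow{\;\sim\;}\ker(\Delta_i)$, and each reduced cohomology class has its $\ker(\Delta_i)$-component as its unique harmonic representative. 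I expect no essential difficulty here beyond the domain verification in the quadratic-form identity.
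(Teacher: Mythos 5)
Your proof is correct, and it is the standard argument: the paper does not reproduce a proof but cites \cite[Lemma~2.1]{Bruening1992}, whose proof proceeds exactly as you do, via $H_i=\ker(d_{i-1}^*)\oplus\overline{\img{d_{i-1}}}=\ker(d_i)\oplus\overline{\img{d_i^*}}$ and the quadratic-form identity $\langle\Delta_i x,x\rangle=\|d_ix\|^2+\|d_{i-1}^*x\|^2$ on $\dom{\Delta_i}$. The domain bookkeeping you flag is indeed the only point requiring care, and you handle it correctly.
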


Let $P^d \colon H \to H$ denote the orthogonal projection of $H$ onto $\ker(d)$.
The \emph{minimal (or canonical) solution operator to $(H,d)$} is the closed operator
$$ S = S(H,d) \colon \img{d} \subseteq H \to H, \quad S(dx) \coloneqq (\id{H}-P^d)x. $$
This is well-defined since $\ker(d) = \ker(\id{H} - P^d)$.
We write $S_i = S_i(H,d) \colon \img{d_{i-1}} \to H_{i-1}$ for its restriction to $H_i$.
By definition, $S$ gives the norm-minimal solution to the inhomogeneous $d$-equation,
$$ d(Sy) = y \quad\text{and}\quad Sy \perp \ker(d) $$
for $y \in \img{d}$.

The remaining results of this section are well-known for the (closed extensions of the) Dolbeault complex on Hermitian manifolds.
As a (non-exhaustive) list of references, we cite \cite{Hoermander1965,Chen2001,Straube2010,Haslinger2014}.
For the convenience of the reader, we provide here the proofs of the corresponding results for Hilbert complexes.
Note that while most of those references do not allow for $\Delta_i$ to have a nontrivial kernel (since the complex Laplacian on bounded pseudoconvex domains is injective), this is easily incorporated into the arguments, see also \cite{Ovrelid2014,Ruppenthal2011}.

\begin{lem}\label{minmal_solution_operator_bounded_iff_diff_range_closed}
 Let $(H,d)$ be a Hilbert complex.
 Then $S_i \colon \img{d_{i-1}} \subseteq H_i \to H_{i-1}$ is bounded if and only if $d_{i-1}$ has closed range.
 In this case we extend $S_i$ to $H_i$ by zero on $\img{d_{i-1}}^\perp$.
\end{lem}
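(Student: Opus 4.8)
The plan is to identify $S_i$ with the inverse of an injective restriction of the closed operator $d_{i-1}$, and then to let the closed graph theorem do the work. Write $\tilde d \coloneqq d_{i-1}|_{\mathcal D_{i-1} \cap \ker(d_{i-1})^\perp}$. First I would verify that $S_i = \tilde d^{-1}$: given $x \in \mathcal D_{i-1}$, the element $x' \coloneqq (\id{H} - P^d)x$ again lies in $\mathcal D_{i-1}$ (since $P^d x \in \ker(d_{i-1}) \subseteq \mathcal D_{i-1}$), belongs to $\ker(d_{i-1})^\perp$, and satisfies $d_{i-1}x' = d_{i-1}x$ because $P^d x \in \ker(d_{i-1})$. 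As $S_i(d_{i-1}x) = x'$ by definition, and $\tilde d$ is injective with $\img{\tilde d} = \img{d_{i-1}}$, this shows $S_i = \tilde d^{-1}$ on $\img{d_{i-1}}$.

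The structural point is then that $S_i$ is a closed operator. This rests on two standard facts: the restriction of a closed operator to the intersection of its domain with a closed subspace is again closed, so $\tilde d$ is closed; and the inverse of a closed injective operator is closed (its graph being the image of the closed graph of $\tilde d$ under the isometric flip $(a,b)\mapsto(b,a)$). Hence $S_i$ is closed with $\dom{S_i} = \img{d_{i-1}}$. Given this, both implications are immediate. If $d_{i-1}$ has closed range, then $\img{d_{i-1}}$ is a Hilbert space and $S_i$ is an everywhere-defined closed operator on it, hence bounded by the closed graph theorem. Conversely, if $S_i$ is bounded, then for $y_n \to y$ in $\img{d_{i-1}}$ the images $S_i y_n$ converge, so $(y_n, S_i y_n)$ converges in the graph of $S_i$; closedness forces $y \in \dom{S_i} = \img{d_{i-1}}$, so the range is closed. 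In the closed-range case $\overline{\img{d_{i-1}}} = \img{d_{i-1}}$, so extending $S_i$ by zero on $\img{d_{i-1}}^\perp$ is merely a definitional convention.

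The only genuinely delicate step is the closedness of $S_i$, and this is where I would be most careful: the identification $S_i = \tilde d^{-1}$ must be checked domain-by-domain (in particular that $x'$ really lies in $\mathcal D_{i-1} \cap \ker(d_{i-1})^\perp$), and the preservation of closedness under restriction to a closed subspace and under inversion should be stated explicitly. Once $S_i$ is known to be a closed operator defined exactly on $\img{d_{i-1}}$, the equivalence with closed range is nothing but the bounded inverse theorem.
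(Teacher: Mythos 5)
Your proof is correct and follows essentially the same route as the paper: the forward direction is the closed graph theorem applied to the closed, everywhere-defined operator $S_i$ on the Hilbert space $\img{d_{i-1}}$, and the converse again exploits closedness of $S_i$ (the paper phrases it via the estimate $\|x\| \leq C\|d_{i-1}x\|$ on $\mathcal D_{i-1}\cap\ker(d_{i-1})^\perp$, which is the same mechanism). Your explicit verification that $S_i = \tilde d^{-1}$ is closed is a welcome detail that the paper absorbs into the definition of $S$ by simply declaring it a closed operator.
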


\begin{proof}
 If $d_{i-1}$ has closed range, then $S_i$ is a closed and everywhere defined operator on the Hilbert space $\img{d_{i-1}}$, hence bounded by the closed graph theorem.
 Conversely, if $S_i$ is bounded there exists $C > 0$ such that $\|S(dx)\| \leq C \|dx\|$ for all $x \in \mathcal D_{i-1}$.
 If $x \in \mathcal D_{i-1} \cap \ker(d_{i-1})^\perp$, then $S(dx) = x$ and hence
 $\|x\| = \|S(dx)\| \leq C \|dx\|$,
 which shows that $d_{i-1}$ has closed range.
\end{proof}

The next result shows that the minimal solution operator is closely related to the Laplacian:

\begin{prop}\label{inverse_of_laplacian_properties}
 Let $(H,d)$ be a Hilbert complex and define
 $$ N = N(H,d) \coloneqq \big(\Delta|_{\dom{\Delta} \cap \ker(\Delta)^\perp}\big)^{-1} \colon \img{\Delta} \to H $$
 as the inverse of the Laplacian.
 We write $N_i = N_i(H,d) \colon \img{\Delta_i} \to H_i$ for its restriction to $H_i$.
 Then:
 \begin{enumerate}
  \item\label{item:N_commutes_with_differentials} $dN=Nd$ on $\mathcal D \cap \img{\Delta}$ and $d^*N = Nd^*$ on $\mathcal D^* \cap \img{\Delta}$.
  \item On $\img{d} \cap \img{\Delta}$ we have
   \begin{equation}\label{eq:minimal_solution_operator_in_terms_of_N}
    S = d^* N.
   \end{equation}
  \item On $\mathcal D \cap d^{-1}(\img{\Delta})$ we have
   \begin{equation}\label{eq:projection_onto_kernel_in_terms_of_N}
    I - P^d = d^* N d.
   \end{equation}
 \end{enumerate}
\end{prop}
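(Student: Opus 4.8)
\emph{Proof plan.} The plan is to reduce all three identities to the interplay of $d$ and $d^*$ with $\Delta$ and its inverse $N$. I would first record two facts: (a) if $u\in\dom\Delta$ and $\Delta u\in\mathcal D$, then $du\in\dom\Delta$ with $\Delta(du)=d(\Delta u)$, and symmetrically, if $\Delta u\in\mathcal D^*$ then $d^*u\in\dom\Delta$ with $\Delta(d^*u)=d^*(\Delta u)$; and (b) $d$ maps $\dom d$ into $\ker(\Delta)^\perp$ and $d^*$ maps $\dom{d^*}$ into $\ker(\Delta)^\perp$. Fact (b) is immediate: for $v\in\dom d$ and $h\in\ker\Delta$ the weak Hodge decomposition gives $d^*h=0$, so $\langle dv,h\rangle=\langle v,d^*h\rangle=0$, and the $d^*$ statement is identical. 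For (a), the formal identity $d_i\Delta_i=d_id_i^*d_i=\Delta_{i+1}d_i$ follows from $d\circ d=0$, and the hypothesis $\Delta u\in\mathcal D$ is exactly what makes the domain check succeed: then $d_i^*d_iu=\Delta_iu-d_{i-1}d_{i-1}^*u\in\mathcal D_i$, because $d_{i-1}d_{i-1}^*u\in\img{d_{i-1}}\subseteq\ker d_i\subseteq\mathcal D_i$, which places $d_iu$ in $\dom{\Delta_{i+1}}$; the $d^*$ half follows by symmetry under interchanging $d$ and $d^*$, an operation leaving $\Delta$ unchanged.

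Granting (a) and (b), part (i) is direct. For $x\in\mathcal D\cap\img\Delta$ put $u=Nx\in\dom\Delta\cap\ker(\Delta)^\perp$, so $\Delta u=x\in\mathcal D$; by (a), $du\in\dom\Delta$ and $\Delta(du)=d(\Delta u)=dx$, while (b) gives $du\in\ker(\Delta)^\perp$. Hence $dx=\Delta(du)$ with $du\in\dom\Delta\cap\ker(\Delta)^\perp$, and applying $N$, which inverts $\Delta$ there, yields $N(dx)=du=d(Nx)$. This is $dN=Nd$ on $\mathcal D\cap\img\Delta$, and the symmetric argument gives $d^*N=Nd^*$ on $\mathcal D^*\cap\img\Delta$.

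For part (ii) I would invoke the weak Hodge decomposition $H=\ker\Delta\oplus\overline{\img d}\oplus\overline{\img{d^*}}$. Take $y\in\img d\cap\img\Delta$ and set $u=Ny$, so $y=\Delta u=d^*du+dd^*u$ with $d^*du\in\overline{\img{d^*}}$ and $dd^*u\in\overline{\img d}$. As $y\in\img d\subseteq\overline{\img d}$ has vanishing component in $\overline{\img{d^*}}$, whereas the $\overline{\img{d^*}}$-component of the right-hand side is exactly $d^*du$, we conclude $d^*du=0$ and hence $y=d(d^*u)$. Thus $d^*u$ solves $dw=y$, and since $d^*u\in\img{d^*}\subseteq\overline{\img{d^*}}=\ker(d)^\perp$ it is orthogonal to $\ker d$, hence the norm-minimal solution; therefore $Sy=d^*u=d^*Ny$, which is $S=d^*N$ on $\img d\cap\img\Delta$.

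Part (iii) is then immediate: if $x\in\mathcal D$ with $dx\in\img\Delta$, then $dx\in\img d\cap\img\Delta$, so part (ii) gives $d^*N(dx)=S(dx)$, and $S(dx)=(I-P^d)x$ by the definition of the minimal solution operator. The main obstacle is the domain bookkeeping behind fact (a): the commutation with $\Delta$ is formally trivial, but verifying membership in $\dom\Delta$ genuinely uses the hypothesis $\Delta u\in\mathcal D$, since arbitrary elements of $\dom\Delta$ need not be carried into $\dom\Delta$ by $d$, and it is precisely this restriction that matches the domains appearing in the statement. Once (a) is secured, parts (ii) and (iii) are soft consequences of the Hodge decomposition and the defining property of $S$.
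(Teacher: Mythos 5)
Your proof is correct and follows essentially the same route as the paper: the same domain verification showing $d(Nx)\in\dom{\Delta}$ for part (i), the same use of the weak Hodge decomposition to kill the $\overline{\img{d^*}}$-component in part (ii), and part (iii) as an immediate consequence of the definition of $S$. The only cosmetic difference is that you isolate the commutation $\Delta(du)=d(\Delta u)$ as a separate lemma and then apply $N$, whereas the paper unwinds the same computation inline.
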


\begin{proof}
 If $x \in \mathcal D_i \cap \img{\Delta_i}$, then $x = \Delta_i y$ for some $y \in \dom{\Delta_i} \cap \ker(\Delta_i)^\perp$.
 It follows that $d_iy \in \dom{\Delta_{i+1}}$ and
 $$ Nd_ix = Nd_i\Delta_i y = N d_i d_i^* d_i y = N(d_{i+1}^* d_{i+1} + d_id_i^*)d_i y = d_i y = d_i Nx. $$
 This shows the first equation in~\cref{item:N_commutes_with_differentials}, the other one follows similarly.
 If $x \in \img{d_{i-1}} \cap \img{\Delta_i}$, then
 $$ x = \Delta_i N_i x = d_i^*d_i N_i x + d_{i-1}d_{i-1}^* N_i x. $$
 Because $x \in \ker(d_i)$ and $d_i \circ d_{i-1} = 0$, this implies $d_i^*d_iN_ix \in \ker(d_i) \cap \img{d_i^*} = 0$.
 Therefore, $x = d_{i-1}d_{i-1}^* N_i x$ and
 $$ S_i x = (I-P^d)d_{i-1}^*N_ix = d_{i-1}^* N_i x $$
 since $\overline{\img{d_{i-1}^*}} = \img{I-P^d} \cap H_{i-1}$.
 This shows \cref{eq:minimal_solution_operator_in_terms_of_N}, and \cref{eq:projection_onto_kernel_in_terms_of_N} is immediate from the definition of $S$.
\end{proof}

\begin{lem}\label{equivalent_statements_to_N_bounded}
 Let $(H,d)$ be a Hilbert complex.
 Then the following are equivalent:
 \begin{enumerate}
  \item\label{item:equivalent_statements_to_N_bounded_N_i_bounded}
   $N_i \colon \img{\Delta_i} \to H_i$ is bounded.
  \item\label{item:equivalent_statements_to_N_bounded_Delta_closed_range}
   $\Delta_i$ has closed range.
  \item\label{item:equivalent_statements_to_N_bounded_diff_closed_range}
   $d_{i-1}$ and $d_i$ both have closed range.
  \item\label{item:equivalent_statements_to_N_bounded_basic_estimate}
   There is $C>0$ such that, for all $x \in \mathcal D_i \cap \mathcal D_i^* \cap \ker(\Delta_i)^\perp$,
   $$ \|x\|^2 \leq C\big(\|d_i x\|^2 + \|d_{i-1}^* x\|^2\big). $$
  \item\label{item:equivalent_statements_to_N_bounded_S_closed_range}
   $S_i \colon \img{d_{i-1}} \to H_{i-1}$ and $S_{i+1} \colon \img{d_i} \to H_i$ are both bounded.
 \end{enumerate}
 In this case, we extend $N_i$ by zero on $\img{\Delta_i}^\perp = \ker(\Delta_i)$.
\end{lem}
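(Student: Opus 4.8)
The plan is to dispose of two equivalences immediately and then run a short cycle through the rest. Since \cref{minmal_solution_operator_bounded_iff_diff_range_closed} says that $S_i$ is bounded exactly when $d_{i-1}$ has closed range, and (applied one degree higher) that $S_{i+1}$ is bounded exactly when $d_i$ has closed range, statements~(iii) and~(v) are literally the same assertion, so I would record the equivalence (iii)$\Leftrightarrow$(v) at once. Statement (i)$\Leftrightarrow$(ii) is then proved just as in \cref{minmal_solution_operator_bounded_iff_diff_range_closed}: if $\Delta_i$ has closed range, then $N_i$ is a closed operator defined on all of the Hilbert space $\img{\Delta_i}$ and hence bounded by the closed graph theorem; conversely, boundedness of $N_i$ gives $\|x\| = \|N_i\Delta_i x\| \leq \|N_i\|\,\|\Delta_i x\|$ for every $x \in \dom{\Delta_i}\cap\ker(\Delta_i)^\perp$, and such a coercive estimate forces $\img{\Delta_i}$ to be closed. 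It then remains to close the loop (iii)$\Rightarrow$(iv)$\Rightarrow$(ii) and (ii)$\Rightarrow$(iii).

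For (iii)$\Rightarrow$(iv) I would use the weak Hodge decomposition. Given $x \in \mathcal D_i\cap\mathcal D_i^*\cap\ker(\Delta_i)^\perp$, split it orthogonally as $x = u + v$ with $u \in \overline{\img{d_{i-1}}}$ and $v \in \overline{\img{d_i^*}}$. Since $\overline{\img{d_{i-1}}}\subseteq\ker(d_i)$ and $\overline{\img{d_i^*}}\subseteq\ker(d_{i-1}^*)$, one checks that $u$ and $v$ lie in the relevant domains, that $d_i u = 0$ and $d_{i-1}^* v = 0$, and hence that $d_i v = d_i x$ and $d_{i-1}^* u = d_{i-1}^* x$. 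Because $d_{i-1}$ has closed range iff $d_{i-1}^*$ does, the hypothesis furnishes constants with $\|v\| \leq C_1\|d_i v\|$ on $\mathcal D_i\cap\ker(d_i)^\perp$ and $\|u\|\leq C_2\|d_{i-1}^* u\|$ on $\mathcal D_i^*\cap\ker(d_{i-1}^*)^\perp$; using $u \perp \ker(d_{i-1}^*)$ and $v \perp \ker(d_i)$, the Pythagorean identity $\|x\|^2 = \|u\|^2 + \|v\|^2$ yields the basic estimate~(iv). The implication (iv)$\Rightarrow$(ii) is the shortest step: for $x \in \dom{\Delta_i}\cap\ker(\Delta_i)^\perp$ one has $\langle\Delta_i x, x\rangle = \|d_i x\|^2 + \|d_{i-1}^* x\|^2$, so~(iv) together with Cauchy--Schwarz gives $\|x\|^2 \leq C\langle \Delta_i x, x\rangle \leq C\|\Delta_i x\|\,\|x\|$, i.e.\ the coercive bound $\|x\|\leq C\|\Delta_i x\|$ that makes $\img{\Delta_i}$ closed.

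The remaining, and genuinely delicate, direction is (ii)$\Rightarrow$(iii), namely recovering closed range of \emph{both} $d_{i-1}$ and $d_i$ from closed range of the single operator $\Delta_i$. The subtle point is that closedness of $\img{d_i}$ is ``shared'' between $\Delta_i$ and $\Delta_{i+1}$, so it must be detected inside degree $i$ through the $d_i^* d_i$ summand of $\Delta_i$ rather than through $N_{i+1}$. Here I would exploit \cref{inverse_of_laplacian_properties}: for $y \in \img{d_{i-1}}\cap\img{\Delta_i}$ one has $\Delta_i N_i y = d_{i-1}d_{i-1}^* N_i y = y$ and $S_i y = d_{i-1}^* N_i y$, whence $\|S_i y\|^2 = \langle d_{i-1}d_{i-1}^* N_i y, N_i y\rangle = \langle y, N_i y\rangle \leq \|N_i\|\,\|y\|^2$; as $\img{d_{i-1}}\cap\img{\Delta_i}$ contains the dense range of $d_{i-1}d_{i-1}^*$ inside $\overline{\img{d_{i-1}}}$, this bounds $S_i$ and gives closed range of $d_{i-1}$. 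Symmetrically, for $w \in \img{d_i^*}\cap\img{\Delta_i}$ the orthogonality of the Hodge summands forces $d_{i-1}d_{i-1}^* N_i w = 0$ and $d_i^* d_i N_i w = w$, so $d_i N_i w$ is the norm-minimal solution of $d_i^* z = w$ with $\|d_i N_i w\|^2 = \langle w, N_i w\rangle \leq \|N_i\|\,\|w\|^2$; boundedness of this minimal solution operator, for the chain complex $(H,d^*)$ to which \cref{minmal_solution_operator_bounded_iff_diff_range_closed} applies equally, yields closed range of $d_i^*$, equivalently of $d_i$.

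I expect this last step to be the main obstacle. One must keep precise track of which Hodge component each vector occupies so that the cross terms $d_i^* d_i$ and $d_{i-1}d_{i-1}^*$ do not interfere, and one must verify the domain and density conditions ensuring that the estimates established on the cores $\img{d_{i-1}}\cap\img{\Delta_i}$ and $\img{d_i^*}\cap\img{\Delta_i}$ indeed propagate to global bounds on the (closed) operators $S_i$ and the dual minimal solution operator. Everything else is a routine combination of the weak Hodge decomposition, the closed range theorem ($T$ has closed range iff $T^*$ does), and \cref{inverse_of_laplacian_properties}.
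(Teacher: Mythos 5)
Your proof is correct and follows the same overall cycle as the paper's --- (i)$\iff$(ii), then (ii)$\implies$(iii)$\implies$(iv)$\implies$(ii), and (iii)$\iff$(v) via \cref{minmal_solution_operator_bounded_iff_diff_range_closed} --- and your arguments for (i)$\iff$(ii), (iii)$\implies$(iv) and (iv)$\implies$(ii) coincide with the paper's. The one place you genuinely diverge is (ii)$\implies$(iii): the paper proves the closed-range inequality $\|x\|\leq C\|d_ix\|$ directly for $x\in\mathcal D_i\cap\ker(d_i)^\perp\subseteq\img{\Delta_i}$, writing $\|x\|^2=\langle\Delta_iN_ix,x\rangle=\langle d_iN_ix,d_ix\rangle$ (the $d_{i-1}d_{i-1}^*N_ix$ term being orthogonal to $x$) and using the bound $\|d_iN_iy\|^2+\|d_{i-1}^*N_iy\|^2=\langle y,N_iy\rangle$, then interchanges the roles of $d_i$ and $d_{i-1}^*$; you instead bound the canonical solution operator $S_i$ and its analogue for the adjoint chain complex and feed these back into \cref{minmal_solution_operator_bounded_iff_diff_range_closed}. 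The two routes are dual to one another and rest on exactly the same identity and the same Hodge orthogonality, so nothing essential is gained or lost; yours costs the extra (true, and worth stating explicitly) observation that \cref{minmal_solution_operator_bounded_iff_diff_range_closed} applies verbatim to $(H,d^*)$. The ``density'' worry you flag at the end is vacuous: once $\Delta_i$ has closed range, $\img{d_{i-1}}\subseteq\overline{\img{d_{i-1}}}\subseteq\ker(\Delta_i)^\perp=\img{\Delta_i}$, so your estimate $\|S_iy\|^2=\langle y,N_iy\rangle\leq\|N_i\|\,\|y\|^2$ already holds on all of $\img{d_{i-1}}$ (and likewise on all of $\img{d_i^*}$ for the dual operator), not merely on a dense subset.
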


\begin{proof}
 Because $N_i$ is closed, \cref{item:equivalent_statements_to_N_bounded_Delta_closed_range} $\implies$ \cref{item:equivalent_statements_to_N_bounded_N_i_bounded}.
 Conversely, suppose $N_i$ is bounded and take $u_j \to u$ with $u_j \in \img{\Delta_i}$.
 Then $N_i u_j \to v$ for some $v \in H$ and we have $\Delta_i N_i u_j = u_j$.
 As $\Delta_i$ is closed, $v \in \dom{\Delta_i}$ and $\Delta_i v = u$, hence $u \in \img{\Delta_i}$.
 Thus, \cref{item:equivalent_statements_to_N_bounded_N_i_bounded} $\iff$ \cref{item:equivalent_statements_to_N_bounded_Delta_closed_range}.
 
 We now show \cref{item:equivalent_statements_to_N_bounded_Delta_closed_range} $\implies$ \cref{item:equivalent_statements_to_N_bounded_diff_closed_range}, so assume that $\Delta_i$ has closed range.
 For $x \in \mathcal D_i \cap \ker(d_i)^\perp \subseteq \ker(\Delta_i)^\perp = \img{\Delta_i}$, we have
 $$ \|x\|^2 = \langle \Delta_i N_i x,x \rangle = \langle d_i^*d_i N_i x,x \rangle + \langle d_{i-1}d_{i-1}^* N_i x, x \rangle = \langle d_i N_i x, d_i x \rangle \leq C \|x\| \|d_i x\| $$
 because $d_{i-1}d_{i-1}^* N_i x \in \ker(d_i) \perp x$, and the operators $d_i N_i$ and $d_{i-1}^* N_i$ are bounded on $\img{\Delta_i}$ since
 $$ \|d_i N_i y\|^2 + \|d_{i-1}^* N_i y\|^2 = \langle \Delta_i N_i y,N_i y \rangle = \langle y, N_i y\rangle \quad (y \in \img{\Delta_i}) $$
 and $N_i$ is bounded by \cref{item:equivalent_statements_to_N_bounded_N_i_bounded}.
 Therefore, $d_i$ has closed range.
 Interchanging the roles of $d_i$ and $d_{i-1}^*$, one shows that the latter operator also has closed range.
 
 Now assume that $d_{i-1}$ and $d_i$ have closed range. 
 It follows that $d_i^*$ also has closed range. 
 If $x \in \mathcal D_i \cap \mathcal D_i^* \cap \ker(\Delta_i)^\perp = \mathcal D_i \cap \mathcal D_i^* \cap (\img{d_{i-1}} \oplus \img{d_i^*})$, write $x = x_1 + x_2$ with $x_1 \in \mathcal D_i \cap \mathcal D_i^* \cap \img{d_{i-1}}$ and $x_2 \in \mathcal D_i \cap \mathcal D_i^* \cap \img{d_i^*}$.
 There exist $C_1,C_2>0$ such that
 $$ \|x_1\|^2 \leq C_1 \|d_{i-1}^*x_1\|^2 = C_1 \|d_{i-1}^* x\|^2 \quad\text{and}\quad \|x_2\|^2 \leq C_2 \|d_i x_2\|^2 = C_2 \|d_i x\|^2 $$
 by our assumptions on $d_{i-1}$ and $d_i$, and hence
 $$ \|x\|^2 = \|x_1\|^2 + \|x_2\|^2 \leq C\big(\|d_{i-1}^*x\|^2 +\|d_i x\|^2\big) $$
 with $C \coloneqq \max\{C_1,C_2\}$.
 This shows \cref{item:equivalent_statements_to_N_bounded_diff_closed_range} $\implies$ \cref{item:equivalent_statements_to_N_bounded_basic_estimate}, and \cref{item:equivalent_statements_to_N_bounded_basic_estimate} $\implies$ \cref{item:equivalent_statements_to_N_bounded_Delta_closed_range} is immediate as $\dom{\Delta_i} \subseteq \mathcal D_i \cap \mathcal D_i^*$ and $\|d_ix\|^2 + \|d_{i-1}^* x\|^2 = \langle \Delta_i x,x \rangle \leq \|\Delta_i x\| \|x\|$ for $x \in \dom{\Delta_i}$.
 The equivalence \cref{item:equivalent_statements_to_N_bounded_diff_closed_range} $\iff$ \cref{item:equivalent_statements_to_N_bounded_S_closed_range} follows from \cref{minmal_solution_operator_bounded_iff_diff_range_closed}.
\end{proof}

\begin{prop}\label{inverse_of_laplacian_properties_bounded}
 Let $(H,d)$ be a Hilbert complex, and assume that any of the equivalent statements of \cref{equivalent_statements_to_N_bounded} holds.
 We extend $N_i$ by zero on $\img{\Delta_i}^\perp$.
 Then
 \begin{enumerate}
  \item\label{item:N_commutes_with_differentials_2}
   $dN = Nd$ on $\mathcal D_i$ and $d^*N = Nd^*$ on $\mathcal D_i^*$,
  \item\label{item:S_in_terms_of_N}
   $S_i = d^*N_i$ on $H_i$,
  \item\label{item:N_in_terms_of_S}
   $N_i = S_i^* S_i + S_{i+1}S_{i+1}^*$, where $S_i$ and $S_{i+1}$ are the extensions by zero,
  \item\label{item:N_bounded_zero_in_spectrum}
   $\inf{\spec{\Delta_i}} > 0$ if and only if $\ker(\Delta_i) = 0$, and
  \item\label{item:N_bounded_zero_in_essential_spectrum}
   $\inf{\essspec{\Delta_i}} > 0$ if and only if $\dim{\ker(\Delta_i)} < \infty$.
 \end{enumerate}
\end{prop}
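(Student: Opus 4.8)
The plan is to bootstrap the three identities of \cref{inverse_of_laplacian_properties}, which were proved only on the dense subspaces meeting $\img{\Delta}$, to the global statements \cref{item:N_commutes_with_differentials_2,item:S_in_terms_of_N}, exploiting that under the hypothesis of \cref{equivalent_statements_to_N_bounded} the ranges $\img{d_{i-1}}$, $\img{d_i^*}$ and $\img{\Delta_i}$ are all closed. First I would split an arbitrary $x \in \mathcal D_i$ (respectively $x \in \mathcal D_i^*$, or $x \in H_i$) along the weak Hodge decomposition $H_i = \ker(\Delta_i) \oplus \img{d_{i-1}} \oplus \img{d_i^*}$. On $\ker(\Delta_i) = \ker(d_i) \cap \ker(d_{i-1}^*)$ both sides of each identity vanish, because $N_i$ is extended by zero there while $d_i$ and $d_{i-1}^*$ annihilate it; the two remaining summands lie in $\img{\Delta_i}$, where \cref{inverse_of_laplacian_properties} applies verbatim. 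This immediately gives \cref{item:N_commutes_with_differentials_2}. For \cref{item:S_in_terms_of_N} the only extra point is that $d_{i-1}^* N_i$ must vanish on $\img{d_i^*}$, matching $S_i = 0$ there: for $x \in \img{d_i^*} \subseteq \ker(d_{i-1}^*)$ the $\img{d_{i-1}}$-component in $x = \Delta_i N_i x = d_i^* d_i N_i x + d_{i-1} d_{i-1}^* N_i x$ must be zero, whence $d_{i-1} d_{i-1}^* N_i x = 0$ and therefore $d_{i-1}^* N_i x = 0$ by injectivity of $d_{i-1}$ on $\ker(d_{i-1})^\perp$.

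For \cref{item:N_in_terms_of_S} I would start from \cref{item:S_in_terms_of_N}. Since $N_i$ is bounded and self-adjoint and $d_{i-1}$ is closed, the pairing $\langle S_i x, w \rangle = \langle x, N_i d_{i-1} w \rangle$ for $w \in \mathcal D_{i-1}$ identifies the bounded adjoint as $S_i^* = N_i d_{i-1}$ on $\mathcal D_{i-1}$. Because $S_i$ is the minimal solution operator (bounded here by \cref{equivalent_statements_to_N_bounded}), one has $S_i x \in \mathcal D_{i-1}$ with $d_{i-1} S_i x = P_{\img{d_{i-1}}} x$ for every $x \in H_i$, where $P_{\img{d_{i-1}}}$ is the orthogonal projection onto $\img{d_{i-1}}$; hence $S_i^* S_i = N_i P_{\img{d_{i-1}}}$. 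A symmetric computation, using the commutation relations from \cref{item:N_commutes_with_differentials_2} together with $d_i S_{i+1} = P_{\img{d_i}}$, gives $S_{i+1} S_{i+1}^* = N_i P_{\img{d_i^*}}$. Adding the two and using $P_{\img{d_{i-1}}} + P_{\img{d_i^*}} = P_{\img{\Delta_i}}$ and $N_i P_{\img{\Delta_i}} = N_i$ yields \cref{item:N_in_terms_of_S}. I expect the domain bookkeeping hidden in the identification of $S_i^*$ and $S_{i+1}^*$ to be the most delicate part of the whole proposition: it is handled by carrying out every composition on the dense subspace $\img{\Delta_i}$, on which $N_i$ maps into $\dom{\Delta_i}$ so that all products are legitimate, and then extending by continuity, the extensions agreeing automatically because every operator involved is bounded.

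Finally, \cref{item:N_bounded_zero_in_spectrum,item:N_bounded_zero_in_essential_spectrum} are spectral consequences of the closedness of $\img{\Delta_i}$. As $\Delta_i$ is positive and self-adjoint with $\img{\Delta_i} = \ker(\Delta_i)^\perp$ closed and inverse $N_i$ bounded, the spectrum of its restriction to $\img{\Delta_i}$ lies in $[\,\|N_i\|^{-1},\infty)$, so $\spec{\Delta_i} \cap (0,\|N_i\|^{-1}) = \emptyset$ and $0$ is an isolated point of $\spec{\Delta_i}$ whenever it belongs to it. For \cref{item:N_bounded_zero_in_spectrum}, $\inf \spec{\Delta_i} > 0$ is equivalent to $0 \notin \spec{\Delta_i}$, which---given closed range---amounts to injectivity of $\Delta_i$, i.e.\ $\ker(\Delta_i) = 0$. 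For \cref{item:N_bounded_zero_in_essential_spectrum}, the spectral gap forces $0 \in \essspec{\Delta_i}$ to mean precisely that $0$ is an eigenvalue of infinite multiplicity, i.e.\ $\dim{\ker(\Delta_i)} = \infty$; equivalently, $0 \notin \essspec{\Delta_i}$ says that $\Delta_i$ is Fredholm, and since $\Delta_i$ is self-adjoint with closed range its cokernel is isomorphic to its kernel, so Fredholmness is equivalent to $\dim{\ker(\Delta_i)} < \infty$. Taking contrapositives gives the stated equivalence $\inf \essspec{\Delta_i} > 0 \iff \dim{\ker(\Delta_i)} < \infty$.
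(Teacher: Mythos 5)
Your proof is correct and follows essentially the same route as the paper: reduce everything to $\img{\Delta_i}$ via the Hodge decomposition, where \cref{inverse_of_laplacian_properties} applies, note that all operators vanish on $\ker(\Delta_i)$ where $N_i$ is extended by zero, and read off \cref{item:N_bounded_zero_in_spectrum,item:N_bounded_zero_in_essential_spectrum} from the decomposition $\Delta_i \cong 0 \oplus \Delta_i|_{\img{\Delta_i}}$ together with the spectral gap coming from boundedness of $N_i$. The only cosmetic differences are that you kill $d_{i-1}^*N_i$ on $\img{d_i^*}$ by orthogonality of $\img{d_{i-1}}$ and $\img{d_i^*}$ plus injectivity of $d_{i-1}$ on $\ker(d_{i-1})^\perp$, where the paper uses the commutation $d^*Nd_i^*y = d^*d_i^*Ny = 0$, and that you assemble \cref{item:N_in_terms_of_S} from $S_i^*S_i = N_i P_{\img{d_{i-1}}}$ and $S_{i+1}S_{i+1}^* = N_i P_{\img{d_i^*}}$ rather than expanding $N_i = N_i\Delta_iN_i$ directly.
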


\begin{proof}
 On $\mathcal D_i \cap \img{\Delta_i}^\perp \subseteq \ker(d_i)$ we have $dN = 0$ and $Nd = 0$, so $dN = Nd$ holds on $\mathcal D_i$, and similarly one shows $d^*N=Nd^*$ on $\mathcal D_i^*$.
 
 By the Hodge decomposition, $\img{\Delta_i} = \img{d_{i-1}} \oplus \img{d_i^*}$.
 Thus, $\img{d_{i-1}} \subseteq \img{\Delta_i}$ and hence $S = d^*N$ on $\img{d_{i-1}}$.
 Since $S|_{\img{d_{i-1}}^\perp} = 0$ by definition, it remains to show that $d^*N$ also vanishes on $\img{d_{i-1}}^\perp$.
 As $\img{d_{i-1}}^\perp = \ker(\Delta_i) \oplus \img{d_i^*}$ and $N|_{\ker(\Delta_i)} = 0$, we are left with showing that $d^*N|_{\img{d_i^*}} = 0$.
 Now if $y \in \mathcal D_{i+1}^* = \dom{d_i^*}$, then $d^* N d_i^* y = d^* d_i^* N y = 0$ by \cref{item:N_commutes_with_differentials_2}.
 This shows that $S = d^*N$ on $H_i$.
 
 We have $d_{i-1}^*N_i x \in \mathcal D_{i-1}$ and $d_iN_i x \in \mathcal D_{i+1}^*$ for $x \in H_i$, and therefore
 $$ N_i x = N_i \Delta_i N_i x = (N_i d_{i-1})(d_{i-1}^*N_i)x + (N_i d_i^*)(d_i N_i)x = (d_{i-1}N_i)(d_{i-1}^*N_i)x + (d_i^* N_i)(d_iN_i)x $$
 by \cref{item:N_commutes_with_differentials_2}.
 Applying $S_i^* = d_{i-1} N_{i-1}$ and $S_{i+1}^* = d_i N_i$ shows \cref{item:N_in_terms_of_S}.

 The orthogonal decomposition $H_i = \ker(\Delta_i) \oplus \img{\Delta_i}$ induces a unitary equivalence of $\Delta_i$ with
 $$ 0 \oplus \Delta_i|_{\img{\Delta_i}} \colon \ker(\Delta_i) \oplus \img{\Delta_i} \to \ker(\Delta_i) \oplus \img{\Delta_i}, $$
 hence $\spec{\Delta_i}\setminus\{0\} = \spec{\Delta_i|_{\img{\Delta_i}}}$ and $\essspec{\Delta_i}\setminus\{0\} = \essspec{\Delta_i|_{\img{\Delta_i}}}$ since $0 \not\in \spec{\Delta_i|_{\img{\Delta_i}}}$ by the boundedness of $N_i|_{\img{\Delta_i}}$.
 Moreover, $0 \in \spec{\Delta_i}$ (resp.\ $0 \in \essspec{\Delta_i}$) if and only if $\ker(\Delta_i) \neq 0$ (resp.\ $\dim{\ker(\Delta_i)} = \infty$).
 This immediately gives \cref{item:N_bounded_zero_in_spectrum,item:N_bounded_zero_in_essential_spectrum}.
\end{proof}

\begin{rem}
 Concerning items~\cref{item:N_bounded_zero_in_spectrum,item:N_bounded_zero_in_essential_spectrum}, one even has that $\inf{\spec{\Delta_i}} > 0$ (resp.\ $\inf{\essspec{\Delta_i}} > 0$) if and only if the conditions of \cref{equivalent_statements_to_N_bounded} are satisfied and $\ker(\Delta_i) = 0$ (resp.\ $\dim{\ker(\Delta_i)} < \infty$).
 This is Proposition~2.2 (resp.\ Proposition~2.3) of \cite{Fu2010}.
\end{rem}

We are interested in determining whether $N$ and $S$ are compact operators.
Recall that the \emph{essential spectrum} $\essspec{T}$ of a (say normal) operator $T$ is the set of complex numbers which are accumulation points of the spectrum or eigenvalues of infinite multiplicity.
We refer to \cref{sec:joint_spectra} for the precise definition and more information on $\essspec{T}$.

\begin{prop}\label{characterization_of_compactness_of_S}
 Let $(H,d)$ be a Hilbert complex and assume that $i \in \mathbb Z$ is such that any of the equivalent statements of \cref{equivalent_statements_to_N_bounded} holds.
 Then the following are equivalent:
 \begin{enumerate}
  \item $N_i \colon H_i \to H_i$ is compact.
  \item $S_i \colon H_i \to H_{i-1}$ and $S_{i+1} \colon H_{i+1} \to H_i$ are both compact.
  \item $\essspec{\Delta_i} \subseteq \{0\}$.
 \end{enumerate}
\end{prop}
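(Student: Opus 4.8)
The plan is to reduce everything to the two identities recorded in \cref{inverse_of_laplacian_properties_bounded}, namely $N_i = S_i^* S_i + S_{i+1}S_{i+1}^*$ (item \cref{item:N_in_terms_of_S}) and $S_i = d^* N_i$, together with the orthogonal splitting $H_i = \ker(\Delta_i) \oplus \img{\Delta_i}$ already used there. I would first settle the equivalence of statements (i) and (ii). For the direction (ii) $\implies$ (i) the identity $N_i = S_i^* S_i + S_{i+1}S_{i+1}^*$ does all the work: compactness of $S_i$ and $S_{i+1}$ passes to their adjoints, and since the compact operators form a two-sided ideal that is closed under adjoints and finite sums, $N_i$ is compact.

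For (i) $\implies$ (ii) I would read the same identity as the pair of operator inequalities $S_i^* S_i \le N_i$ and $S_{i+1}S_{i+1}^* \le N_i$ between positive operators, which amount to $\|S_i x\| \le \|N_i^{1/2} x\|$ for all $x \in H_i$ and $\|S_{i+1}^* y\| \le \|N_i^{1/2} y\|$ for all $y \in H_i$. By Douglas's majorization lemma each of $S_i$ and $S_{i+1}^*$ then factors as a bounded operator composed with $N_i^{1/2}$; since $N_i$ is compact and positive, $N_i^{1/2}$ is compact, and hence so are $S_i$ and $S_{i+1}^*$, and therefore $S_{i+1}$ as well.

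Next I would treat (i) $\iff$ (iii). With respect to $H_i = \ker(\Delta_i) \oplus \img{\Delta_i}$ the operator $N_i$ is $0 \oplus A^{-1}$, where $A \coloneqq \Delta_i|_{\img{\Delta_i}}$ is self-adjoint and, by boundedness of $N_i$, satisfies $A \ge c$ for some $c > 0$. Thus $N_i$ is compact if and only if $A^{-1}$ is compact, and for a self-adjoint operator that is bounded below by a positive constant this holds exactly when $A$ has empty essential spectrum (spectral theorem together with the spectral mapping $\essspec{A^{-1}}\setminus\{0\} = \{1/\lambda : \lambda \in \essspec{A}\}$). Since $\essspec{A} = \essspec{\Delta_i} \setminus \{0\}$, as already observed in the proof of \cref{inverse_of_laplacian_properties_bounded}, this is precisely the condition $\essspec{\Delta_i} \subseteq \{0\}$.

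The routine inputs here are the ideal properties of the compact operators and the spectral fact that a bounded-below self-adjoint $A$ has compact inverse iff $\essspec{A} = \emptyset$. The one genuine point of care is the majorization step in (i) $\implies$ (ii): the tempting shortcut is to write $S_i = d^* N_i$ (item \cref{item:S_in_terms_of_N}) and argue that a bounded operator times a compact one is compact, but $d^*$ is unbounded, so this does not apply directly. The clean route is instead the domination $S_i^* S_i \le N_i$ combined with Douglas's lemma — equivalently, the fact that a positive operator dominated by a compact positive operator is itself compact — and arranging that inequality in exactly this form is the main thing to get right.
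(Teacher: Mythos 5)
Your proof is correct and follows essentially the same route as the paper: the identity $N_i = S_i^*S_i + S_{i+1}S_{i+1}^*$ for one direction, the domination $S_i^*S_i \le N_i$, $S_{i+1}S_{i+1}^*\le N_i$ for the converse, and the splitting $H_i = \ker(\Delta_i)\oplus\img{\Delta_i}$ for the spectral characterization. The only cosmetic difference is that you invoke Douglas's majorization lemma where the paper proves directly (via weak null sequences and $\|A^{1/2}x\|\le\|B^{1/2}x\|$) that a positive operator dominated by a compact positive operator is compact; both are standard and your warning against the shortcut $S_i = d^*N_i$ with unbounded $d^*$ is well taken.
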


\begin{proof}
 If $S_i$ and $S_{i+1}$ are compact (in particular: bounded), then \cref{item:N_in_terms_of_S} of \cref{inverse_of_laplacian_properties_bounded} shows that $N_i$ is also compact.
 Conversely, $S_i$ and $S_{i+1}$ are compact as soon as $N_i$ is since both $S_i^* S_i$ and $S_{i+1}S_{i+1}^*$ are positive operators.
 Indeed, if $A$ and $B$ are bounded positive operators on a Hilbert space $K$ with $A \leq B$ and $B$ compact, then the compact operator $B^{1/2}$ satisfies
 $$ \|A^{1/2}x\|^2 = \langle A x,x \rangle \leq \langle B x,x \rangle = \|B^{1/2}x\|^2 $$
 for every $x \in K$.
 Since $B^{1/2}x_j \to 0$ in $K$ for every weak null sequence $x_j$, we see that $A^{1/2}$ is compact, and hence $A$ is also compact.
 Now apply this to $S_i^*S_i \leq N_i$ and $S_{i+1}S_{i+1}^* \leq N_i$.

 We know that $\essspec{\Delta_i} \setminus \{0\} = \essspec{\Delta_i|_{\img{\Delta_i}}}$, see the proof of item~\cref{item:N_bounded_zero_in_essential_spectrum} of \cref{inverse_of_laplacian_properties_bounded}.
 But $N_i$ is compact if and only if $N_i|_{\img{\Delta_i}}$ is, and this is the case if and only if $\Delta_i|_{\img{\Delta_i}}$ has compact resolvent, i.e., $\essspec{\Delta_i|_{\img{\Delta_i}}} = \emptyset$. 
\end{proof}

\section{Tensor products of Hilbert complexes}\label{sec:hilbert_complex_tensor_products}

Having dealt with the basics on Hilbert complexes, we can now move on to their tensor products.
If $H$ and $K$ are Hilbert spaces, then we denote by $H\htensor K$ their Hilbert space tensor product, which is the completion of the algebraic tensor product $H\otimes K$ with respect to the usual inner product.
We require a few basic facts about the tensor product of unbounded operators, see \cite[7.5]{Schmuedgen2012} for a reference.
If $T$ and $S$ are closable linear operators on $H$ and $K$, respectively, then the induced operators $T \otimes S$ and $T \otimes \id{H} + \id{K} \otimes S$ on $\dom{T} \otimes \dom{S} \subseteq H \htensor K$ are closable.
We denote the closure of $T\otimes S$ by $T\htensor S$.
If both $T$ and $S$ are densely defined and closable, then $(T \htensor S)^* = T^* \htensor S^*$.

For two $\mathbb Z$-graded vector spaces $A = \bigoplus_{i \in \mathbb Z} A_i$ and $B = \bigoplus_{i \in \mathbb Z} B_i$, we denote by $A\gtensor B$ their \emph{graded tensor product}, which is the graded vector space
\begin{equation}\label{eq:graded_vector_space_tensor_product}
 A \gtensor B = \bigoplus_{i \in \mathbb Z} (A \gtensor B)_i \quad\text{with}\quad (A\gtensor B)_i \coloneqq \bigoplus_{j+k=i} A_j \otimes B_k.
\end{equation}
If $H$ and $K$ are $\mathbb Z$-graded Hilbert spaces, and if only finitely many $H_i$ and $K_i$ are nonzero, then we write $H\hgtensor K$ for the \emph{tensor product of graded Hilbert spaces},
$$ H \hgtensor K = \bigoplus_{i \in \mathbb Z} (H \hgtensor K)_i \quad\text{with}\quad (H\hgtensor K)_i \coloneqq \bigoplus_{j+k=i} H_j \htensor K_k. $$
If $A_i$ with $i \in \mathbb Z$ is a sequence of vector spaces, then by $A_\bullet$ we mean the graded vector space $\bigoplus_{i \in \mathbb Z} A_i$.
In the case where $A_i$ is only defined for a subset of $\mathbb Z$, we extend this sequence by zero.
We use the same convention for (finitely many) Hilbert spaces, graded vector bundles and sequences of linear operators.
Finally, the tensor product of Hilbert complexes is defined as in \cite{Bruening1992}:

\begin{defn}\label{def:product_hilbert_complex}
 Given two Hilbert complexes $(H,\mathcal D,d)$ and $(H',\mathcal D',d')$, their \emph{tensor product} complex $(H \hgtensor H', d\hgtensor d')$ is given by the tensor product of graded Hilbert spaces and
 $(d\hgtensor d')_i$ is the closure of
 \begin{equation}\label{eq:product_hilbert_complex_differential}
  \bigoplus_{j+k=i} \big(d_j \otimes \id{H'_k} + \sigma_j \otimes d'_k\big) \colon (\mathcal D\gtensor \mathcal D')_i \to (\mathcal D\gtensor \mathcal D')_{i+1},
 \end{equation}
 where
 $\sigma_j \colon H_j \to H_j$ is the multiplication by $(-1)^j$.
 It is straightforward to verify that this again defines a Hilbert complex.
 Note that the domain of $d\hgtensor d'$ is, in general, strictly larger than $\mathcal D \gtensor \mathcal D'$.
 We denote this tensor product complex by $(H,d) \hgtensor (H',d') \coloneqq (H\hgtensor H', d\hgtensor d')$.
\end{defn}

\begin{prop}\label{laplacian_of_product_hilbert_complex}
 Let $(H,d)$ and $(H',d')$ be two Hilbert complexes, $\Delta$ and $\Delta'$ their respective Laplacians.
 \begin{enumerate}
  \item\label{item:laplacian_of_product_hilbert_complex}
   The Laplacian of $(H,d)\hgtensor(H',d')$ on $(H\hgtensor H')_i$ is the closure of
   \begin{equation}\label{eq:laplacian_of_product_hilbert_complex}
    \bigoplus_{j+k=i} \big(\Delta_j \otimes \id{H'_k} + \id{H_j}\otimes \Delta'_k\big) \colon (\dom{\Delta} \gtensor \dom{\Delta'})_i \to (H \hgtensor H')_i.
   \end{equation}
  \item\label{item:laplacian_of_product_hilbert_complex_closed_range}
   If both $d$ and $d'$ have closed range, then so does $d\hgtensor d'$.
 \end{enumerate}
\end{prop}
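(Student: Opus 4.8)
The plan is to prove part~\cref{item:laplacian_of_product_hilbert_complex} by an explicit computation of $\widetilde\Delta$ on a convenient core, followed by an essential self-adjointness argument that promotes this to an identity of operators; part~\cref{item:laplacian_of_product_hilbert_complex_closed_range} will then follow from a spectral gap estimate. Throughout write $D$ for the differential $(d\hgtensor d')_i$, the closure of $D_0 := \bigoplus_{j+k=i}(d_j\otimes\id{H'_k} + \sigma_j\otimes d'_k)$. First I would identify enough of $D^* = D_0^*$: a direct inner-product computation shows that the formal adjoint $D_0^\dagger := \bigoplus_{j+k=i}(d_j^*\otimes\id{H'_k} + \sigma_j\otimes d'^*_k)$, defined on $\mathcal D^*\gtensor\mathcal D'^*$, satisfies $D_0^\dagger\subseteq D^*$, where the self-adjointness of $\sigma_j$ is used. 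Since I only ever need $D^*$ on the image of the core under $D$, this inclusion is all that is required.

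Next I would carry out the core computation on $\mathcal E := \dom{\Delta}\gtensor\dom{\Delta'}$. For a generator $x\otimes y$ with $x\in\dom{\Delta_j}$ and $y\in\dom{\Delta'_k}$, the defining conditions of $\dom{\Delta_j}$ and $\dom{\Delta'_k}$ (that $d_j x\in\mathcal D^*_{j+1}$, $d'_k y\in\mathcal D'^*_{k+1}$, and so on) guarantee that $D(x\otimes y)$ and $D^*(x\otimes y)$ again land in $\mathcal D^*\gtensor\mathcal D'^*$ and $\mathcal D\gtensor\mathcal D'$ respectively, so that $x\otimes y\in\dom{D^*D}\cap\dom{DD^*}$ and both $D$ and $D^*$ act by the explicit signed formulas. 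Expanding $\widetilde\Delta = D^*D + DD^*$ on $x\otimes y$, the diagonal terms assemble to $\Delta_j x\otimes y + x\otimes\Delta'_k y$, while the four cross terms cancel in pairs. This cancellation is precisely what the grading sign $\sigma_j$ is designed to achieve, as it makes $d$ and $d^*$ anticommute with the grading; it is the one genuinely computational input. Hence $\widetilde\Delta$ restricts on $\mathcal E$ to the algebraic operator $\bigoplus_{j+k=i}(\Delta_j\otimes\id{H'_k} + \id{H_j}\otimes\Delta'_k)$.

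To pass from the core to the asserted identity, I would invoke the standard theory of tensor products of self-adjoint operators. The operators $\Delta_j\otimes\id{H'_k}$ and $\id{H_j}\otimes\Delta'_k$ are positive, self-adjoint, and strongly commute, since their spectral projections are $E_{\Delta_j}(\cdot)\otimes\id{H'_k}$ and $\id{H_j}\otimes E_{\Delta'_k}(\cdot)$; consequently their sum is essentially self-adjoint on $\dom{\Delta_j}\otimes\dom{\Delta'_k}$, and the finite direct sum over $j+k=i$ is essentially self-adjoint on $\mathcal E$. Denote its self-adjoint closure by $A$. As the Laplacian of a Hilbert complex, $\widetilde\Delta$ is itself self-adjoint, and being closed it contains $A$ by the previous step. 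Two self-adjoint operators with $A\subseteq\widetilde\Delta$ must coincide, because taking adjoints reverses the inclusion; thus $\widetilde\Delta = A$ with core $\mathcal E$, which is the claim. I expect this core step to be the main obstacle: just as $\dom D$ is strictly larger than $\mathcal D\gtensor\mathcal D'$, a priori $\dom{\widetilde\Delta}$ is strictly larger than $\mathcal E$, and it is the essential self-adjointness of the tensor sum, not the algebra, that rules out a larger self-adjoint extension.

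For part~\cref{item:laplacian_of_product_hilbert_complex_closed_range}, if $d$ and $d'$ have closed range then each $\Delta_j$ and $\Delta'_k$ has closed range by \cref{equivalent_statements_to_N_bounded}, so there are uniform gaps $\spec{\Delta_j}\subseteq\{0\}\cup[\epsilon,\infty)$ and $\spec{\Delta'_k}\subseteq\{0\}\cup[\epsilon',\infty)$, the uniformity coming from there being only finitely many nonzero degrees. Decomposing $H_j = \ker(\Delta_j)\oplus\ker(\Delta_j)^\perp$ and likewise $H'_k$ yields, for each $(j,k)$, an orthogonal splitting of $H_j\htensor H'_k$ into four blocks that reduce both $\Delta_j\otimes\id{H'_k}$ and $\id{H_j}\otimes\Delta'_k$, hence reduce $\widetilde\Delta_i$ by part~\cref{item:laplacian_of_product_hilbert_complex}. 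One checks $\ker(\widetilde\Delta_i) = \bigoplus_{j+k=i}\ker(\Delta_j)\htensor\ker(\Delta'_k)$, while on each of the remaining three blocks positivity of the two summands together with the gaps gives $\langle\widetilde\Delta_i u, u\rangle\geq\min(\epsilon,\epsilon')\|u\|^2$. Therefore $\inf(\spec{\widetilde\Delta_i}\setminus\{0\})>0$, so $\widetilde\Delta_i$ has closed range for every $i$; applying \cref{equivalent_statements_to_N_bounded} once more, this time to the tensor product complex, shows that every $(d\hgtensor d')_i$ — and hence $d\hgtensor d'$ — has closed range. This part is routine once part~\cref{item:laplacian_of_product_hilbert_complex} is in hand; the only point to verify with care is that the four-block decomposition genuinely reduces the closed operator $\widetilde\Delta_i$ so that the quadratic-form bound transfers to a spectral gap.
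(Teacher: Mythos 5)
Your proof of part~(i) is correct and follows essentially the same route as the paper: identify the formal adjoint inside $(d\hgtensor d')^*$, verify that on $\dom{\Delta}\gtensor\dom{\Delta'}$ the cross terms cancel thanks to $\sigma_{j+1}d_j=-d_j\sigma_j$, and then use essential self-adjointness of $T\otimes\id{K}+\id{H}\otimes S$ (the paper cites \cite[Theorem~VIII.33]{Reed1980}; you derive it from strong commutativity of the two tensor factors, which is equivalent) to conclude that the self-adjoint operator $\widetilde\Delta_i$, being an extension of an essentially self-adjoint operator, equals its closure. Where you genuinely diverge is part~(ii): the paper does not prove this claim at all but defers to \cite[Corollary~2.15]{Bruening1992} and \cite[Theorem~4.5]{Chakrabarti2011}, whereas you supply a self-contained argument — closed range of $d$ and $d'$ gives uniform spectral gaps for the finitely many $\Delta_j$ and $\Delta'_k$, the four-block decomposition of each $H_j\htensor H'_k$ reduces $\widetilde\Delta_i$ by part~(i), the kernel is $\bigoplus_{j+k=i}\ker(\Delta_j)\htensor\ker(\Delta'_k)$, and the quadratic-form bound $\geq\min(\epsilon,\epsilon')$ on the complementary blocks yields $\inf(\spec{\widetilde\Delta_i}\setminus\{0\})>0$, whence closed range of the differentials via \cref{equivalent_statements_to_N_bounded}. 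This is sound (and carefully avoids circularity with \cref{intro_spectrum_and_essspec_of_product_laplacian}, which depends on the present proposition); it buys a proof that stays entirely within the paper's own toolkit at the cost of a page of spectral bookkeeping, and as a byproduct it reproves the K\"unneth identification of harmonic spaces. The one step you rightly flag — that the form bound on the core transfers to a spectral bound on each reducing block — is standard but should be said explicitly if written out in full.
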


\begin{proof}
 By general principles, $(d\hgtensor d')^*$ is the adjoint of the operator \cref{eq:product_hilbert_complex_differential}.
 It follows that
 $$ (d\hgtensor d')^*_i \supseteq
     \bigoplus_{j+k=i} \big(d_j^* \otimes \id{H'_k} + \sigma_j \otimes d'^*_k\big). $$
 If $\widetilde\Delta$ denotes the Laplacian of the tensor product complex, then this gives
 \begin{multline*}
  \widetilde\Delta_i = (d \hgtensor d')_i^* (d\hgtensor d')_i + (d\hgtensor d')_{i-1}(d\hgtensor d')_{i-1}^* \supseteq \\
   \supseteq \bigg(\bigoplus_{j+k=i} \big(d_j^* \otimes \id{H'_k} + \sigma_j \otimes d'^*_k\big)\bigg)\bigg(\bigoplus_{j+k=i} \big(d_j \otimes \id{H'_k} + \sigma_j \otimes d'_k\big)\bigg) + \\
   + \bigg(\bigoplus_{j+k=i-1} \big(d_j \otimes \id{H'_k} + \sigma_j \otimes d'_k\big)\bigg)\bigg(\bigoplus_{j+k=i-1} \big(d_j^* \otimes \id{H'_k} + \sigma_j \otimes d'^*_k\big)\bigg)
 \end{multline*}
 and the latter operator is an extension of
 $$ \bigoplus_{j+k=i} \big(\Delta_j \otimes \id{H'_k} + \id{H_j}\otimes \Delta'_k + (d^*_{j-1}\sigma_j)\otimes d'_k + (\sigma_{j+1} d_j) \otimes d'^*_{k-1} + (d_j\sigma_j)\otimes d'^*_{k-1} + (\sigma_{j-1} d^*_{j-1}) \otimes d'_k\big). $$
 Since $\sigma_{j+1} d_j = - d_j\sigma_j$ and $\sigma_{j-1}d^*_{j-1} = - d^*_{j-1}\sigma_j$, the cross terms vanish, and because the domain of the whole $i$th component is $\dom{\Delta_j}\otimes \dom{\Delta'_k}$, the whole expression is equal to the operator \cref{eq:laplacian_of_product_hilbert_complex} with domain $\bigoplus_{j+k=i} \dom{\Delta_j} \otimes \dom{\Delta'_k}$.
 It is a general fact that for self-adjoint operators $T$ and $S$ on Hilbert spaces $H$ and $K$, respectively, the operator $T \otimes \id{K} + \id{H} \otimes S$ is essentially self-adjoint, see \cite[Theorem~VIII.33]{Reed1980}.
 By the above, $\widetilde \Delta_i$ is a self-adjoint extension of \cref{eq:laplacian_of_product_hilbert_complex} and must therefore equal its closure.
 This shows \cref{item:laplacian_of_product_hilbert_complex}.
 For the proof of \cref{item:laplacian_of_product_hilbert_complex_closed_range} we refer to \cite[Corollary~2.15]{Bruening1992} or \cite[Theorem~4.5]{Chakrabarti2011}.
\end{proof}

Using \cref{laplacian_of_product_hilbert_complex} and the results on the spectra of the (closures of the) operators $\Delta_j \otimes \id{H'_k} + \id{H_j} \otimes \Delta'_k$ from \cref{sec:joint_spectra}, we are now able to show \cref{intro_spectrum_and_essspec_of_product_laplacian}:


\begin{proof}[Proof of \cref{intro_spectrum_and_essspec_of_product_laplacian}]
 The spectrum of the direct sum of finitely many self-adjoint operators decomposes as the union of the spectra of the individual operators, and the same holds for the essential spectrum.
 Now \cref{eq:spectrum_of_product_laplacian,eq:essential_spectrum_of_product_laplacian} follow from \cref{laplacian_of_product_hilbert_complex,spectrum_of_T_plus_S}.
\end{proof}

\begin{rem}
 Due to our choice of having Hilbert complexes $\mathbb Z$-graded and with $H_i = 0$ for $|i|$ large, it may appear at first glance that there are contributions of many ``zero'' operators in \cref{eq:spectrum_of_product_laplacian} and \cref{eq:essential_spectrum_of_product_laplacian}, simply by choosing $j$ and $k$ large enough and with opposite sign.
 This is not an issue since those zero operators act on the zero Hilbert space, so they are invertible and therefore have \emph{empty} spectrum (and not $\{0\}$!).
 In fact, in \cref{eq:spectrum_of_product_laplacian,eq:essential_spectrum_of_product_laplacian}, only the terms with $j \in \supp(H,d)$ and $k \in \supp(H',d')$ contribute, where the \emph{support} of a Hilbert complex $(H,d)$ is the finite set
 $$ \supp(H,d) \coloneqq \{i \in \mathbb Z : H_i \neq 0\} = \{i \in \mathbb Z : \mathcal D_i \neq 0. \}. $$
 Evidently, $\supp((H,d)\hgtensor (H',d')) = \supp(H,d) + \supp(H',d')$.
\end{rem}

We next give a characterization for the compactness of $N$ for the tensor product complex by using formula \cref{eq:essential_spectrum_of_product_laplacian}.
This characterization is simpler and more insightful if the Hilbert complexes are nondegenerate in the following sense:

\begin{defn}\label{def:nondegenerate_hilbert_complex}
 A Hilbert complex $(H,d)$ will be called \emph{nondegenerate} if $d_{i-1} \neq 0$ or $d_i \neq 0$ for all $i \in \supp(H,d)$.
\end{defn}

\begin{lem}\label{hilbert_complex_nondegeneracy}
 Let $(H,d)$ be a nondegenerate Hilbert complex.
 Then $\spec{\Delta_i} \not\subseteq\{0\}$ for all $i \in \supp(H,d)$, i.e., $\spec{\Delta_i}$ is not empty and also not the singleton $\{0\}$.
\end{lem}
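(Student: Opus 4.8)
The plan is to prove the contrapositive of the nontrivial half: I will show that if $\spec{\Delta_i} \subseteq \{0\}$ for some $i \in \supp(H,d)$, then both $d_{i-1} = 0$ and $d_i = 0$, contradicting nondegeneracy at the index $i$. First, since $i \in \supp(H,d)$ means $H_i \neq 0$ and $\Delta_i$ is a self-adjoint operator on the nonzero Hilbert space $H_i$, its spectrum is automatically nonempty; this already disposes of the possibility $\spec{\Delta_i} = \emptyset$. Thus the only case left to exclude is $\spec{\Delta_i} = \{0\}$.

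The key reduction is that $\spec{\Delta_i} = \{0\}$ forces $\Delta_i = 0$. Indeed, a positive self-adjoint operator with bounded spectrum is bounded, and for a bounded self-adjoint operator the operator norm coincides with the spectral radius, so $\spec{\Delta_i} = \{0\}$ gives $\|\Delta_i\| = 0$, i.e.\ $\Delta_i = 0$ (equivalently, one integrates $\lambda$ against a spectral measure supported at the origin). In particular $\dom{\Delta_i} = H_i$, and since by definition $\dom{\Delta_i} \subseteq \mathcal D_i \cap \mathcal D_i^*$, I conclude $\mathcal D_i = \mathcal D_i^* = H_i$, so that $d_i$ and $d_{i-1}^*$ are defined on all of $H_i$.

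It then remains to read off the vanishing of the differentials from the quadratic form of $\Delta_i$. For every $x \in H_i = \dom{\Delta_i}$ one has $d_i x \in \dom{d_i^*}$ and $d_{i-1}^* x \in \dom{d_{i-1}}$, hence
$$ 0 = \langle \Delta_i x, x\rangle = \langle d_i^* d_i x, x\rangle + \langle d_{i-1} d_{i-1}^* x, x\rangle = \|d_i x\|^2 + \|d_{i-1}^* x\|^2. $$
Therefore $d_i x = 0$ and $d_{i-1}^* x = 0$ for all $x \in H_i$, so $d_i = 0$ and $d_{i-1}^* = 0$; taking adjoints in the latter yields $d_{i-1} = (d_{i-1}^*)^* = 0$. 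This contradicts the nondegeneracy assumption, which demands $d_{i-1} \neq 0$ or $d_i \neq 0$ at the index $i \in \supp(H,d)$. (One could equally invoke the identity $\ker(\Delta_i) = \ker(d_i) \cap \ker(d_{i-1}^*)$ from the Weak Hodge decomposition: $\Delta_i = 0$ gives $\ker(d_i) \cap \ker(d_{i-1}^*) = H_i$, forcing both kernels to exhaust $H_i$.)

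There is essentially no hard step here; the only point requiring care is the domain bookkeeping for the unbounded operators, namely that the boundedness of $\Delta_i$ (coming from the boundedness of its spectrum) propagates to $\mathcal D_i = \mathcal D_i^* = H_i$ and thereby legitimizes the form identity used above.
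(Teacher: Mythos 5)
Your proof is correct and follows essentially the same route as the paper's: both arguments reduce $\spec{\Delta_i}\subseteq\{0\}$ to $\Delta_i=0$ via the spectral theorem, deduce $d_i=0$ and $d_{i-1}=(d_{i-1}^*)^*=0$ (you via the quadratic form identity, the paper via $\ker(\Delta_i)=\ker(d_i)\cap\ker(d_{i-1}^*)$, which you also note), and then contradict nondegeneracy, with nonemptiness of the spectrum coming from $H_i\neq 0$. Your domain bookkeeping is careful and valid; there is nothing to object to.
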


\begin{proof}
 Let $i \in \supp(H,d)$.
 We have $\Delta_i = 0$ if and only if $d_i = 0$ and $d_{i-1}^* = 0$.
 Indeed, if $\Delta_i = 0$, then $\dom{\Delta_i} = \ker(\Delta_i) = H_i$ and $\ker(\Delta_i) = \ker(d_i) \cap \ker(d_{i-1}^*)$, so $d_i = 0$ and $d_{i-1}^* = 0$.
 The other implication is obvious.
 Since the differentials are densely defined and closed, this is equivalent to $d_i = 0$ and $d_{i-1} = 0$.
 But if $d_i=d_{i-1} = 0$, then $\mathcal D_i = 0$ by our non-degeneracy assumption, a contradiction to $i \in \supp(H,d)$.
 Therefore, $\Delta_i \neq 0$.
 Since $H_i \neq 0$, we have $\spec{\Delta_i} \neq \emptyset$.
 If $\spec{\Delta_i} = \{0\}$, then $\supp(P_i) = \{0\}$ with $P_i$ the spectral measure associated to $\Delta_i$ as in the spectral theorem, and hence $\Delta_i = \int_{\{0\}} \id{\mathbb R}\,dP_i = 0$, a contradiction.
 It follows that $\spec{\Delta_i} \neq \{0\}$.
\end{proof}

\begin{thm}\label{characterization_of_compactness_of_S_for_product_complex}
 Let $(H,d)$ and $(H',d')$ be two Hilbert complexes, with Laplacians $\Delta$ and $\Delta'$, respectively.
 Assume that $d$ and $d'$ have closed range (in all degrees).
 Denote by $N$ the inverse of the Laplacian for $(H,d)\hgtensor (H',d')$ as in \cref{inverse_of_laplacian_properties}.
 Then the following are equivalent:
 \begin{enumerate}
  \item\label{item:characterization_of_compactness_of_S_for_product_complex_N_compact}
   $N_i \colon (H\hgtensor H')_i \to (H\hgtensor H')_i$ is a compact operator.
  \item\label{item:characterization_of_compactness_of_S_for_product_complex_N_compact_restrictions}
   $N_i|_{H_j \htensor H'_k} \colon H_j \htensor H'_k \to H_j \htensor H'_k$ is a compact operator for all $j,k \in \mathbb Z$ with $j+k=i$.
  \item\label{item:characterization_of_compactness_of_S_for_product_complex_trivial}
   $\essspec{\Delta_j} + \spec{\Delta'_k} \subseteq \{0\}$ and $\spec{\Delta_j} + \essspec{\Delta'_k} \subseteq \{0\}$ for all $j,k \in \mathbb Z$ with $j+k=i$.
 \end{enumerate}
 If, in addition, $(H,d)$ and $(H',d')$ are nondegenerate, then the above are also equivalent to:
 \begin{enumerate}
  \setcounter{enumi}{3}
  \item\label{item:characterization_of_compactness_of_S_for_product_complex_nondegenerate}
   $\essspec{\Delta_j} = \essspec{\Delta'_k} = \emptyset$ for all $j \in \supp(H,d)$ and $k \in \supp(H',d')$ with $j+k=i$.
  \item\label{item:characterization_of_compactness_of_S_for_product_complex_nondegenerate_essential_spectrum_empty}
   $\essspec{\widetilde\Delta_i} = \emptyset$, where $\widetilde\Delta$ is the Laplacian for the tensor product complex.
  \item\label{item:characterization_of_compactness_of_S_for_product_complex_nondegenerate_compactness_of_factors}
   For all $j \in \supp(H,d)$ and $k \in \supp(H',d')$ with $j+k=i$,
   $$ \dim{\cohom[j]{H,d}} < \infty \quad\text{and}\quad \dim{\cohom[k]{H',d'}} < \infty, $$
   and the operators
   $$ N_j(H,d) \colon H_j \to H_j \quad\text{and}\quad N_k(H',d') \colon H'_k \to H'_k $$
   are compact.
 \end{enumerate}
\end{thm}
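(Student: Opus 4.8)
The plan is to exploit the direct-sum structure of the product Laplacian furnished by \cref{laplacian_of_product_hilbert_complex} and to reduce each equivalence to the single-factor results of \cref{sec:hilbert_complexes} together with the joint-spectrum formula of \cref{sec:joint_spectra}. By \cref{laplacian_of_product_hilbert_complex}, $\widetilde\Delta_i$ is the closure of $\bigoplus_{j+k=i}(\Delta_j\otimes\id{H'_k}+\id{H_j}\otimes\Delta'_k)$; I write $T_{j,k}$ for the closure of the individual $(j,k)$-summand, a positive self-adjoint operator on $H_j\htensor H'_k$. Since $d$ and $d'$ have closed range, \cref{laplacian_of_product_hilbert_complex}\,\cref{item:laplacian_of_product_hilbert_complex_closed_range} together with \cref{equivalent_statements_to_N_bounded} gives that $\widetilde\Delta_i$, and hence each $T_{j,k}$, has closed range; thus $N_i$ is bounded and splits as the orthogonal direct sum $N_i=\bigoplus_{j+k=i}N_{j,k}$, where $N_{j,k}$ is the inverse of $T_{j,k}$ modulo its kernel. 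As a finite orthogonal direct sum of operators is compact if and only if each summand is, this yields the equivalence of \cref{item:characterization_of_compactness_of_S_for_product_complex_N_compact} and \cref{item:characterization_of_compactness_of_S_for_product_complex_N_compact_restrictions}.

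For \cref{item:characterization_of_compactness_of_S_for_product_complex_N_compact_restrictions}$\Leftrightarrow$\cref{item:characterization_of_compactness_of_S_for_product_complex_trivial} I would extract the purely operator-theoretic core of \cref{characterization_of_compactness_of_S}: for a positive self-adjoint operator $T$ with closed range whose inverse modulo kernel is $N$, one has that $N$ is compact if and only if $\essspec{T}\subseteq\{0\}$, since the final paragraph of that proof uses only these properties. Applying this to the pair $T_{j,k}$, $N_{j,k}$ and substituting $\essspec{T_{j,k}}=(\essspec{\Delta_j}+\spec{\Delta'_k})\cup(\spec{\Delta_j}+\essspec{\Delta'_k})$ from \cref{spectrum_of_T_plus_S} rewrites the compactness of each $N_{j,k}$ as exactly the two Minkowski-sum conditions of \cref{item:characterization_of_compactness_of_S_for_product_complex_trivial}.

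Assuming now that both complexes are nondegenerate, the remaining steps are bookkeeping of Minkowski sums. For \cref{item:characterization_of_compactness_of_S_for_product_complex_trivial}$\Leftrightarrow$\cref{item:characterization_of_compactness_of_S_for_product_complex_nondegenerate}, I would observe that a sum such as $\essspec{\Delta_j}+\spec{\Delta'_k}$ is empty (hence trivially contained in $\{0\}$) whenever $j\notin\supp(H,d)$ or $k\notin\supp(H',d')$, since then one factor acts on the zero space; so only the indices with $j\in\supp(H,d)$ and $k\in\supp(H',d')$ impose a constraint. For such indices \cref{hilbert_complex_nondegeneracy} supplies a strictly positive point of $\spec{\Delta'_k}$, whence $\essspec{\Delta_j}+\spec{\Delta'_k}\subseteq\{0\}$ forces $\essspec{\Delta_j}=\emptyset$, and symmetrically $\essspec{\Delta'_k}=\emptyset$; the converse is immediate by the same support bookkeeping. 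The equivalence with \cref{item:characterization_of_compactness_of_S_for_product_complex_nondegenerate_essential_spectrum_empty} then follows by reading emptiness of $\essspec{\widetilde\Delta_i}$ off the union \cref{eq:essential_spectrum_of_product_laplacian}, which vanishes precisely when every constituent sum does, i.e. precisely under \cref{item:characterization_of_compactness_of_S_for_product_complex_nondegenerate}.

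Finally, for \cref{item:characterization_of_compactness_of_S_for_product_complex_nondegenerate}$\Leftrightarrow$\cref{item:characterization_of_compactness_of_S_for_product_complex_nondegenerate_compactness_of_factors} I would invoke the single-factor dictionary: since $d$ has closed range, $\cohom[j]{H,d}=\redcohom[j]{H,d}\cong\ker(\Delta_j)$, so finiteness of the $j$th cohomology means $\dim{\ker(\Delta_j)}<\infty$, equivalently $0\notin\essspec{\Delta_j}$ by \cref{inverse_of_laplacian_properties_bounded} (the kernel being an isolated eigenspace once the range is closed); meanwhile \cref{characterization_of_compactness_of_S} gives that $N_j(H,d)$ is compact if and only if $\essspec{\Delta_j}\subseteq\{0\}$. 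Combining these, $\essspec{\Delta_j}=\emptyset$ holds exactly when $\dim{\cohom[j]{H,d}}<\infty$ and $N_j(H,d)$ is compact, and likewise for the primed factor, which is \cref{item:characterization_of_compactness_of_S_for_product_complex_nondegenerate_compactness_of_factors}. The hard part will be the step \cref{item:characterization_of_compactness_of_S_for_product_complex_N_compact_restrictions}$\Leftrightarrow$\cref{item:characterization_of_compactness_of_S_for_product_complex_trivial}: one must verify carefully that $N_i$ genuinely decomposes as $\bigoplus_{j+k=i}N_{j,k}$ with each $N_{j,k}$ the inverse-modulo-kernel of the \emph{closure} $T_{j,k}$, so that each summand has closed range and the compactness criterion is applicable, and that \cref{spectrum_of_T_plus_S} computes the essential spectrum of this closure rather than that of the un-closed algebraic sum.
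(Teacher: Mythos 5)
Your proposal is correct and follows essentially the same route as the paper: reduce compactness of $N_i$ to $\essspec{\widetilde\Delta_i}\subseteq\{0\}$ via \cref{characterization_of_compactness_of_S}, feed in the Minkowski-sum formula \cref{eq:essential_spectrum_of_product_laplacian}, and then use \cref{hilbert_complex_nondegeneracy} together with items \cref{item:N_bounded_zero_in_essential_spectrum} of \cref{inverse_of_laplacian_properties_bounded} for the nondegenerate refinements. The only cosmetic difference is that you apply the compactness criterion blockwise to the summands $T_{j,k}$ (which requires your---legitimate---extraction of an abstract version of \cref{characterization_of_compactness_of_S} for positive self-adjoint operators with closed range), whereas the paper applies it once to the full tensor product complex and lets \cref{eq:essential_spectrum_of_product_laplacian} do the blockwise work.
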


\begin{proof}
 From \cref{laplacian_of_product_hilbert_complex} we know that $d\hgtensor d'$ has closed range, hence $N_i$ is a bounded operator for all $i \in \mathbb Z$ by \cref{equivalent_statements_to_N_bounded}.
 By \cref{characterization_of_compactness_of_S} and \cref{eq:essential_spectrum_of_product_laplacian}, $N_i$ is compact if and only if
 \begin{equation}\label{eq:proof_characterization_of_compactness_of_S_for_product_complex_1}
  \essspec{\Delta_j} + \spec{\Delta'_k} \subseteq \{0\} \quad\text{and}\quad \spec{\Delta_j} + \essspec{\Delta'_k} \subseteq \{0\}
 \end{equation}
 for all $j,k\in \mathbb Z$ such that $j + k = i$, so \cref{item:characterization_of_compactness_of_S_for_product_complex_N_compact} $\iff$ \cref{item:characterization_of_compactness_of_S_for_product_complex_trivial}.
 The equivalence \cref{item:characterization_of_compactness_of_S_for_product_complex_N_compact} $\iff$ \cref{item:characterization_of_compactness_of_S_for_product_complex_N_compact_restrictions} is obvious as the Laplacian of the tensor product complex, and hence also $N_i$, respects the decomposition $(H\hgtensor H')_i = \bigoplus_{j+k=i} H_j\htensor H'_k$.
 
 Now assume that $(H,d)$ and $(H',d')$ are nondegenerate.
 If both $j \in \supp(H,d)$ and $k \in \supp(H',d')$, then $\spec{\Delta_j} \not\subseteq\{0\}$ and $\spec{\Delta'_k}\not\subseteq \{0\}$ by \cref{hilbert_complex_nondegeneracy}.
 It is clear that $\essspec{\Delta_j} = \essspec{\Delta'_k} = \emptyset$ for $j$ and $k$ as in \cref{item:characterization_of_compactness_of_S_for_product_complex_nondegenerate} implies \cref{eq:proof_characterization_of_compactness_of_S_for_product_complex_1} for those $j$ and $k$.
 If $H_j$ or $H'_k$ is trivial, then \cref{eq:proof_characterization_of_compactness_of_S_for_product_complex_1} holds since the Laplacian is then the zero operator with empty spectrum.
 This shows \cref{item:characterization_of_compactness_of_S_for_product_complex_nondegenerate} $\implies$ \cref{item:characterization_of_compactness_of_S_for_product_complex_trivial}.
 Conversely, if \cref{item:characterization_of_compactness_of_S_for_product_complex_trivial} holds true, suppose $j \in \supp(H,d)$ and $k \in \supp(H',d')$ with $j+k=i$.
 Then $\spec{\Delta_j} \not\subseteq \{0\}$ and $\spec{\Delta'_k}\not\subseteq \{0\}$ by \cref{hilbert_complex_nondegeneracy} and hence \cref{eq:proof_characterization_of_compactness_of_S_for_product_complex_1} forces $\essspec{\Delta_j} = \essspec{\Delta'_k} = \emptyset$.
 
 The equivalence \cref{item:characterization_of_compactness_of_S_for_product_complex_nondegenerate} $\iff$ \cref{item:characterization_of_compactness_of_S_for_product_complex_nondegenerate_essential_spectrum_empty} is clear from \cref{eq:essential_spectrum_of_product_laplacian} and non-degeneracy.
 We have $\essspec{\Delta_j} = \emptyset$ if and only if $N_j(H,d)$ is compact (so that $\essspec{\Delta_j} \subseteq \{0\}$) and $\dim{\ker(\Delta_j)} = \dim{\cohom[j]{H,d}} < \infty$ (so that $0 \not\in \essspec{\Delta_j}$ by item~\cref{item:N_bounded_zero_in_essential_spectrum} of \cref{inverse_of_laplacian_properties_bounded}), and similarly for $\essspec{\Delta'_k}$.
 This shows \cref{item:characterization_of_compactness_of_S_for_product_complex_nondegenerate} $\iff$ \cref{item:characterization_of_compactness_of_S_for_product_complex_nondegenerate_compactness_of_factors}.
\end{proof}

We now provide several immediate corollaries concerning the non-compactness of $N$ and, by \cref{characterization_of_compactness_of_S}, non-compactness of the minimal solution operators.

\begin{cor}
 Let $(H,d)$ and $(H',d')$ be two nondegenerate Hilbert complexes as in \cref{characterization_of_compactness_of_S_for_product_complex}.
 Assume that there is $j \in \mathbb Z$ such that $N_j(H,d)$ is not compact on $H_j$.
 Then
 $$ N_{j+k}\colon (H\hgtensor H')_{j+k} \to (H\hgtensor H')_{j+k} $$
 is not compact either for all $k \in \supp(H',d')$.
\end{cor}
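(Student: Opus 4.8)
The plan is to reduce the claim to the failure of condition~\cref{item:characterization_of_compactness_of_S_for_product_complex_trivial} in \cref{characterization_of_compactness_of_S_for_product_complex} for a single, carefully chosen pair of indices. First I would note that the hypothesis forces $j \in \supp(H,d)$: if $H_j = 0$, then $N_j(H,d)$ is the zero operator and hence compact, contrary to assumption. Since $d$ has closed range in all degrees, \cref{equivalent_statements_to_N_bounded} shows that $N_j(H,d)$ is bounded, so \cref{characterization_of_compactness_of_S} applies and tells us that $N_j(H,d)$ is compact if and only if $\essspec{\Delta_j} \subseteq \{0\}$. As $N_j(H,d)$ is \emph{not} compact and $\Delta_j$ is positive, there must exist some $\lambda \in \essspec{\Delta_j}$ with $\lambda > 0$.

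Next I would fix $k \in \supp(H',d')$ and set $i \coloneqq j+k$, examining the summand indexed by the pair $(j,k)$. Because $k \in \supp(H',d')$ we have $H'_k \neq 0$, so $\Delta'_k$ is a self-adjoint operator on a nonzero Hilbert space and therefore $\spec{\Delta'_k} \neq \emptyset$; choosing any $\mu \in \spec{\Delta'_k}$ we have $\mu \geq 0$. Then $\lambda + \mu \geq \lambda > 0$ lies in the Minkowski sum $\essspec{\Delta_j} + \spec{\Delta'_k}$, so this set is nonempty and not contained in $\{0\}$. Consequently the first inclusion in condition~\cref{item:characterization_of_compactness_of_S_for_product_complex_trivial} of \cref{characterization_of_compactness_of_S_for_product_complex} fails for the pair $(j,k)$, and by the equivalence \cref{item:characterization_of_compactness_of_S_for_product_complex_N_compact}~$\iff$~\cref{item:characterization_of_compactness_of_S_for_product_complex_trivial} the operator $N_{j+k}$ on $(H\hgtensor H')_{j+k}$ cannot be compact, which is exactly the assertion.

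I do not expect a genuine obstacle here, as the corollary is essentially an immediate unpacking of \cref{characterization_of_compactness_of_S_for_product_complex}, whose standing hypothesis of closed range for $d$ and $d'$ is inherited. The only points demanding care are bookkeeping: one must verify that $(j,k)$ is an admissible index pair with both $H_j$ and $H'_k$ nonzero, so that the corresponding term actually enters \cref{eq:essential_spectrum_of_product_laplacian}, and one must respect the convention that a Minkowski sum is empty whenever one summand is---this is precisely why the restriction $k \in \supp(H',d')$ (ensuring $\spec{\Delta'_k} \neq \emptyset$) appears in the statement. It is worth remarking that the nondegeneracy hypothesis is not actually used in this argument; it is present only because the corollary inherits the nondegenerate setting of the preceding theorem.
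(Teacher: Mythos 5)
Your proof is correct and follows essentially the same route as the paper: deduce from \cref{characterization_of_compactness_of_S} that $j \in \supp(H,d)$ and $\essspec{\Delta_j}$ contains a positive value, then observe that the corresponding Minkowski-sum term in condition~(iii) of \cref{characterization_of_compactness_of_S_for_product_complex} is not contained in $\{0\}$ for any $k \in \supp(H',d')$. Your side remark that nondegeneracy is not actually needed for this particular equivalence is also accurate.
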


\begin{proof}
 In this case, $j \in \supp(H,d)$ and $\essspec{\Delta_j}$ is not empty (it contains values other than $0$) by \cref{characterization_of_compactness_of_S}.
 Now apply \cref{characterization_of_compactness_of_S_for_product_complex}.
\end{proof}


\begin{cor}\label{solution_operator_noncompact_product_inf_dim_cohom}
 Let $(H,d)$ and $(H',d')$ be two nondegenerate Hilbert complexes as in \cref{characterization_of_compactness_of_S_for_product_complex}.
 Let $\Delta$ and $\Delta'$ be their respective Laplacians and denote by $\widetilde\Delta$ the Laplacian of the tensor product complex $(H \hgtensor H',d \hgtensor d')$. 
 \begin{enumerate}
  \item If there exists $i \in \mathbb Z$ such that
   $$ \dim{\ker(\widetilde\Delta_i)} = \dim{\cohom[i]{H\hgtensor H',d\hgtensor d'}} = \infty, $$
   then $N_i \colon (H\hgtensor H')_i \to (H\hgtensor H')_i$ is not compact.
  \item If there exists $j\in \mathbb Z$ such that
   $$ \dim{\ker(\Delta_j)} = \dim{\cohom[j]{H,d}} = \infty, $$
   then $N_{j+k}\colon (H\hgtensor H')_{j+k} \to (H\hgtensor H')_{j+k}$ is not compact for all $k\in\supp(H',d')$.
 \end{enumerate}
\end{cor}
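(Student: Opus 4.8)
The plan is to derive both parts directly from \cref{characterization_of_compactness_of_S_for_product_complex}, whose standing hypotheses (closed range and nondegeneracy) are exactly those assumed here. The only property I need is the equivalence of compactness of $N_i$ with item~\cref{item:characterization_of_compactness_of_S_for_product_complex_nondegenerate_compactness_of_factors} of that theorem: for nondegenerate factors with closed range, $N_i$ is compact precisely when \emph{every} contributing factor cohomology $\cohom[j]{H,d}$ and $\cohom[k]{H',d'}$ (indices $j\in\supp(H,d)$, $k\in\supp(H',d')$, $j+k=i$) is finite dimensional and the corresponding factor operators $N_j(H,d),N_k(H',d')$ are compact. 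Both assertions then follow by contraposition once a single contributing factor is seen to have infinite-dimensional cohomology.

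I would dispose of the second part first, as it is immediate. Since $\dim{\cohom[j]{H,d}}=\infty$, in particular $\cohom[j]{H,d}\neq0$, so $H_j\neq0$ and $j\in\supp(H,d)$. Fix any $k\in\supp(H',d')$ and put $i=j+k$. If $N_i$ were compact, then item~\cref{item:characterization_of_compactness_of_S_for_product_complex_nondegenerate_compactness_of_factors} of \cref{characterization_of_compactness_of_S_for_product_complex}, applied to the admissible pair $(j,k)$, would force $\dim{\cohom[j]{H,d}}<\infty$, a contradiction. Hence $N_{j+k}$ is not compact for every $k\in\supp(H',d')$.

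For the first part I would argue by contraposition together with the Künneth formula. The closed-range hypothesis and item~\cref{item:laplacian_of_product_hilbert_complex_closed_range} of \cref{laplacian_of_product_hilbert_complex} ensure that $d\hgtensor d'$ has closed range, so reduced and unreduced cohomology of the product agree and $\dim{\ker(\widetilde\Delta_i)}=\dim{\cohom[i]{H\hgtensor H',d\hgtensor d'}}$, matching the hypothesis. Suppose, for contradiction, that $N_i$ is compact. Then item~\cref{item:characterization_of_compactness_of_S_for_product_complex_nondegenerate_compactness_of_factors} of \cref{characterization_of_compactness_of_S_for_product_complex} shows $\dim{\cohom[j]{H,d}}<\infty$ and $\dim{\cohom[k]{H',d'}}<\infty$ for all $j\in\supp(H,d)$ and $k\in\supp(H',d')$ with $j+k=i$. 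Invoking the Künneth formula for Hilbert complexes (\cite{Grosu1982} or \cite[Corollary~2.15]{Bruening1992}), in the form
\[ \dim{\cohom[i]{H\hgtensor H',d\hgtensor d'}}=\sum_{j+k=i}\dim{\cohom[j]{H,d}}\cdot\dim{\cohom[k]{H',d'}}, \]
where the terms with $j\notin\supp(H,d)$ or $k\notin\supp(H',d')$ vanish, we conclude that the left-hand side is a finite sum of products of finite dimensions, hence finite. This contradicts $\dim{\cohom[i]{H\hgtensor H',d\hgtensor d'}}=\infty$, so $N_i$ is not compact.

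The one non-formal ingredient is the Künneth identity, equivalently the harmonic decomposition $\ker(\widetilde\Delta_i)=\bigoplus_{j+k=i}\ker(\Delta_j)\htensor\ker(\Delta'_k)$, stating that the kernel of the closure of $\Delta_j\otimes\id{H'_k}+\id{H_j}\otimes\Delta'_k$ is exactly $\ker(\Delta_j)\htensor\ker(\Delta'_k)$. I expect this to be the main obstacle, but it is a special case of the joint-spectral machinery of \cref{sec:joint_spectra}: since $\Delta_j\otimes\id{H'_k}$ and $\id{H_j}\otimes\Delta'_k$ are strongly commuting positive operators, the kernel of their sum is the joint spectral subspace for the eigenvalue $(0,0)$, which by positivity of both factors is precisely $\ker(\Delta_j)\htensor\ker(\Delta'_k)$; alternatively one simply cites the Künneth formula above.
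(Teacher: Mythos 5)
Your proposal is correct, but for part (i) it takes a genuinely different route from the paper. For part (ii) you argue exactly as the paper does (contrapositive through \cref{characterization_of_compactness_of_S_for_product_complex}; you invoke item~\cref{item:characterization_of_compactness_of_S_for_product_complex_nondegenerate_compactness_of_factors} where the paper uses the equivalent item~\cref{item:characterization_of_compactness_of_S_for_product_complex_nondegenerate}, which is immaterial). For part (i), however, the paper's argument is a one-liner that bypasses the K\"unneth formula entirely: an eigenvalue of infinite multiplicity lies in the essential spectrum by definition, so $\dim{\ker(\widetilde\Delta_i)}=\infty$ gives $0\in\essspec{\widetilde\Delta_i}$ directly, and item~\cref{item:characterization_of_compactness_of_S_for_product_complex_nondegenerate_essential_spectrum_empty} ($N_i$ compact $\iff$ $\essspec{\widetilde\Delta_i}=\emptyset$) finishes. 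You instead transfer the infinite-dimensionality of $\ker(\widetilde\Delta_i)$ to a single factor via the harmonic K\"unneth decomposition $\ker(\widetilde\Delta_i)=\bigoplus_{j+k=i}\ker(\Delta_j)\htensor\ker(\Delta'_k)$ and then contradict item~\cref{item:characterization_of_compactness_of_S_for_product_complex_nondegenerate_compactness_of_factors}. Your justification of that decomposition through the joint spectral measure (positivity forces $f^{-1}(\{0\})\cap\spec{T\htensor\id{},\id{}\htensor S}=\{(0,0)\}$, whose spectral projection is $P_{\Delta_j}(\{0\})\htensor P_{\Delta'_k}(\{0\})$) is sound, so the argument closes; it buys you the extra information of \emph{which} factor carries the infinite-dimensional cohomology, at the cost of an additional ingredient (K\"unneth) that the statement does not actually require.
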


\begin{proof}
 In the first case $0 \in \essspec{\widetilde\Delta_i}$, while $j \in \supp(H,d)$ and $0 \in \essspec{\Delta_j}$ in the second case.
 Now apply \cref{characterization_of_compactness_of_S_for_product_complex}.
\end{proof}

\section{Complexes of differential operators}

The main examples of Hilbert complexes are (closed extensions of) complexes of differential operators arising in differential geometry.
By this we mean a sequence of differential operators
$$ 0 \to C^\infty_c(M,E_0) \hilbcomprightarrow{d_0} C^\infty_c(M,E_1) \hilbcomprightarrow{d_1} C^\infty_c(M,E_2) \hilbcomprightarrow{d_2} \cdots \hilbcomprightarrow{d_{n-1}} C^\infty_c(M,E_n) \to 0 $$
with smooth vector bundles $E_i$ over a smooth manifold $M$.
We will denote such a complex simply by $(E,d)$.
Suppose that $M$ is Riemannian and that all $E_i$, $0\leq i \leq n$, are Hermitian bundles, so that we may consider the spaces $L^2(M,E_i)$ of square-integrable measurable sections of $E_i$.
The complex is called \emph{elliptic} if all the Laplacians
$$ \Delta^E_i \coloneqq d_i^t d_i + d_{i-1}d_{i-1}^t \colon C^\infty_c(M,E_i) \to C^\infty_c(M,E_i) $$
are elliptic differential operators, where $d_i^t \colon C^\infty_c(M,E_{i-1}) \to C^\infty_c(M,E_i)$ denotes the formal adjoint of $d_i$.
Any extension of $d_i$ to a closed operator on $L^2(M,E_i)$ must necessarily lie between its closure $d_{i,\minn} \coloneqq \overline{d_i}$ (the \emph{minimal}, or \emph{strong}, extension) and its \emph{maximal} (or \emph{weak}) closed extension $d_{i,\maxx}$, defined as the distributional derivative on
\begin{equation}\label{eq:maximal_extension_of_differential_operator_domain}
 \dom{d_{i,\maxx}} \coloneqq \big\{ s \in L^2(M,E_i) : d_i s \in L^2(M,E_i) \text{ in the sense of distributions}\big\}.
\end{equation}
A choice of closed extensions for $d_i$ that produces a Hilbert complex is also called an \emph{ideal boundary condition}.
Such ideal boundary conditions always exist.
Indeed, the minimal and maximal extensions themselves give rise to ideal boundary conditions,
$$ (L^2(M,E_\bullet),d_\minn) \quad\text{and}\quad (L^2(M,E_\bullet),d_\maxx), $$
see \cite[Lemma~3.1]{Bruening1992}.
We will mostly deal with the maximal extensions in this article.

Now consider two complexes of differential operators, say $(E,d^E)$ and $(F,d^F)$ over manifolds $M$ and $N$, respectively.
We proceed similarly to the construction of the tensor product of Hilbert complexes in order to obtain a complex of differential operators on $M \times N$.
Set
$$ (E\getensor F)_i \coloneqq \bigoplus_{j+k=i} E_j \boxtimes F_k, $$
where $E_j \boxtimes F_k \coloneqq (\pi_M^* E_j) \otimes (\pi_N^* F_k)$, with $\pi_M \colon M\times N \to M$ and $\pi_N \colon M\times N \to N$ the projections, is a vector bundle over $M\times N$ with fiber $(E_j)_x \otimes (F_k)_y$ over $(x,y) \in M\times N$.
If $M$ and $N$ are Riemannian and all vector bundles are Hermitian, then $M \times N$ and $(E\getensor F)_i$ are also equipped with metrics in a canonical way.

By $C^\infty_c(M,E_\bullet)$ we denote the $\mathbb Z$-graded vector space $\bigoplus_j C^\infty_c(M,E_j)$, and similarly for $C^\infty_c(N,F_\bullet)$.
Their graded tensor product $C^\infty_c(M,E_\bullet) \gtensor C^\infty_c(N,F_\bullet)$ is then defined as in \cref{eq:graded_vector_space_tensor_product}.
The following \namecref{unique_differential_operator_on_product_bundle} can be found in \cite[p.~110]{Bruening1992}:

\begin{lem}\label{unique_differential_operator_on_product_bundle}
 If $(E,d^E)$ and $(F,d^F)$ are complexes of differential operators, then there exists a unique complex of differential operators
 $$ d^{E\getensor F}_i \colon C^\infty_c(M\times N,(E\getensor F)_i) \to C^\infty_c(M\times N,(E\getensor F)_{i+1}) $$
 such that the diagram
 \begin{center}
  \begin{tikzcd}
   \cdots
    \arrow{r}{d^E\gtensor d^F}
   &
   (C^\infty_c(M,E_\bullet) \gtensor C^\infty_c(N,F_\bullet))_i
    \arrow{r}{d^E\gtensor d^F}
    \arrow{d}{\iota_i}
   &
   (C^\infty_c(M,E_\bullet) \gtensor C^\infty_c(N,F_\bullet))_{i+1}
    \arrow{r}{d^E\gtensor d^F}
    \arrow{d}{\iota_{i+1}}
   &
   \cdots
   
   \\
   
   \cdots
    \arrow{r}{d^{E\getensor F}}
   &
   C^\infty_c(M\times N,(E\getensor F)_i)
    \arrow{r}{d^{E\getensor F}}
   &
   C^\infty_c(M\times N,(E\getensor F)_{i+1})
    \arrow{r}{d^{E\getensor F}}
   &
   \cdots
  \end{tikzcd}
 \end{center}
 commutes, where $d^E \gtensor d^F$ is given by
 \begin{multline}\label{eq:unique_differential_operator_on_product_bundle_def_of_operator}
  \bigoplus_{j+k=i} \big(d^E_j \otimes \id{C^\infty_c(N,F_k)} + \sigma_j \otimes d^F_k\big) \colon \\
  (C^\infty_c(M,E_\bullet) \gtensor C^\infty_c(N,F_\bullet))_i \to (C^\infty_c(M,E_\bullet) \gtensor C^\infty_c(N,F_\bullet))_{i+1},
 \end{multline}
 with $\sigma_j \colon C^\infty_c(M,E_j) \to C^\infty_c(M,E_j)$ the multiplication by $(-1)^j$, and
 $$ \iota_i \colon (C^\infty_c(M,E_\bullet) \gtensor C^\infty_c(N,F_\bullet))_i \to C^\infty_c(M\times N,(E\getensor F)_i) $$
 is the canonical inclusion given by $\iota_i(s \otimes t)(x,y) \coloneqq s(x) \otimes t(y)$ for $s \in C^\infty_c(M,E_j)$ and $t \in C^\infty_c(N,F_k)$.
 If $(E,d^E)$ and $(F,d^F)$ are elliptic complexes, then so is $(E\getensor F, d^{E\getensor F})$.
\end{lem}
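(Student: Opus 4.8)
The plan is to build $d^{E\getensor F}$ by lifting the two factor operators to $M\times N$, to check commutativity of the diagram on decomposable sections, and then to obtain uniqueness (and global well-definedness) from a density argument; ellipticity will follow from a short symbol computation. First I would observe that each $d^E_j$ lifts canonically to a differential operator $\widetilde{d^E_j}$ on $M\times N$ that differentiates only in the $M$-direction: in a product chart $U\times V$ with trivializing frames, writing $d^E_j=\sum_\alpha A_\alpha(x)\,\partial_x^\alpha$, the lift has the same coefficients but with $\partial_x^\alpha$ now acting on sections of $E_j\boxtimes F_k$ in the $x$-variable alone (invariantly, it is $\pi_M^*d^E_j$ tensored with the identity on $\pi_N^*F_k$). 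Chart-independence holds because $d^E_j$ is globally defined on $M$ and the transition functions of $E_j\boxtimes F_k$ factor through those of $E_j$ and $F_k$; since the lift is local it preserves compact supports. Lifting $d^F_k$ in the $N$-direction to $\widetilde{d^F_k}$ in the same way, I would set
\begin{equation*}
 d^{E\getensor F}_i \coloneqq \bigoplus_{j+k=i}\big(\widetilde{d^E_j} + \sigma_j\,\widetilde{d^F_k}\big).
\end{equation*}

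On a decomposable section $\iota_i(s\otimes t)$ with $s\in C^\infty_c(M,E_j)$ and $t\in C^\infty_c(N,F_k)$ the lifts act by $\widetilde{d^E_j}(s\otimes t)=(d^E_j s)\otimes t$ and $\widetilde{d^F_k}(s\otimes t)=s\otimes(d^F_k t)$, so $d^{E\getensor F}$ produces exactly $\iota_{i+1}\big((d^E_j s)\otimes t + (-1)^j s\otimes(d^F_k t)\big)$, which is the effect of $\iota_{i+1}\circ(d^E\gtensor d^F)$ in \cref{eq:unique_differential_operator_on_product_bundle_def_of_operator}. Hence the diagram commutes on the image of $\iota$. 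That $d^{E\getensor F}\circ d^{E\getensor F}=0$ is inherited from the factor complexes by the same sign cancellation $\sigma_{j+1}d^E_j=-d^E_j\sigma_j$ that makes the algebraic graded tensor product a complex: the composite vanishes on decomposable sections, and being a differential operator that vanishes on a dense set it vanishes identically.

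For uniqueness I would invoke that finite sums of decomposable sections are dense in $C^\infty_c(M\times N,(E\getensor F)_i)$ in the natural $C^\infty_c$-topology; this is the familiar density of $\testf(M)\otimes\testf(N)$ in $\testf(M\times N)$, extended to the bundle-valued setting by local trivializations together with a partition of unity. As every differential operator is continuous for this topology, any two differential operators agreeing on the image of $\iota$ must coincide, which simultaneously pins down $d^{E\getensor F}$ and confirms that the locally defined lifts patch together consistently.

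Finally, for ellipticity I would pass to the Laplacian. By the same algebraic cancellation of cross terms as in the proof of \cref{laplacian_of_product_hilbert_complex} — legitimate here once one checks, via Fubini and the product metric, that the formal adjoint of $\widetilde{d^E_j}$ is $\widetilde{(d^E_j)^t}$ — the Laplacian is block-diagonal, with $(j,k)$-block $\widetilde{\Delta^E_j}+\widetilde{\Delta^F_k}$ on $E_j\boxtimes F_k$. Writing $a_j(x,\xi)$ and $b_k(y,\eta)$ for the principal symbols of $\Delta^E_j$ and $\Delta^F_k$, and using that each lift differentiates in one factor only, the principal symbol of this block at $(x,y;\xi,\eta)$ is $a_j(x,\xi)\otimes\mathrm{id}+\mathrm{id}\otimes b_k(y,\eta)$. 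Both symbols are positive semidefinite (being of the form $d^t d + d d^t$ at the symbol level), and ellipticity of the factors makes $a_j(x,\xi)$ positive definite for $\xi\neq 0$ and $b_k(y,\eta)$ positive definite for $\eta\neq 0$. For $(\xi,\eta)\neq 0$ one summand is positive definite and the other positive semidefinite, so the eigenvalues of the block — the sums $\alpha+\beta$ of eigenvalues of the two symbols — are strictly positive; the symbol is therefore invertible and $(E\getensor F,d^{E\getensor F})$ is elliptic. The one genuinely non-formal ingredient is the density step of the third paragraph, on which both uniqueness and the global coherence of the construction rest.
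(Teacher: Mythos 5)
Your proof is correct, but note that the paper does not actually prove this lemma: it cites \cite[p.~110]{Bruening1992} and only records, in the sentence following the statement, the one non-formal ingredient, namely that $\iota_i$ has sequentially dense image in $C^\infty_c(M\times N,(E\getensor F)_i)$ for the $LF$-topology. Your argument is therefore a self-contained substitute for the reference rather than a variant of an argument in the paper, and it is built around exactly the density fact the paper flags: existence comes from the invariant lifts $\pi_M^*d^E_j\otimes\mathrm{id}$ and $\mathrm{id}\otimes\pi_N^*d^F_k$ (which are manifestly differential operators, so locality and preservation of compact supports are automatic), the diagram commutes by inspection on decomposable sections, and both uniqueness and the relation $d^{E\getensor F}\circ d^{E\getensor F}=0$ follow from continuity of differential operators on $\testf$-type spaces together with density of the algebraic tensor product. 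Your ellipticity argument via the block-diagonalization of the product Laplacian and the symbol identity $a_j(x,\xi)\otimes\mathrm{id}+\mathrm{id}\otimes b_k(y,\eta)$ is also sound, and it correctly reuses the sign cancellation from the proof of \cref{laplacian_of_product_hilbert_complex}; the point that positive definiteness of one summand plus positive semidefiniteness of the other forces invertibility for $(\xi,\eta)\neq 0$ is the right one.

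One minor caveat on the last step: the identification of the principal symbol of $\widetilde{\Delta^E_j}+\widetilde{\Delta^F_k}$ with $a_j\otimes\mathrm{id}+\mathrm{id}\otimes b_k$ tacitly assumes that $d^E$ and $d^F$ have the same order, so that both lifts contribute to the top-order part; for factors of different orders the homogeneous principal symbol of the sum degenerates along one factor's cotangent directions (e.g.\ $-\partial_x^2+\partial_y^4$ on $\mathbb R^2$) and the conclusion fails in the classical sense. This is a convention implicit in the statement (all complexes arising in the paper are first order), not a defect of your argument, but it is worth making the assumption explicit.
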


In the proof of \cref{unique_differential_operator_on_product_bundle}, one uses the fact that, via $\iota_i$, the space $(C^\infty_c(M,E_\bullet) \gtensor C^\infty_c(N,F_\bullet))_i$ is sequentially dense in $C^\infty_c(M\times N,(E\getensor F)_i)$ for the $LF$-topology.

\begin{example}\label{ex:de_rham_complex}
 Let $M$ and $N$ be smooth manifolds, and consider their de~Rham complexes
 $$ d^M_j \colon \Omega^j_c(M) \to \Omega^{j+1}_c(M) \quad\text{and}\quad d^N_k \colon \Omega^k_c(N) \to \Omega^{k+1}_c(N), $$
 where $\Omega^j_c(M) \coloneqq C^\infty_c(M,\Lambda^j T^*M)$ and similarly for $\Omega^k_c(N)$, so that $E_j = \Lambda^j T^*M$ and $F_k = \Lambda^k T^*N$ in the language of \cref{unique_differential_operator_on_product_bundle}.
 Since the cotangent bundle of the product $M\times N$ splits as $T^*(M \times N) \cong \pi_M^*(T^*M) \oplus \pi_N^*(T^*N)$, we get
 \begin{equation}\label{eq:exterior_power_of_cotangent_bundle_of_product_manifold}
  \Lambda^i T^*(M\times N) \cong \bigoplus_{j+k=i} \pi_M^*(\Lambda^j T^*M) \otimes \pi_N^*(\Lambda^k T^*N) = \bigoplus_{j+k=i} (\Lambda^j T^*M)\boxtimes (\Lambda^k T^*N)
 \end{equation}
 from the properties of the exterior algebra functor, hence $(E\getensor F)_i$ is the vector bundle of $i$-forms on $M \times N$, and
 $$ d^{E\getensor F}_i \colon C^\infty_c(M\times N,\Lambda^i T^*(M\times N)) \to C^\infty_c(M\times N,\Lambda^{i+1} T^*(M\times N)) $$
 is the de~Rham differential for the product manifold, since this obviously extends \cref{eq:unique_differential_operator_on_product_bundle_def_of_operator} by the Leibniz rule for the exterior derivative.
 Note that when accounting for the isomorphism \cref{eq:exterior_power_of_cotangent_bundle_of_product_manifold}, the map $\iota_i \colon \bigoplus_{j+k=i} \Omega^j_c(M) \otimes \Omega^k_c(N) \to \Omega^i_c(M\times N)$
 is given by $\iota_i(\omega \otimes \eta) = \pi_M^*\omega \wedge \pi_N^*\eta$.
\end{example}

\begin{example}\label{ex:dolbeault_complex}
 Let $X$ and $Y$ be complex manifolds, $E \to X$ and $F \to Y$ two holomorphic vector bundles, and consider, for fixed $1\leq p'\leq \dim[\mathbb C]{X}$ and $1 \leq p'' \leq \dim[\mathbb C]{Y}$, the Dolbeault complexes
 $$ \dbar^E_{p',\bullet} \colon \Omega^{p',\bullet}_c(X,E) \to \Omega^{p',\bullet+1}_c(X,E) \quad\text{and}\quad \dbar^F_{p'',\bullet} \colon \Omega^{p'',\bullet}_c(Y,F) \to \Omega^{p'',\bullet+1}_c(Y,F), $$
 where $\Omega^{p',q'}_c(X,E) \coloneqq C^\infty_c(X,\Lambda^{p',q'}T^*X \otimes E)$ denotes the space of compactly supported smooth $(p',q')$ forms on $X$ with values in $E$.
 One might expect the resulting tensor product complex on $X\times Y$ to be the $\dbar^{E\boxtimes F}$-complex, with $E\boxtimes F \coloneqq \pi_X^*E \otimes \pi_Y^*F$, restricted to those $(p'+p'',q)$ forms 
 which are sections of
 \begin{equation}\label{eq:dolbeault_complex_bundle_partial_forms}
  \pi_X^*(\Lambda^{p',0}T^*X) \otimes \pi_Y^*(\Lambda^{p'',0}T^*Y) \otimes \Lambda^{0,\bullet}T^*(X \times Y) \otimes (E\boxtimes F).
 \end{equation}
 This is true up to a sign factor.
 Consider the cochain complex
 $$ \dbar^E_{p',\bullet} \getensor (-1)^{p'}\dbar^F_{p'',\bullet} \colon \Omega^{p',\bullet}_c(X,E) \gtensor \Omega^{p'',\bullet}_c(X,F) \to \Omega^{p',\bullet}_c(X,E) \gtensor \Omega^{p'',\bullet}_c(X,F) $$
 as in \cref{eq:unique_differential_operator_on_product_bundle_def_of_operator}, and the dense inclusions (for the $LF$-topology)
 \begin{equation}\label{eq:dolbeault_complex_embedding}
  \iota_q^{p',p''} \colon (\Omega^{p',\bullet}_c(X,E) \gtensor \Omega^{p'',\bullet}_c(Y,F))_q \to \bigoplus_{q'+q''=q} C^\infty_c(X\times Y,(\Lambda^{p',q'}T^*X \otimes E) \boxtimes (\Lambda^{p'',q''}T^*Y \otimes F))
 \end{equation}
 given, as in \cref{unique_differential_operator_on_product_bundle}, by $\iota_q^{p',q'}(\omega \otimes \eta) \coloneqq \pi_X^* \omega \otimes \pi_Y^*\eta$.
 We denote the right hand side of \cref{eq:dolbeault_complex_embedding} by $\Omega_c(E,F)_q^{p',p''}$.
 Note that this may be identified with the space of smooth compactly supported sections of \cref{eq:dolbeault_complex_bundle_partial_forms}.
 According to the bundle isomorphism
 $$ \Lambda^{p,q}T^*(X\times Y)\otimes (E\boxtimes F) \cong \bigoplus_{\substack{p'+p''=p \\ q'+q''=q}} (\Lambda^{p',q'}T^*X \otimes E)\boxtimes (\Lambda^{p'',q''}T^*Y\otimes F), $$
 the full space of $(p,q)$ forms decomposes as $\Omega^{p,q}_c(X\times Y,E\boxtimes F) \cong \bigoplus_{p'+p''=p} \Omega_c(E,F)_q^{p',p''}$.
 Now for $\omega \in \Omega^{p',q'}_c(X,E)$ and $\eta \in \Omega^{p'',q''}_c(Y,F)$ with $p'+p''=p$ and $q'+q''=q$, we have $\iota_q^{p',p''}(\omega\otimes\eta) \in \Omega_c(E,F)_q^{p',p''}$ and, with $\dbar^{E\boxtimes F}$ being understood as up to the above isomorphism,
 $$ \dbar^{E\boxtimes F}_{p,q}(\iota_q^{p',p''}(\omega\otimes\eta)) = \pi_X^*\big(\dbar^E_{p',q'}\omega\big)\otimes \pi_Y^*\eta + (-1)^{q'}\pi_X^*\omega \otimes \pi_Y^*\big((-1)^{p'}\dbar^F_{p'',q''}\eta\big) \in \Omega_c(E,F)_{q+1}^{p',p''} $$
 because the total degree of $\omega$ is $p'+q'$, and this is precisely $\iota_q^{p',p''}(\dbar^E_{p',\bullet} \getensor (-1)^{p'}\dbar^F_{p'',\bullet})(\omega \otimes \eta)$.
 By \cref{unique_differential_operator_on_product_bundle}, the restriction of $\dbar^{E\boxtimes F}$ to  $\Omega_c(E,F)_\bullet^{p',p''}$ is the unique complex of differential operators extending $\dbar^E_{p',\bullet} \getensor (-1)^{p'}\dbar^F_{p'',\bullet}$ via $\iota_\bullet^{p',p''}$.
 Note that the situation is a lot simpler (as simple as in \cref{ex:de_rham_complex}) if one only considers $(0,q)$ forms.
\end{example}

We now extend the above situation to the level of Hilbert complexes obtained from $(E,d^E)$ and $(F,d^F)$.
First note that the inclusions $\iota_i$ extend to a unitary isomorphism of graded Hilbert spaces
$$ \hat\iota \coloneqq \bigoplus_i \hat\iota_i \colon L^2(M,E_\bullet) \hgtensor L^2(N,F_\bullet) \xrightarrow{\cong} L^2(M\times N,(E\getensor F)_\bullet), $$
where $L^2(M,E_\bullet) \coloneqq \bigoplus_j L^2(M,E_j)$, and similarly for $L^2(N,F_\bullet)$ and $L^2(M\times N,(E\getensor F)_\bullet)$.
The next result is Lemma~3.6 in \cite{Bruening1992}:

\begin{lem}\label{maximal_extensions_of_differential_complexes_product}
 Let $(E,d^E)$ and $(F,d^F)$ be complexes of differential operators with Hermitian bundles over Riemannian manifolds, and $(E\getensor F,d^{E\gtensor F})$ their tensor product as in \cref{unique_differential_operator_on_product_bundle}.
 Then the diagram 
 \begin{center}
  \begin{tikzcd}
   \cdots
    \arrow{r}{d^E_\maxx \hgtensor d^F_\maxx}
   &
   \dom{(d^E_\maxx \hgtensor d^F_\maxx)_i}
    \arrow{r}{d^E_\maxx \hgtensor d^F_\maxx}
    \arrow{d}{\hat\iota_i}[swap]{\cong}
   &
   \dom{(d^E_\maxx \hgtensor d^F_\maxx)_{i+1}}
    \arrow{r}{d^E_\maxx \hgtensor d^F_\maxx}
    \arrow{d}{\hat\iota_{i+1}}[swap]\cong
   &
   \cdots
   
   \\
   
   \cdots
    \arrow{r}{d^{E\getensor F}_\maxx}
   &
   \dom{d^{E\getensor F}_{i,\maxx}}
    \arrow{r}{d^{E\getensor F}_\maxx}
   &
   \dom{d^{E\getensor F}_{i+1,\maxx}}
    \arrow{r}{d^{E\getensor F}_\maxx}
   &
   \cdots
  \end{tikzcd}
 \end{center}
 commutes, where $d^E_\maxx$, $d^F_\maxx$ and $d^{E\getensor F}_\maxx$ denote the (differentials of the) Hilbert complexes of the maximal closed extensions of $d^E$, $d^F$ and $d^{E\getensor F}$, respectively, and $d^E_\maxx \hgtensor d^F_\maxx$ is the differential of the tensor product Hilbert complex, see \cref{def:product_hilbert_complex}.
 In other words, $\hat\iota$ is a unitary equivalence between $(L^2(M,E_\bullet),d^E_\maxx)\hgtensor (L^2(N,F_\bullet),d^F_\maxx)$ and $(L^2(M\times N,(E\getensor F)_\bullet),d^{E\getensor F}_\maxx)$.
 An analogous statement holds for the minimal extensions.
\end{lem}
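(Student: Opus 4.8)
The plan is to prove, in each degree $i$, that the unitary $\hat\iota_i$ conjugates the tensor product differential $(d^E_\maxx\hgtensor d^F_\maxx)_i$ into the maximal extension $d^{E\getensor F}_{i,\maxx}$; commutativity of the displayed diagram \emph{is} exactly this identity of closed operators, and once it holds in every degree $\hat\iota$ is automatically a unitary equivalence of Hilbert complexes, since it is unitary, intertwines the differentials, and hence carries domains onto domains. Recall from \cref{unique_differential_operator_on_product_bundle} that on the dense subspace $\iota_i(C^\infty_c(M,E_\bullet)\gtensor C^\infty_c(N,F_\bullet))$ the two differentials already agree by the Leibniz rule, so the whole content is the comparison of two \emph{closed} extensions of this common restriction. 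I would prove the identity by the two inclusions $\hat\iota_{i+1}(d^E_\maxx\hgtensor d^F_\maxx)_i\hat\iota_i^{-1}\subseteq d^{E\getensor F}_{i,\maxx}$ and the reverse inclusion.

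The first inclusion is the routine one. By \cref{def:product_hilbert_complex}, $(d^E_\maxx\hgtensor d^F_\maxx)_i$ is the closure of $\bigoplus_{j+k=i}(d^E_{j,\maxx}\otimes\id{}+\sigma_j\otimes d^F_{k,\maxx})$ on $\dom{d^E_\maxx}\gtensor\dom{d^F_\maxx}$. On a generator $s\otimes t$ with $s\in\dom{d^E_{j,\maxx}}$ and $t\in\dom{d^F_{k,\maxx}}$ I would verify, by pairing against test sections and reducing to product test sections via the $LF$-density noted after \cref{unique_differential_operator_on_product_bundle} (then Fubini and the distributional rule on each factor), the distributional product rule $d^{E\getensor F}\hat\iota_i(s\otimes t)=\hat\iota_{i+1}\big((d^E s)\otimes t+(-1)^j s\otimes(d^F t)\big)$, whose right-hand side is in $L^2$. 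Hence the conjugated operator is a restriction of the closed operator $d^{E\getensor F}_{i,\maxx}$ on the core $\hat\iota_i(\dom{d^E_\maxx}\gtensor\dom{d^F_\maxx})$, and taking closures gives the first inclusion.

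The reverse inclusion $d^{E\getensor F}_{i,\maxx}\subseteq\hat\iota_{i+1}(d^E_\maxx\hgtensor d^F_\maxx)_i\hat\iota_i^{-1}$ is the crux, and I would obtain it by duality rather than by direct approximation. Taking Hilbert-space adjoints turns the sought identity into the corresponding identity for the \emph{minimal} extensions of the formal-adjoint complexes: one uses the Green's-formula duality $(d_\maxx)^*=(d^t)_\minn$ on each factor and on the product, the identification of the formal adjoint $(d^{E\getensor F})^t$ with the tensor product (up to the Koszul signs carried by the $\sigma_j$) of the formal-adjoint complexes $(E^*,(d^E)^t)$ and $(F^*,(d^F)^t)$, and the adjoint formula $(d^E_\maxx\hgtensor d^F_\maxx)^*=(d^E_\maxx)^*\hgtensor(d^F_\maxx)^*$ for tensor products of Hilbert complexes. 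Under these identifications the adjoint of the maximal statement is exactly the minimal statement for the adjoint complexes, and that version is easy: since $C^\infty_c(M,E_\bullet)\gtensor C^\infty_c(N,F_\bullet)$ is $LF$-dense in $C^\infty_c(M\times N,(E\getensor F)_\bullet)$ and $d^{E\getensor F}$ is $LF$-continuous, the graph closures coincide, while $\dom{d^E_\minn}\gtensor\dom{d^F_\minn}$ has the same graph closure as $C^\infty_c\gtensor C^\infty_c$ because each factor minimal domain is the graph closure of compactly supported sections.

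The main obstacle is therefore the adjoint and sign bookkeeping underlying the duality step: verifying cleanly that the formal adjoint of the product differential operator is the tensor product of the formal adjoints, and that $(d^E_\maxx\hgtensor d^F_\maxx)^*=(d^E_\maxx)^*\hgtensor(d^F_\maxx)^*$ as complex differentials. The second point is the genuinely nonformal ingredient, since the stated rule $(T\htensor S)^*=T^*\htensor S^*$ applies to a single tensor factor and must be upgraded to the sum-type differential, in the spirit of the adjoint computation in the proof of \cref{laplacian_of_product_hilbert_complex}. Should the duality bookkeeping prove awkward, the fallback for the reverse inclusion is a direct regularization: localize by a partition of unity and trivialize the bundles to reduce to product domains in $\mathbb R^m\times\mathbb R^n$, apply product Friedrichs mollifiers $\rho_\varepsilon\boxtimes\rho_\delta$ and control their commutators with the first-order operators by Friedrichs' lemma to get graph-norm convergence, and finally approximate the smoothed sections by finite sums of tensor products; here the difficulty is precisely that mollification need not respect the maximal boundary behavior on an incomplete manifold, which is exactly what the duality route circumvents. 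The minimal-extension analogue asserted in the lemma follows by the same argument, being the easy, $LF$-density half already used above.
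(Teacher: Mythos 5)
The paper does not actually prove this lemma --- it is quoted as Lemma~3.6 of \cite{Bruening1992} --- so your proposal can only be measured against the standard argument. Your skeleton is the right one and essentially coincides with it: the inclusion $\hat\iota_{i+1}(d^E_\maxx\hgtensor d^F_\maxx)_i\hat\iota_i^{-1}\subseteq d^{E\getensor F}_{i,\maxx}$ by checking the distributional Leibniz rule on elementary tensors and passing to closures; the minimal-extension statement by $LF$-density of $C^\infty_c(M,E_\bullet)\gtensor C^\infty_c(N,F_\bullet)$ in $C^\infty_c(M\times N,(E\getensor F)_\bullet)$ together with the observation that $\dom{d^E_\minn}\gtensor\dom{d^F_\minn}$ and $C^\infty_c\gtensor C^\infty_c$ have the same graph closure; and the reverse inclusion for the maximal extensions by dualizing via $(d_\maxx)^*=(d^t)_\minn$. (One cosmetic slip: the formal adjoint complex lives on the same Hermitian bundles $E_\bullet$ and $F_\bullet$, the metrics being used precisely to avoid passing to duals, so the reference to $E^*$ should be dropped.)

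The genuine gap is the step you yourself flag as ``the genuinely nonformal ingredient'': the identity $(d^E_\maxx\hgtensor d^F_\maxx)^*=(d^E_\maxx)^*\hgtensor(d^F_\maxx)^*$ for the sum-type differential of \cref{def:product_hilbert_complex}. This is not sign bookkeeping. The duality move converts the hard inclusion into exactly the assertion that $\dom{(d^E_\maxx)^*}\gtensor\dom{(d^F_\maxx)^*}$ is a \emph{core} for $(d^E_\maxx\hgtensor d^F_\maxx)^*$, i.e.\ an upper bound on the adjoint; the elementary integration by parts only gives the lower bound $\supseteq$, which is the easy inclusion for the adjoint complexes in disguise. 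As written, the difficulty has been relocated rather than resolved. The missing lemma is a purely abstract statement about Hilbert complexes and can be closed using \cref{laplacian_of_product_hilbert_complex}: since $(\dom{\Delta}\gtensor\dom{\Delta'})_i$ is a core for $\widetilde\Delta_i$, it is a core for $\widetilde\Delta_i^{1/2}$, and the identity $\|\widetilde\Delta_i^{1/2}x\|^2=\|(d\hgtensor d')_ix\|^2+\|(d\hgtensor d')_{i-1}^*x\|^2$ on $\dom{\widetilde\Delta_i^{1/2}}=\dom{(d\hgtensor d')_i}\cap\dom{(d\hgtensor d')_{i-1}^*}$, together with the standard fact (via the weak Hodge decomposition) that this intersection is a core for $(d\hgtensor d')_{i-1}^*$, shows that $(\dom{\Delta}\gtensor\dom{\Delta'})_i\subseteq(\dom{d^*}\gtensor\dom{d'^*})_i$ is already a core for $(d\hgtensor d')_{i-1}^*$, on which the algebraic formula $\bigoplus_{j+k=i}\big(d_{j-1}^*\otimes\id{H'_k}+\sigma_j\otimes d_{k-1}'^*\big)$ holds. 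Supplying this argument (or citing the corresponding statement in \cite{Bruening1992}) completes your proof; your mollification fallback is rightly distrusted, since Friedrichs smoothing need not preserve membership in the maximal domain on an incomplete manifold, which is the very point the duality route is designed to avoid.
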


In particular, \cref{maximal_extensions_of_differential_complexes_product} implies that the \emph{Gaffney extension} of the $d^{E\getensor F}$-Laplacian, which is the Laplacian of the Hilbert complex $(L^2(M\times N,(E\getensor F)_\bullet),d^{E\getensor F}_w)$ and is defined by
$$ \Delta^{E\getensor F}_G \coloneqq (d^{E\getensor F}_\maxx)^* d^{E\getensor F}_\maxx + d^{E\getensor F}_\maxx (d^{E\getensor F}_\maxx)^* $$
with domain
\begin{multline*}
 \dom{\Delta^{E\getensor F}_G} \coloneqq \big\{x \in \dom{d^{E\getensor F}_\maxx} \cap \dom{(d^{E\getensor F}_\maxx)^*} : \\ d^{E\getensor F}_\maxx x \in \dom{(d^{E\getensor F}_\maxx)^*} \text{ and } (d^{E\getensor F}_\maxx)^*x \in \dom{d^{E\getensor F}_\maxx} \big\},
\end{multline*}
is a self-adjoint extension of the $d^{E\getensor F}$-Laplacian on $L^2(M\times N,(E\getensor F)_\bullet)$ that is unitarily equivalent to the Laplacian of the Hilbert complex $(L^2(M,E_\bullet) \hgtensor L^2(N,F_\bullet), d^E_\maxx \hgtensor d^F_\maxx)$.
As a consequence, the two Laplacians share all of their spectral and operator theoretic properties.

\section{Applications to the \texorpdfstring{$\dbar$}{dbar}-complex}\label{sec:dbar}

We will now apply the general theory developed in the previous sections to the $\dbar$-Neumann problem.
For a Hermitian manifold $X$ and a Hermitian holomorphic vector bundle $E$ over $X$, we consider, for fixed $1 \leq p \leq \dim[\mathbb C]{X}$, the complex of differential operators%
\footnote{We will usually omit the reference to the bidegree in $\dbar^E_{p,q}$.}
\begin{equation}\label{eq:dbar_operator}
 \dbar^E_{p,q} \colon \Omega^{p,q}_c(X,E) \to \Omega^{p,q+1}_c(X,E)
\end{equation}
and its Laplacians
$$ \square^E_{p,q} \coloneqq \dbar^{E,t}\dbar^E + \dbar^E\dbar^{E,t} \colon \Omega^{p,q}_c(X,E) \to \Omega^{p,q}_c(X,E), $$
where $\dbar^{E,t}$ is the formal adjoint to $\dbar^E$.
Details on the definition and properties of $\dbar^E$ can be found, for instance, in \cite{Wells1973,Ma2007,Huybrechts2005,Demailly2012,Demailly2013}.
For extensive surveys of the $L^2$ theory of $\dbar$, with a focus on pseudoconvex domains in $\mathbb C^n$, see \cite{Straube2010,Chen2001}.
The \emph{Gaffney extension} of $\square^E_{p,q}$, which we still denote by $\square^E_{p,q}$, is given by
$$ \square^E_{p,q} \coloneqq \dbar^{E,*} \dbar^E + \dbar^E \dbar^{E,*} \colon \dom{\square^E_{p,q}} \subseteq L^2_{p,q}(X,E) \to L^2_{p,q}(X,E) $$
with domain
$$ \dom{\square^E_{p,q}} \coloneqq \big\{\omega \in \dom{\dbar^E} \cap \dom{\dbar^{E,*}} : \dbar^E \omega \in \dom{\dbar^{E,*}} \text{ and } \dbar^{E,*}\omega \in \dom{\dbar^E}\big\}, $$
where $\dbar^E$ is now understood as the maximal extension of \cref{eq:dbar_operator} to a closed operator from $L^2_{p,q}(X,E)$ to $L^2_{p,q+1}(X,E)$, see \cref{eq:maximal_extension_of_differential_operator_domain}, and $\dbar^{E,*}$ is its Hilbert space adjoint.
Here, $L^2_{p,q}(X,E) \coloneqq L^2(X,\Lambda^{p,q}T^*X \otimes E)$ denotes the space of square-integrable $(p,q)$ forms on $X$ with values in $E$.
In this way, we obtain a Hilbert complex $(L^2_{p,\bullet}(X,E),\dbar^E_{p,\bullet})$ with Laplacian $\square^E_{p,\bullet}$ for every $1 \leq p \leq \dim[\mathbb C]{X}$.

The inverse of $\square^E$, in the sense of \cref{inverse_of_laplacian_properties}, is customarily called the \emph{$\dbar$-Neumann operator} and denoted by $N^E$.
We denote by $N^E_{p,q}$ and $S^E_{p,q}$ the restrictions of $N^E$ and $S^E$, respectively, to $L^2_{p,q}(X,E)$.
By \cref{equivalent_statements_to_N_bounded}, $N_{p,q}$ is bounded if and only if $\dbar^E$ on both $(p,q-1)$ and $(p,q)$ forms has closed range.
In this case, the minimal (or canonical) solution operator $S^E$ to the $\dbar^E$-equation is also bounded on $L^2_{p,q}(X,E)$ and on $L^2_{p,q+1}(X,E)$, and we have
$$ S^E = \dbar^{E,*} N^E $$
on $L^2_{p,q}(X,E)$ by \cref{inverse_of_laplacian_properties_bounded}.
The cohomology of the Hilbert complex $(L^2_{p,\bullet}(X,E),\dbar^E)$ is the \emph{$L^2$-Dolbeault cohomology},
$$ \ltwocohom[p,q]{X,E} \coloneqq \cohom[q]{L^2_{p,\bullet}(X,E),\dbar^E} = \ker(\dbar^E) \cap L^2_{p,q}(X,E)\mathbin{\big/}\img{\dbar^E}\cap L^2_{p,q}(X,E), $$
and its reduced cohomology is the \emph{reduced $L^2$-Dolbeault cohomology},
$$ \redltwocohom[p,q]{X,E} \coloneqq \redcohom[q]{L^2_{p,\bullet}(X,E),\dbar^E} = \ker(\dbar^E) \cap L^2_{p,q}(X,E)\mathbin{\big/} \overline{\img{\dbar^E}\cap L^2_{p,q}(X,E)}, $$
which is canonically isomorphic to $\ker(\square^E_{p,q})$.
For instance,
\begin{equation}\label{eq:bergman_space_cohomology}
 \bergman(X,E) \coloneqq \ker(\dbar^E) \cap L^2(X,E) = \ker(\dbar^{E,*}\dbar^E) \cap L^2(X,E) \cong \redltwocohom[0,0]{X,E}
\end{equation}
is the space of square-integrable holomorphic sections of $E$, called the \emph{Bergman space} of $E \to X$.
Of course, both cohomology spaces coincide if $\dbar^E$ has closed range.
Our main result for this section is the following:

\begin{thm}\label{essential_spectrum_of_product_dbar}
 Let $E \to X$ and $F \to Y$ be Hermitian holomorphic vector bundles over Hermitian manifolds.
 Then, for $0 \leq p,q \leq \dim[\mathbb C]{X} + \dim[\mathbb C]{Y}$,
 \begin{equation}\label{eq:spectrum_of_product_dbar}
  \spec{\square^{E\boxtimes F}_{p,q}} = \bigcup_{\substack{p'+p''=p \\ q'+q''=q}} \big(\spec{\square^E_{p',q'}} + \spec{\square^F_{p'',q''}}\big)
 \end{equation}
 and
 \begin{equation}\label{eq:essential_spectrum_of_product_dbar}
  \essspec{\square^{E\boxtimes F}_{p,q}} = \bigcup_{\substack{p'+p''=p \\ q'+q''=q}} \big(\essspec{\square^E_{p',q'}} + \spec{\square^F_{p'',q''}}\big)\cup\big(\spec{\square^E_{p',q'}} + \essspec{\square^F_{p'',q''}}\big),
 \end{equation}
 where $p'$ and $q'$ range over $\{0,\dotsc,\dim[\mathbb C]{X}\}$, and $p''$ and $q''$ range over $\{0,\dotsc,\dim[\mathbb C]{Y}\}$.
\end{thm}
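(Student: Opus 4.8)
The plan is to deduce this from the abstract \cref{intro_spectrum_and_essspec_of_product_laplacian} by combining the holomorphic bidegree decomposition of \cref{ex:dolbeault_complex} with the unitary equivalence of \cref{maximal_extensions_of_differential_complexes_product}. The first step is to split off the holomorphic bidegree. The bundle isomorphism in \cref{ex:dolbeault_complex} gives an orthogonal decomposition of the space of $L^2$ $(p,q)$ forms on $X\times Y$ into sectors indexed by $p'+p''=p$, namely the $L^2$-completions of the spaces $\Omega_c(E,F)_q^{p',p''}$. Since $\dbar^{E\boxtimes F}$ differentiates only in the antiholomorphic variables, it preserves each sector, so the whole Hilbert complex $(L^2_{p,\bullet}(X\times Y,E\boxtimes F),\dbar^{E\boxtimes F})$ is the direct sum, over $p'+p''=p$, of the subcomplexes supported in a single sector. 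Consequently $\square^{E\boxtimes F}_{p,q}$ is the orthogonal direct sum of its restrictions $\square^{p',p''}_{q}$ to the sectors.

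The second step is to recognize each sector as a tensor product. By \cref{ex:dolbeault_complex}, the restriction of $\dbar^{E\boxtimes F}$ to the $(p',p'')$-sector is exactly the complex of differential operators extending $\dbar^E_{p',\bullet}\getensor(-1)^{p'}\dbar^F_{p'',\bullet}$. Applying \cref{maximal_extensions_of_differential_complexes_product} to the complexes $\dbar^E_{p',\bullet}$ and $(-1)^{p'}\dbar^F_{p'',\bullet}$ identifies this sector, as a Hilbert complex of maximal extensions, unitarily with the tensor product $(L^2_{p',\bullet}(X,E),\dbar^E)\hgtensor(L^2_{p'',\bullet}(Y,F),(-1)^{p'}\dbar^F)$. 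By \cref{laplacian_of_product_hilbert_complex}, the Laplacian of a tensor product complex depends only on the Laplacians of the two factors; since multiplying $\dbar^F$ by the unimodular constant $(-1)^{p'}$ leaves $\square^F$ unchanged, the Laplacian of this sector is unitarily equivalent to that of $(L^2_{p',\bullet}(X,E),\dbar^E)\hgtensor(L^2_{p'',\bullet}(Y,F),\dbar^F)$, whose factor Laplacians are $\square^E_{p',\bullet}$ and $\square^F_{p'',\bullet}$.

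The third step applies the abstract theorem and reassembles. \cref{intro_spectrum_and_essspec_of_product_laplacian} gives, in degree $q$,
\begin{equation*}
 \spec{\square^{p',p''}_{q}} = \bigcup_{q'+q''=q}\big(\spec{\square^E_{p',q'}} + \spec{\square^F_{p'',q''}}\big)
\end{equation*}
together with the analogous identity for $\essspec{\square^{p',p''}_{q}}$. Because the spectrum (resp.\ essential spectrum) of a finite orthogonal direct sum of self-adjoint operators is the union of the spectra (resp.\ essential spectra) of the summands, taking the union over all $p'+p''=p$ yields \cref{eq:spectrum_of_product_dbar,eq:essential_spectrum_of_product_dbar}.

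I expect the only real obstacle to be verifying that the holomorphic-bidegree decomposition persists at the level of the \emph{maximal} extensions: one must check that the orthogonal sector decomposition of the smooth compactly supported forms is compatible with the distributional domains, so that the maximal extension of $\dbar^{E\boxtimes F}$, and hence the Gaffney Laplacian, are genuinely block diagonal with no cross-terms between distinct $(p',p'')$ sectors. Granting this compatibility, which holds because $\dbar^{E\boxtimes F}$ never alters the holomorphic type, the sign bookkeeping and the passage from a single sector to the full direct sum are routine.
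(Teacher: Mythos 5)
Your proposal is correct and follows essentially the same route as the paper's proof: decompose $L^2_{p,\bullet}(X\times Y, E\boxtimes F)$ into the orthogonal $(p',p'')$-sectors, identify each sector via \cref{ex:dolbeault_complex} and \cref{maximal_extensions_of_differential_complexes_product} with the tensor product complex $(L^2_{p',\bullet}(X,E),\dbar^E)\hgtensor(L^2_{p'',\bullet}(Y,F),(-1)^{p'}\dbar^F)$, note that the sign $(-1)^{p'}$ does not affect the Laplacian, and apply \cref{intro_spectrum_and_essspec_of_product_laplacian} together with the behaviour of (essential) spectra under finite direct sums. The compatibility issue you flag at the end is exactly the point the paper also relies on (implicitly, via the orthogonality of distinct bidegrees and the fact that $\dbar^{E\boxtimes F}$ preserves holomorphic type distributionally), so there is no gap.
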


\begin{proof}
 Fix $p'$ and $p''$ for the moment and denote by $L^2(E,F)_q^{p',p''}$ the completion of $\Omega_c(E,F)_q^{p',p''}$ as in \cref{ex:dolbeault_complex}, with respect to the induced Hermitian structures.
 Consider the Hilbert complex $(L^2(E,F)_\bullet^{p',p''},\dbar^{E\boxtimes F})$, obtained by taking the maximal extension of $\dbar^{E\boxtimes F}$, restricted to $\Omega_c(E,F)_\bullet^{p',p''}$, to a closed operator on $L^2(E,F)_q^{p',p''}$.
 By \cref{unique_differential_operator_on_product_bundle,ex:dolbeault_complex}, we know that this Hilbert complex is unitarily equivalent to
 $$ \big(L^2_{p',\bullet}(X,E)\hgtensor L^2_{p'',\bullet}(Y,F),\dbar^E_{p',\bullet}\hgtensor (-1)^{p'}\dbar^F_{p'',\bullet}\big), $$
 which is the tensor product of $(L^2_{p',\bullet}(X,E),\dbar^E_{p',\bullet})$ and $(L^2_{p'',\bullet}(Y,F),(-1)^{p'}\dbar^F_{p'',\bullet})$, as in \cref{def:product_hilbert_complex}.
 Now for $0 \leq p,q \leq \dim[\mathbb C]{X} + \dim[\mathbb C]{Y}$, we have
 $$ L^2_{p,q}(X\times Y,E\boxtimes F) \cong \bigoplus_{p'+p''=p} L^2(E,F)_q^{p',p''}, $$
 which is due to the fact that $X\times Y$ is Hermitian and hence forms with different bidegree (but same total degree) are orthogonal.
 It follows that $(L^2_{p,\bullet}(X\times Y,E\boxtimes F),\dbar^{E\boxtimes F})$ is unitarily equivalent to the direct sum of Hilbert complexes
 \begin{equation}\label{eq:direct_sum_of_dbar_hilbert_complexes_bidegree}
  \bigoplus_{p'+p''=p} \big(L^2_{p',\bullet}(X,E), \dbar^E_{p',\bullet}\big) \hgtensor \big(L^2_{p'',\bullet}(Y,F), (-1)^{p'}\dbar^F_{p'',\bullet}\big).
 \end{equation}
 Equations \cref{eq:spectrum_of_product_dbar,eq:essential_spectrum_of_product_dbar} now follow immediately from \cref{eq:spectrum_of_product_laplacian,eq:essential_spectrum_of_product_laplacian,eq:direct_sum_of_dbar_hilbert_complexes_bidegree}.
 Note that the Laplacians of $(L^2_{p'',\bullet}(Y,F), (-1)^{p'}\dbar^F_{p'',\bullet})$ and $(L^2_{p'',\bullet}(Y,F), \dbar^F_{p'',\bullet})$ coincide and are equal to $\square^F_{p'',\bullet}$.
\end{proof}

Since the $\dbar$-complex is nondegenerate in the sense of \cref{def:nondegenerate_hilbert_complex}, we obtain the following characterization of compactness of the $\dbar$-Neumann operator from \cref{characterization_of_compactness_of_S_for_product_complex}:

\begin{thm}\label{dbar_neumann_compactness_product}
 Let $E \to X$ and $F \to Y$ be Hermitian holomorphic vector bundles over Hermitian manifolds such that $\dbar^E$ and $\dbar^F$ have closed range (in all bidegrees).
 Then for $0 \leq p,q \leq \dim[\mathbb C]{X} + \dim[\mathbb C]{Y}$, the following are equivalent:
 \begin{enumerate}
  \item The $\dbar$-Neumann operator
   $N_{p,q}^{E\boxtimes F} \colon L^2_{p,q}(X\times Y,E\boxtimes F) \to L^2_{p,q}(X\times Y,E\boxtimes F)$
   is compact.
  \item $\essspec{\square^{E\boxtimes F}_{p,q}} = \emptyset$.
  \item $\essspec{\square^E_{p',q'}} = \essspec{\square^F_{p'',q''}} = \emptyset$ for all $0 \leq p',q' \leq \dim[\mathbb C]{X}$ and $0\leq p'',q'' \leq \dim[\mathbb C]{Y}$ with $p'+p''=p$ and $q'+q''=q$.
  \item For all $0 \leq p',q' \leq \dim[\mathbb C]{X}$ and $0\leq p'',q'' \leq \dim[\mathbb C]{Y}$ with $p'+p''=p$ and $q'+q''=q$, the $L^2$-Dolbeault cohomology spaces
   $$ \ltwocohom[p',q']{X,E} \quad\text{and}\quad \ltwocohom[p'',q'']{Y,F} $$
   have finite dimension and the $\dbar$-Neumann operators
   $$ N^E_{p',q'} \colon L^2_{p',q'}(X,E) \to L^2_{p',q'}(X,E) \quad\text{and}\quad N^F_{p'',q''} \colon L^2_{p'',q''}(Y,F) \to L^2_{p'',q''}(Y,F) $$
   are compact.
 \end{enumerate}
\end{thm}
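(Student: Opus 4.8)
The plan is to deduce the theorem directly from the abstract characterization in \cref{characterization_of_compactness_of_S_for_product_complex}, fed by the unitary equivalence established in the proof of \cref{essential_spectrum_of_product_dbar}. The first step is to recall from there that the Hilbert complex $(L^2_{p,\bullet}(X\times Y,E\boxtimes F),\dbar^{E\boxtimes F})$ is unitarily equivalent to the finite direct sum \cref{eq:direct_sum_of_dbar_hilbert_complexes_bidegree}, whose summand indexed by $(p',p'')$ with $p'+p''=p$ is the tensor product of $(L^2_{p',\bullet}(X,E),\dbar^E_{p',\bullet})$ and $(L^2_{p'',\bullet}(Y,F),(-1)^{p'}\dbar^F_{p'',\bullet})$. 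Under this equivalence $N^{E\boxtimes F}_{p,q}$ corresponds, in total degree $q$, to the direct sum over $p'+p''=p$ of the operators $N_q$ of the individual summands. Since a finite direct sum of operators is compact exactly when each summand is, and since the essential spectrum of such a direct sum is the union of the essential spectra, it suffices to analyse each summand separately and then take unions over $p'+p''=p$.

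Before invoking \cref{characterization_of_compactness_of_S_for_product_complex} I would verify its hypotheses for each summand. Closed range of the two factor complexes holds by assumption; the scalar $(-1)^{p'}$ twisting $\dbar^F$ is inert, since multiplying a differential by a unit alters neither its range, nor its Laplacian, nor its (reduced) cohomology, so that $(L^2_{p'',\bullet}(Y,F),(-1)^{p'}\dbar^F_{p'',\bullet})$ has Laplacian $\square^F_{p'',\bullet}$ and closed range. Both factors are nondegenerate in the sense of \cref{def:nondegenerate_hilbert_complex}, as already noted before the theorem: on a positive-dimensional complex manifold the Dolbeault differential $\dbar_q$ is nonzero whenever its source and target are nonzero, so every degree of the support carries a nonzero incoming or outgoing differential.

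With the hypotheses in place, \cref{characterization_of_compactness_of_S_for_product_complex} applies to each summand and yields, for fixed $p',p''$ with $p'+p''=p$, the equivalence of: compactness of the summand's operator $N_q$; emptiness of the essential spectrum of the summand's degree-$q$ Laplacian; the condition $\essspec{\square^E_{p',q'}} = \essspec{\square^F_{p'',q''}} = \emptyset$ for all $q'+q''=q$; and finite-dimensionality of $\ltwocohom[p',q']{X,E}$ and $\ltwocohom[p'',q'']{Y,F}$ together with compactness of $N^E_{p',q'}$ and $N^F_{p'',q''}$. Here the closed range hypothesis identifies the $L^2$-Dolbeault cohomology with $\ker\square$, matching the cohomological clause of that theorem. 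Taking unions over all admissible $p'+p''=p$ then reassembles these summand-wise equivalences into the four global conditions (i)--(iv).

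I expect the main difficulty to be organizational rather than analytic, since the substantive content is already carried by \cref{characterization_of_compactness_of_S_for_product_complex} and \cref{essential_spectrum_of_product_dbar}. The principal care is needed in the bookkeeping of the double sum over bidegrees $p'+p''=p$ and $q'+q''=q$, so that the per-summand equivalences patch together correctly into (i)--(iv), and in confirming that the sign $(-1)^{p'}$ is genuinely inert for every quantity entering (iii) and (iv).
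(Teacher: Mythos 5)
Your proposal is correct and follows essentially the same route as the paper, which obtains this theorem directly from \cref{characterization_of_compactness_of_S_for_product_complex} applied to the direct-sum decomposition \cref{eq:direct_sum_of_dbar_hilbert_complexes_bidegree} established in the proof of \cref{essential_spectrum_of_product_dbar}, using nondegeneracy of the Dolbeault complex. Your additional checks (inertness of the sign $(-1)^{p'}$, compatibility of compactness and essential spectra with finite direct sums, identification of $L^2$-cohomology with $\ker\square$ under the closed range hypothesis) are exactly the bookkeeping the paper leaves implicit.
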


\begin{cor}\label{bergman_space_infinite_means_noncompactness}
 Let $E \to X$ and $F \to Y$ be Hermitian holomorphic vector bundles over complex manifolds such that $\dbar^E$ and $\dbar^F$ have closed range (in all bidegrees).
 \begin{enumerate}
  \item
   If the $L^2$-Dolbeault cohomology space $\ltwocohom[p,q]{X\times Y,E\boxtimes F}$ has infinite dimension, then
   $$ N^{E\boxtimes F}_{p,q} \colon L^2_{p,q}(X\times Y,E\boxtimes F) \to L^2_{p,q}(X\times Y,E\boxtimes F) $$
   is not compact.
  \item 
   If either of the Bergman spaces
   $$ \bergman(X,E) = L^2(X,E) \cap \hol{X,E} \quad\text{or}\quad \bergman(Y,F) = L^2(Y,F) \cap \hol{Y,F} $$
   of holomorphic $L^2$ sections of $E$, respectively $F$, has infinite dimension, then $N^{E\boxtimes F}_{p,q}$
   is not compact for all $0 \leq p,q \leq \dim[\mathbb C]{Y}$, respectively $0 \leq p,q \leq \dim[\mathbb C]{X}$.
 \end{enumerate}
\end{cor}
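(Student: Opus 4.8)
The plan is to deduce both parts from the already established \cref{solution_operator_noncompact_product_inf_dim_cohom}; the actual content is translating the two hypotheses into statements about the factor complexes $(L^2_{p',\bullet}(X,E),\dbar^E)$ and $(L^2_{p'',\bullet}(Y,F),\dbar^F)$. The essential structural input is \cref{essential_spectrum_of_product_dbar} (via \cref{ex:dolbeault_complex}): for each admissible $p$, the complex $(L^2_{p,\bullet}(X\times Y,E\boxtimes F),\dbar^{E\boxtimes F})$ is unitarily equivalent to the \emph{finite} direct sum \cref{eq:direct_sum_of_dbar_hilbert_complexes_bidegree} of tensor product complexes indexed by $p'+p''=p$, and hence $N^{E\boxtimes F}_{p,q}$ decomposes accordingly as a finite direct sum. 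Since a finite direct sum of operators is compact if and only if each summand is, it suffices in both parts to exhibit a single noncompact summand. Because $\dbar^E$ and $\dbar^F$ have closed range, item~\cref{item:laplacian_of_product_hilbert_complex_closed_range} of \cref{laplacian_of_product_hilbert_complex} shows each tensor product summand again has closed range, and the $\dbar$-complexes are nondegenerate in the sense of \cref{def:nondegenerate_hilbert_complex}, so \cref{solution_operator_noncompact_product_inf_dim_cohom} applies to every summand.

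For part~(i), I would first invoke closed range of $\dbar^{E\boxtimes F}$ to identify $\ltwocohom[p,q]{X\times Y,E\boxtimes F}$ with the harmonic space $\ker(\square^{E\boxtimes F}_{p,q})$, and then split this space along \cref{eq:direct_sum_of_dbar_hilbert_complexes_bidegree} into the degree-$q$ cohomologies of the individual tensor product summands. If the total is infinite-dimensional, then at least one summand, for some fixed $p',p''$ with $p'+p''=p$, has infinite-dimensional cohomology in degree $q$; applying part~(i) of \cref{solution_operator_noncompact_product_inf_dim_cohom} to that tensor product yields a noncompact $N$, whence $N^{E\boxtimes F}_{p,q}$ is noncompact.

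For part~(ii), the key observation is that the Bergman space lives in bidegree $(0,0)$: by \cref{eq:bergman_space_cohomology} and closed range we have $\bergman(X,E)\cong\ker(\square^E_{0,0})=\cohom[0]{L^2_{0,\bullet}(X,E),\dbar^E}$. Thus if $\bergman(X,E)$ is infinite-dimensional, the factor complex $(L^2_{0,\bullet}(X,E),\dbar^E)$ has infinite-dimensional cohomology in degree $j=0$. For each $0\le p\le\dim[\mathbb C]{Y}$ I would single out the $p'=0$, $p''=p$ summand of \cref{eq:direct_sum_of_dbar_hilbert_complexes_bidegree}, namely the tensor product of $(L^2_{0,\bullet}(X,E),\dbar^E)$ with $(L^2_{p,\bullet}(Y,F),\dbar^F)$, and apply part~(ii) of \cref{solution_operator_noncompact_product_inf_dim_cohom} with $j=0$. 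Since $\supp(L^2_{p,\bullet}(Y,F),\dbar^F)=\{0,\dots,\dim[\mathbb C]{Y}\}$, this forces noncompactness of that summand in every total degree $q=0+q''$ with $0\le q\le\dim[\mathbb C]{Y}$, and hence of $N^{E\boxtimes F}_{p,q}$, for all $0\le p,q\le\dim[\mathbb C]{Y}$. The assertion for $\bergman(Y,F)$ follows by interchanging the roles of $X$ and $Y$.

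The only real subtlety, and the point I would be most careful about, is the bookkeeping of bidegrees. One must verify that forcing $p'=q'=0$, which is what lets the degree-$(0,0)$ datum $\bergman(X,E)$ enter, is precisely what pins $p=p''$ and $q=q''$ into the range $0\le p,q\le\dim[\mathbb C]{Y}$, and that the chosen summand genuinely occurs in \cref{eq:direct_sum_of_dbar_hilbert_complexes_bidegree} (it does, as $p''=p\in\{0,\dots,\dim[\mathbb C]{Y}\}$). Everything else is a direct appeal to the cited results, with no new analytic estimate required.
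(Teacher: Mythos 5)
Your proposal is correct and follows essentially the same route as the paper: the paper's own proof is a one-line appeal to \cref{solution_operator_noncompact_product_inf_dim_cohom} together with the identification $\bergman(X,E)\cong\ltwocohom[0,0]{X,E}$ from \cref{eq:bergman_space_cohomology}, exactly the two ingredients you use. The only difference is that you spell out the bookkeeping the paper leaves implicit in the word \enquote{immediate}, namely the reduction via the finite direct sum \cref{eq:direct_sum_of_dbar_hilbert_complexes_bidegree} to a single tensor-product summand, and that bookkeeping is carried out correctly.
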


\begin{proof}
 This is immediate from \cref{solution_operator_noncompact_product_inf_dim_cohom} by using $\bergman(X,E) = \ker(\square^E_0) \cong \ltwocohom[0,0]{X,E}$ as in \cref{eq:bergman_space_cohomology}.
\end{proof}

\begin{rem}
 \begin{inlineenum}
  \item
  We can use higher degree $L^2$-Dolbeault cohomology spaces of one factor instead of the Bergman spaces as in \cref{bergman_space_infinite_means_noncompactness} to conclude non-compactness, see \cref{solution_operator_noncompact_product_inf_dim_cohom}.
  
  \item
  The above results also apply when replacing $\dbar^E$ by the minimal (or strong) extensions (i.e., the closure) of $\dbar^E \colon \Omega^{p,q}_c(X,E) \to \Omega^{p,q+1}_c(X,E)$, and similarly for $\dbar^F$.
  This follows immediately from the fact that \cref{maximal_extensions_of_differential_complexes_product} also holds for the minimal extensions of differential operators.
 \end{inlineenum}
\end{rem}

 \begin{rem}
   Regarding the closed range property for $\dbar^E$, one has the following sufficient condition from \cite[Theorem~3.1.8]{Ma2007}:
   Assume there is a compact subset $K \subseteq X$ and $C > 0$ such that
   \begin{equation}\label{eq:dbar_closed_range_fundamental_estimate}
    \|u\|^2 \leq C\big( \|\dbar^E u\|^2 + \|\dbar^{E,*}u\|^2\big) + \int_K |u|^2\,dv_X
   \end{equation}
   is satisfied for all $u \in \dom{\dbar^E}\cap\dom{\dbar^{E,*}}\cap L^2_{p,q}(X,E)$, where $v_X$ denotes the measure induced by the Riemannian volume form on $X$.
   Then $\dbar^E_{p,q}$ has closed range and the $L^2$-Dolbeault cohomology space $\ltwocohom[p,q]{X,E}$ has finite dimension.
   One situation where this is satisfied is when the given metric on $X$ is complete and the self-adjoint bundle endomorphism
   $$ A_{E,\omega} \coloneqq [i\Theta(E),\Lambda] + T_\omega \colon \Lambda^{p,q}T^*X \otimes E \to \Lambda^{p,q}T^*X \otimes E $$
   satisfies $A_{E,\omega} \geq c$ for some $c > 0$ at infinity, i.e., outside some compact subset of $X$.
   Here, $\Theta(E)$ is the curvature of $E$ for the Chern connection, $\omega$ denotes the $(1,1)$-form associated to the metric, $\Lambda$ is the adjoint of exterior multiplication by $\omega$, and $T_\omega$ is a torsion term which vanishes if $X$ is K\"ahler.
   Indeed, in this case, the \emph{Bochner-Kodaira-Nakano} inequality
   $$ \|\dbar^E u\|^2 + \|\dbar^{E,*}u\|^2 \geq \int_X \langle A_{E,\omega}u,u \rangle\,dv_X $$
   extends to all $u \in \dom{\dbar^E}\cap\dom{\dbar^{E,*}}\cap L^2_{p,q}(X,E)$ since $C^\infty_c(X,\Lambda^{p,q}T^*X\otimes E)$ is dense in the former space for the norm $u \mapsto \|u\|^2 + \|\dbar^E u\|^2 + \|\dbar^{E,*}u\|^2$ by completeness, see \cite{Andreotti1965} or \cite[Lemma~3.3.1]{Ma2007}.
   Using the positivity condition on $A_{E,\omega}$ at infinity, one easily arrives at \eqref{eq:dbar_closed_range_fundamental_estimate}.
   For more information on $A_{E,\omega}$ and the Bochner-Kodaira-Nakano identity, consult \cite{Demailly1986,Demailly2012,Bertin2002}.
 \end{rem}

The statement of \cref{essential_spectrum_of_product_dbar} can readily be generalized to the product of a finite number of manifolds and vector bundles.
We will conclude this section by considering the situation of several one-dimensional factors.
For simplicity, we will only treat $(0,q)$ forms, and we abbreviate
$$ \square^E_q \coloneqq \square^E_{0,q}, \quad S^E_q \coloneqq S^E_{0,q}, \quad\text{and}\quad N^E_q \coloneqq N^E_{0,q}. $$

\begin{thm}\label{dbar_neumann_noncompactness_riemann_surface_products}
 Let $X_j$, $1 \leq j \leq n$ with $n \geq 2$ be Hermitian Riemann surfaces (i.e., one-dimensional complex manifolds) and $E_j \to X_j$ Hermitian holomorphic vector bundles, such that $\dbar^{E_j}$ has closed range for all $j$.
 Put
 $$ X \coloneqq X_1 \times \dotsb \times X_n \quad\text{and}\quad E \coloneqq \pi_1^* E_1 \otimes \dotsb \otimes \pi_n^* E_n, $$
 with $\pi_j \colon X \to X_j$ the projections.
 \begin{enumerate}
  \item\label{item:dbar_neumann_noncompactness_riemann_surface_products_compactness_of_bot}
   The operator $N^E_0$ is compact if and only if, for all $1 \leq j \leq n$, the minimal solution operator $S^{E_j}_1$ is compact and $\dim{\bergman(X_j,E_j)} < \infty$.
  \item\label{item:dbar_neumann_noncompactness_riemann_surface_products_compactness_of_top}
   The operator $N^E_n$ is compact if and only if, for all $1 \leq j \leq n$, the minimal solution operator $S^{E_j}_1$ is compact and $\dim{\ltwocohom[0,1]{X_j,E_j}} < \infty$.
  \item\label{item:dbar_neumann_noncompactness_riemann_surface_products_compactness_in_middle}
   The operator $N^E_q$ with $q \in \{1,\dotsc,n-1\}$ is compact if and only if both $N^E_0$ and $N^E_n$ are compact.
   (Equivalently: $S^{E_j}_1$ is compact for all $1 \leq j \leq n$ and all factors have finite dimensional $L^2$-Dolbeault cohomology.)
  \item\label{item:dbar_neumann_noncompactness_riemann_surface_products_noncompactness_of_bot}
   If $N^E_0$ is not compact, then $N^E_q$ is also not compact for $q \in \{0,\dotsc,n-1\}$.
  \item\label{item:dbar_neumann_noncompactness_riemann_surface_products_noncompactness_of_top}
   If $N^E_n$ is not compact, then $N^E_q$ is also not compact for $q \in \{1,\dotsc,n\}$.
  \item\label{item:dbar_neumann_noncompactness_riemann_surface_products_compactness_boils_up}
   If $N^E_{q_0}$ is not compact for some $q_0 \in \{1,\dotsc,n-1\}$, then $N^E_q$ is also not compact for all $q \in \{1,\dotsc,n-1\}$.
  \item\label{item:dbar_neumann_noncompactness_riemann_surface_products_noncompactness_of_S}
   If $S^{E_j}_1$ is not compact for some $1 \leq j \leq n$, then $N^E_q$ is not compact for all $q \in \{0,\dotsc,n\}$.
  \item\label{item:dbar_neumann_noncompactness_riemann_surface_products_bergman_space_one_factor}
   If there exists $j_0 \in \{1,\dotsc,n\}$ such that the Bergman space $\bergman(X_{j_0},E_{j_0})$ has infinite dimension, then $N^E_q$ is not compact for all $0 \leq q \leq n-1$.
 \end{enumerate}
\end{thm}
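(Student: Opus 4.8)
The plan is to isolate a single reduction lemma---that $N^E_q$ is compact exactly when every factor Laplacian which can occur in total degree $q$ has empty essential spectrum---and then read off all eight assertions from the combinatorics of admissible bidegrees. The starting point is that each $X_j$ is one-dimensional, so $(L^2_{0,\bullet}(X_j,E_j),\dbar^{E_j})$ is supported in degrees $\{0,1\}$ and has only the two Laplacians $\square^{E_j}_0,\square^{E_j}_1$. Since $S^{E_j}_0=0$ and $S^{E_j}_2=0$ (there is nothing in degree $-1$ and no $(0,2)$-forms on a curve), \cref{characterization_of_compactness_of_S} shows that $N^{E_j}_0$ and $N^{E_j}_1$ are each compact iff $S^{E_j}_1$ is compact. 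Feeding this into item~\cref{item:N_bounded_zero_in_essential_spectrum} of \cref{inverse_of_laplacian_properties_bounded}, together with $\ker(\square^{E_j}_0)\cong\bergman(X_j,E_j)$ and $\ker(\square^{E_j}_1)\cong\ltwocohom[0,1]{X_j,E_j}$, yields the dictionary: $\essspec{\square^{E_j}_0}=\emptyset$ iff $S^{E_j}_1$ is compact and $\dim{\bergman(X_j,E_j)}<\infty$, while $\essspec{\square^{E_j}_1}=\emptyset$ iff $S^{E_j}_1$ is compact and $\dim{\ltwocohom[0,1]{X_j,E_j}}<\infty$.

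For the reduction lemma I would invoke the evident $n$-fold generalization of \cref{essential_spectrum_of_product_dbar,dbar_neumann_compactness_product}, obtained by iterating the binary tensor product: the complex $(L^2_{0,\bullet}(X,E),\dbar^E)$ is the $n$-fold tensor product of the factor complexes, its degree-$q$ part splits over multidegrees $(q_1,\dots,q_n)\in\{0,1\}^n$ with $\sum_j q_j=q$, and $\essspec{\square^E_q}$ is the union over these multidegrees and over $j$ of the Minkowski sums $\sum_{i\neq j}\spec{\square^{E_i}_{q_i}}+\essspec{\square^{E_j}_{q_j}}$. Because $n\geq 2$ and the $\dbar$-complexes are nondegenerate, \cref{hilbert_complex_nondegeneracy} forces each $\spec{\square^{E_i}_{q_i}}$ to contain a strictly positive number, so one such Minkowski sum is empty precisely when $\essspec{\square^{E_j}_{q_j}}=\emptyset$, and otherwise contains a positive value. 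Hence the a priori inclusion $\essspec{\square^E_q}\subseteq\{0\}$ from \cref{characterization_of_compactness_of_S} is in fact the equality $\essspec{\square^E_q}=\emptyset$, and $N^E_q$ is compact iff $\essspec{\square^{E_j}_v}=\emptyset$ for every $j$ and every \emph{achievable} value $v\in\{0,1\}$, i.e.\ every $v$ realized as some $q_j$ in an admissible multidegree. A one-line count gives: $v=0$ is achievable iff $0\leq q\leq n-1$, and $v=1$ iff $1\leq q\leq n$.

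The eight statements then fall out. For $q=0$ only $v=0$ occurs and for $q=n$ only $v=1$, giving \cref{item:dbar_neumann_noncompactness_riemann_surface_products_compactness_of_bot,item:dbar_neumann_noncompactness_riemann_surface_products_compactness_of_top} via the dictionary; for $1\leq q\leq n-1$ both values occur for every factor, so the condition is the conjunction of the previous two, which is \cref{item:dbar_neumann_noncompactness_riemann_surface_products_compactness_in_middle}. The non-compactness statements are contrapositives: non-compactness of $N^E_0$ (resp.\ $N^E_n$) means $\essspec{\square^{E_{j_0}}_0}\neq\emptyset$ (resp.\ $\essspec{\square^{E_{j_0}}_1}\neq\emptyset$) for some $j_0$, which by achievability kills compactness for all $q\in\{0,\dots,n-1\}$ (resp.\ $q\in\{1,\dots,n\}$), yielding \cref{item:dbar_neumann_noncompactness_riemann_surface_products_noncompactness_of_bot,item:dbar_neumann_noncompactness_riemann_surface_products_noncompactness_of_top}; \cref{item:dbar_neumann_noncompactness_riemann_surface_products_compactness_boils_up} chains \cref{item:dbar_neumann_noncompactness_riemann_surface_products_compactness_in_middle} into these two; \cref{item:dbar_neumann_noncompactness_riemann_surface_products_noncompactness_of_S} follows since a non-compact $S^{E_{j_0}}_1$ makes \emph{both} factor essential spectra nonempty; and \cref{item:dbar_neumann_noncompactness_riemann_surface_products_bergman_space_one_factor} follows from \cref{item:dbar_neumann_noncompactness_riemann_surface_products_compactness_of_bot,item:dbar_neumann_noncompactness_riemann_surface_products_noncompactness_of_bot} because $\dim{\bergman(X_{j_0},E_{j_0})}=\infty$ violates the $q=0$ condition. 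The hard part will be the bookkeeping in the reduction lemma---correctly pinning down the achievable bidegrees per factor and, above all, using nondegeneracy together with $n\geq 2$ to upgrade the mere inclusion $\essspec{\square^E_q}\subseteq\{0\}$ to the sharp equality $\essspec{\square^E_q}=\emptyset$ that makes the factor conditions come out as finiteness of the respective cohomologies.
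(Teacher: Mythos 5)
Your proposal is correct and takes essentially the same route as the paper: the $n$\-/factor essential spectrum formula over multidegrees in $\{0,1\}^n$, nondegeneracy of the $\dbar$\-/complex to upgrade $\essspec{\square^E_q}\subseteq\{0\}$ to $\essspec{\square^E_q}=\emptyset$, and the dictionary translating $\essspec{\square^{E_j}_v}=\emptyset$ into compactness of $S^{E_j}_1$ plus finite\-/dimensionality of the relevant cohomology via \cref{characterization_of_compactness_of_S} and item~\cref{item:N_bounded_zero_in_essential_spectrum} of \cref{inverse_of_laplacian_properties_bounded}. Your ``achievable bidegree'' bookkeeping is just a slightly more systematic packaging of the case analysis the paper performs item by item.
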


\begin{proof}[Proof of \cref{dbar_neumann_noncompactness_riemann_surface_products}]
 The appropriate formula for the essential spectrum of $\square^E_i$ in the case of several factors is
 \begin{equation}\label{eq:dbar_neumann_noncompactness_riemann_surface_products_essential_spectrum}
  \essspec{\square^E_q} = \bigcup_{\substack{K \in \{0,1\}^n \\ \sum_{j=1}^n K_j = q}} \bigcup_{j=1}^n \bigg(\essspec{\square^{E_j}_{K_j}} + \sum_{j' \neq j} \spec{\square^{E_{j'}}_{K_{j'}}}\bigg),
 \end{equation}
 and compactness of $N^E_q$ is equivalent to $\essspec{\square^E_q} = \emptyset$ by item~\cref{item:characterization_of_compactness_of_S_for_product_complex_nondegenerate_essential_spectrum_empty} of \cref{characterization_of_compactness_of_S_for_product_complex}.
 Concerning \cref{item:dbar_neumann_noncompactness_riemann_surface_products_compactness_of_bot}, we have
 $$ \essspec{\square^E_0} = \bigcup_{j=1}^n \bigg(\essspec{\square^{E_j}_0} + \sum_{j' \neq j} \spec{\square^{E_{j'}}_0}\bigg), $$
 hence $\essspec{\square^E_0} \subseteq \{0\}$ if and only if $\essspec{\square^{E_j}_0} = \emptyset$ for all $1 \leq j \leq n$.
 This is the case if and only if all $N^{E_j}_0$ are compact (so that $\essspec{\square^{E_j}_0} \subseteq \{0\}$) and $\dim{\bergman(X_j,E_j)} = \dim{\ltwocohom[0,0]{X_j,E_j}} < \infty$ (so that $0 \not\in \essspec{\square^{E_j}_0}$ by item~\cref{item:N_bounded_zero_in_essential_spectrum} of \cref{inverse_of_laplacian_properties_bounded}).
 Because compactness of $N^{E_j}_0$ is equivalent to compactness of both $S^{E_j}_0 = 0$ and $S^{E_j}_1$ (see \cref{characterization_of_compactness_of_S}), \cref{item:dbar_neumann_noncompactness_riemann_surface_products_compactness_of_bot} follows.
 For \cref{item:dbar_neumann_noncompactness_riemann_surface_products_compactness_of_top}, we use the same argument with the formula
 $$ \essspec{\square^E_n} = \bigcup_{j=1}^n \bigg(\essspec{\square^{E_j}_1} + \sum_{j' \neq j} \spec{\square^{E_{j'}}_1}\bigg). $$
 Note that \cref{item:dbar_neumann_noncompactness_riemann_surface_products_compactness_of_bot,item:dbar_neumann_noncompactness_riemann_surface_products_compactness_of_top} are applications of the several factor version of item~\cref{item:characterization_of_compactness_of_S_for_product_complex_nondegenerate_compactness_of_factors} of \cref{characterization_of_compactness_of_S_for_product_complex}.
 If $q \in \{1,\dotsc,n-1\}$, then for every $1 \leq j \leq n$, there are $K \in \{0,1\}^n$ and $K'\in \{0,1\}^n$ which contribute to \cref{eq:dbar_neumann_noncompactness_riemann_surface_products_essential_spectrum}, and with $K_j = 0$ and $K'_j = 1$.
 Thus, $\essspec{\square^E_q} = \emptyset$ if and only if $\essspec{\square^{E_j}_0} = \essspec{\square^{E_j}_1} = \emptyset$ for all $1\leq j \leq n$, and this is equivalent to $N^E_0$ and $N^E_n$ being compact by the arguments in  \cref{item:dbar_neumann_noncompactness_riemann_surface_products_compactness_of_bot,item:dbar_neumann_noncompactness_riemann_surface_products_compactness_of_top}.
 This proves \cref{item:dbar_neumann_noncompactness_riemann_surface_products_compactness_in_middle}.
 
 Suppose $N^E_0$ is not compact.
 Then there must exist $j_0 \in \{1,\dotsc,n\}$ such that $\essspec{\square^{E_{j_0}}_0} \neq \emptyset$.
 Let $q \in \{0,\dotsc,n-1\}$ and pick $K \subseteq \{0,1\}^n$ with $\sum_{j=1}^n K_j = q$ and $K_{j_0} = 0$.
 Then $\essspec{\square^E_q}$ contains the infinite (since $\square^{E_{j'}}_0$ and $\square^{E_{j'}}_1$ are unbounded self-adjoint operators) set
 $$ \essspec{\square^{E_{j_0}}_0} + \sum_{j'\neq j_0} \spec{\square^{E_{j'}}_{K_{j'}}}, $$
 so $N^E_q$ is not compact.
 This proves \cref{item:dbar_neumann_noncompactness_riemann_surface_products_noncompactness_of_bot}, and a similar argument shows \cref{item:dbar_neumann_noncompactness_riemann_surface_products_noncompactness_of_top}.
 If there is $q_0 \in \{1,\dotsc,n-1\}$ such that $N^E_{q_0}$ is not compact,
 then one of $N^E_0$ and $N^E_n$ is not compact by \cref{item:dbar_neumann_noncompactness_riemann_surface_products_compactness_in_middle}, and \cref{item:dbar_neumann_noncompactness_riemann_surface_products_compactness_boils_up} follows by combining \cref{item:dbar_neumann_noncompactness_riemann_surface_products_noncompactness_of_bot,item:dbar_neumann_noncompactness_riemann_surface_products_noncompactness_of_top}.
 For \cref{item:dbar_neumann_noncompactness_riemann_surface_products_noncompactness_of_S}, combine \crefrange{item:dbar_neumann_noncompactness_riemann_surface_products_compactness_of_bot}{item:dbar_neumann_noncompactness_riemann_surface_products_compactness_in_middle}.
 
 If $j_0$ is as in \cref{item:dbar_neumann_noncompactness_riemann_surface_products_bergman_space_one_factor}, then $N^{E_{j_0}}_0$ fails to be compact by \cref{item:dbar_neumann_noncompactness_riemann_surface_products_compactness_of_bot}, and hence $N^E_q$ is not compact for $q \in \{0,\dotsc,n-1\}$ by \cref{item:dbar_neumann_noncompactness_riemann_surface_products_noncompactness_of_bot}.
\end{proof}

\appendix

\section{Joint spectra and the operator \texorpdfstring{$T\otimes \id{K} + \id{H} \otimes S$}{TxI + IxS}}\label{sec:joint_spectra}

By \cref{laplacian_of_product_hilbert_complex}, the spectrum of the Laplacian for the tensor product of two Hilbert complexes is determined by the closures of the operators $\Delta_j \otimes \id{H'_k} + \id{H_j} \otimes \Delta'_k$, with $\Delta$ and $\Delta'$ being the Laplacians for the individual factors.
Hence we are led to consider operators of the form $T \otimes \id{K} + \id{H}\otimes S$, where $T$ and $S$ are normal operators on Hilbert spaces $H$ and $K$, respectively.

We will make use of the Borel functional calculus for strongly commuting normal operators, where two normal operators on a common Hilbert space are said to \emph{strongly commute} if all their spectral projections commute.
If $T \coloneqq (T_1,\dotsc,T_n)$ is a tuple of pairwise strongly commuting normal operators, then the \emph{spectral theorem} (see \cite[Theorem~5.21]{Schmuedgen2012}) gives the existence of a joint spectral measure $P$ on the Borel sets of $\mathbb C^n$ such that
$$ T_k = \int_{\mathbb C^n} z_k \,dP(z_1,\dotsc,z_n). \quad (1 \leq k \leq n) $$
In fact, the spectral measure $P$ is the product of the spectral measures of $T_1, \dotsc, T_n$, so that
$$ P(M_1\times\dotsm\times M_n) = P_{T_1}(M_1)\dotsm P_{T_n}(M_n) $$
for Borel sets $M_k \subseteq \mathbb C$, $1\leq k \leq n$.

\begin{defn}\label{def:joint_spectrum}
 The \emph{joint spectrum} of the strongly commuting tuple $T = (T_1,\dotsc,T_n)$ is the support of $P$,
 $$ \spec{T} \coloneqq \big\{ z \in \mathbb C^n : P(B_\varepsilon(z)) \neq 0 \text{ for all } \varepsilon >0\big\}, $$
 where $B_\varepsilon(z)$ denotes the open ball in $\mathbb C^n$ with radius $\varepsilon$ and center $z$.
 The \emph{joint essential spectrum} of $T$ is
 $$ \essspec{T} \coloneqq \big\{z \in \mathbb C^n : \rank{P(B_\varepsilon(z))} = \infty \text{ for all } \varepsilon > 0\big\}. $$
 The complement of $\essspec{T}$ in $\spec{T}$ is called the \emph{joint discrete spectrum} of the tuple $T$,
 $$ \dspec{T} \coloneqq \big\{z \in \mathbb C^n : \exists \varepsilon_0 >0 \text{ such that } 0 < \rank{P(B_\varepsilon(z))} < \infty \text{ for all } \varepsilon \in (0,\varepsilon_0)\big\}. $$
\end{defn}

For $n=1$, these definitions reduce to the usual ones for a single operator.
The joint essential spectrum is closed in $\spec{T}$, and $\dspec{T}$ is discrete.
If $f \colon \spec{T} \to \mathbb C$ is a Borel measurable function, then we can use the joint spectral measure to define the normal operator
$$ f(T) \coloneqq \int_{\spec{T}} f\,dP. $$
The assignment $f \mapsto f(T)$ is called the \emph{Borel functional calculus for strongly commuting normal tuples}.
The spectrum of this operator is then the $P$-essential range of $f$,
$$ \spec{f(T)} = \big\{\lambda \in \mathbb C : P(f^{-1}(B_\varepsilon(\lambda))) \neq 0 \text{ for all } \varepsilon > 0\big\}, $$
and its essential spectrum is
$$ \essspec{f(T)} = \big\{\lambda \in \mathbb C : \rank{P(f^{-1}(B_\varepsilon(\lambda)))} = \infty \text{ for all } \varepsilon > 0\big\}. $$
Both of these formulas follow from the fact that the spectral measure associated to $f(T)$ is $P\circ f^{-1}$, where $f^{-1}$ is the preimage map on the Borel sets of $\mathbb C$.

\begin{thm}[Spectral mapping theorem]\label{spectral_mapping_theorem}
 Let $T = (T_1,\dotsc,T_n)$ be a tuple of pairwise strongly commuting normal operators and $f \colon \spec{T} \to \mathbb C$ continuous.
 Then
 $$ \spec{f(T)} = \overline{f(\spec{T})} \quad\text{and}\quad \essspec{f(T)} \supseteq \overline{f(\essspec{T})}. $$
 If $f$ is also \emph{proper} (meaning preimages of compact sets are compact), then
 $$ \spec{f(T)} = f(\spec{T}) \quad\text{and}\quad \essspec{f(T)} = f(\essspec{T}). $$
\end{thm}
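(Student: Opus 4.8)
The plan is to work entirely through the spectral measure $P\circ f^{-1}$ of $f(T)$, using the two formulas for $\spec{f(T)}$ and $\essspec{f(T)}$ recorded just before the statement, together with one elementary support fact: since $\spec{T}$ is precisely the support of $P$, every nonempty relatively open subset $U\subseteq\spec{T}$ satisfies $P(U)\neq 0$. Indeed, picking $z\in U$ and an open ball $B_\delta(z)$ with $B_\delta(z)\cap\spec{T}\subseteq U$, the point $z$ lies in the support, so $P(U)\geq P(B_\delta(z)\cap\spec{T})=P(B_\delta(z))\neq 0$. I will use this together with monotonicity of $P$ and of $\rank{P(\cdot)}$ on nested Borel sets.

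First I would establish $\spec{f(T)}=\overline{f(\spec{T})}$ for continuous $f$. By continuity each preimage $f^{-1}(B_\varepsilon(\lambda))$ is relatively open in $\spec{T}$, so the support fact gives $P(f^{-1}(B_\varepsilon(\lambda)))\neq 0$ if and only if $f^{-1}(B_\varepsilon(\lambda))\neq\emptyset$, i.e.\ if and only if $B_\varepsilon(\lambda)\cap f(\spec{T})\neq\emptyset$; quantifying over $\varepsilon>0$ this says exactly $\lambda\in\overline{f(\spec{T})}$. For the inclusion $\overline{f(\essspec{T})}\subseteq\essspec{f(T)}$ I would fix $z\in\essspec{T}$ and set $\lambda=f(z)$; continuity at $z$ yields $\delta>0$ with $B_\delta(z)\cap\spec{T}\subseteq f^{-1}(B_\varepsilon(\lambda))$, and since $\rank{P(B_\delta(z))}=\infty$, monotonicity forces $\rank{P(f^{-1}(B_\varepsilon(\lambda)))}=\infty$ for every $\varepsilon$, so $\lambda\in\essspec{f(T)}$. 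As $\essspec{f(T)}$ is closed in $\mathbb C$, taking closures gives the claimed inclusion.

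For the proper case I would first note that $\spec{T}$ and $\essspec{T}$ are closed subsets of $\mathbb C^n$ (the former as a support, the latter being closed in $\spec{T}$), and that a continuous proper map into the locally compact Hausdorff space $\mathbb C$ is a closed map; hence $f(\spec{T})$ and $f(\essspec{T})$ are already closed. This immediately upgrades the first two results to $\spec{f(T)}=f(\spec{T})$ and $\essspec{f(T)}\supseteq f(\essspec{T})$. It remains to prove the reverse inclusion $\essspec{f(T)}\subseteq f(\essspec{T})$, which I expect to be the main obstacle.

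Given $\lambda\in\essspec{f(T)}$, I would examine the fiber $f^{-1}(\{\lambda\})$, which is compact by properness, and argue that it must meet $\essspec{T}$. Suppose not; then $f^{-1}(\{\lambda\})$ is a compact subset of the discrete set $\dspec{T}$, hence finite, say $\{w_1,\dotsc,w_m\}$, and I may choose pairwise disjoint balls $B_{\varepsilon_i}(w_i)$ on each of which $P$ has finite rank. A subsequence argument then shows $f^{-1}(B_\varepsilon(\lambda))\subseteq\bigcup_i B_{\varepsilon_i}(w_i)$ for all sufficiently small $\varepsilon$: otherwise one extracts points $z_n\in f^{-1}(B_{1/n}(\lambda))$ lying outside $\bigcup_i B_{\varepsilon_i}(w_i)$, which (lying in the compact set $f^{-1}(\overline{B_1(\lambda)})$) subconverge to some $z_\ast$ with $f(z_\ast)=\lambda$ and $z_\ast\notin\bigcup_i B_{\varepsilon_i}(w_i)$, contradicting $z_\ast\in f^{-1}(\{\lambda\})=\{w_1,\dotsc,w_m\}$. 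But then $\rank{P(f^{-1}(B_\varepsilon(\lambda)))}\leq\sum_i\rank{P(B_{\varepsilon_i}(w_i))}<\infty$, contradicting $\lambda\in\essspec{f(T)}$. Hence $f^{-1}(\{\lambda\})$ contains a point $z\in\essspec{T}$, and $\lambda=f(z)\in f(\essspec{T})$, completing the proof.
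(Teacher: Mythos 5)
Your argument is correct and follows essentially the same route as the paper: the continuity/monotonicity argument for $\essspec{f(T)}\supseteq \overline{f(\essspec{T})}$ is the same, and in the proper case both proofs reduce the reverse inclusion to the observation that properness produces a compact preimage inside the discrete spectrum, which is therefore finite and carries a finite\-/rank spectral projection. The only cosmetic differences are that you prove $\spec{f(T)}=\overline{f(\spec{T})}$ from the support characterization rather than citing it, and you localize at the fiber $f^{-1}(\{\lambda\})$ with a subsequence argument where the paper instead uses closedness of $f(\essspec{T})$ to work directly with $f^{-1}(\overline{B_\varepsilon(\lambda)})$.
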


\begin{proof}
 The spectral mapping theorem for the joint spectrum is well-known and can be found in \cite[Proposition~5.25]{Schmuedgen2012}.
 The proof of $\essspec{f(T)} \supseteq \overline{f(\essspec{T})}$ is similar to the corresponding inclusion for the joint spectrum:
 If $\lambda \in \overline{f(\essspec{T})}$ and $\varepsilon > 0$, then there is $z \in \essspec{T}$ with $|f(z) - \lambda| < \varepsilon/2$.
 Since $f$ is continuous, there exists $\delta > 0$ such that $B_\delta(z) \subseteq f^{-1}(B_\varepsilon(\lambda))$.
 Because $z$ is in the joint essential spectrum, $P(B_\delta(z))$ and hence also $P(f^{-1}(B_\varepsilon(\lambda)))$ has infinite rank, meaning $\lambda \in \essspec{f(T)}$.
 
 Now let $f \colon \spec{T} \to \mathbb C$ be proper.
 Then $f$ is a closed map, see \cite[Corollary]{Palais1970}.
 Hence we only have to show $\essspec{f(T)} \subseteq f(\essspec{T})$.
 If $\lambda \not\in f(\essspec{T})$, then there exists $\varepsilon > 0$ such that $f^{-1}(\overline{B_\varepsilon(\lambda)}) \cap \essspec{T} = \emptyset$.
 As $f$ is proper, the set $V\coloneqq f^{-1}(\overline{B_\varepsilon(\lambda)})$ is a compact subset of $\spec{T}$ contained in the joint discrete spectrum, implying that $P(V)$ and hence $P(f^{-1}(B_\varepsilon(\lambda)))$ has only finite dimensional range.
 Therefore, $\lambda \not\in \essspec{f(T)}$.
\end{proof}

Our principal example of a strongly commuting tuple will be the following:

\begin{lem}\label{joint_spectrum_of_operators_tensor_id}
 Let $T$ and $S$ be normal operators on Hilbert spaces $H$ and $K$, respectively.
 Then the operators $T\htensor \id{K}$ and $\id{H} \htensor S$ form a strongly commuting normal pair on the Hilbert space $H\htensor K$.
 We have
 $$ \spec{T\htensor \id{K},\id{H}\htensor S} = \spec{T}\times \spec{S} $$
 and
 $$ \essspec{T\htensor \id{K},\id{H}\htensor S} = \big(\essspec{T}\times \spec{S}\big)\cup \big(\spec{T} \times \essspec{S}\big). $$
\end{lem}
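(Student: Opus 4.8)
The plan is to realize the joint spectral measure of the pair as a product of the spectral measures of $T$ and $S$, and then extract the spectrum from its support and the essential spectrum from the ranks of its values. Write $P_T$ and $P_S$ for the spectral measures of $T$ and $S$. First I would note that $T \htensor \id{K}$ is normal, with adjoint $T^* \htensor \id{K}$ and spectral measure $M \mapsto P_T(M) \htensor \id{K}$; symmetrically $\id{H} \htensor S$ is normal with spectral measure $N \mapsto \id{H} \htensor P_S(N)$. Since $(P_T(M) \htensor \id{K})(\id{H} \htensor P_S(N)) = P_T(M) \htensor P_S(N) = (\id{H} \htensor P_S(N))(P_T(M) \htensor \id{K})$, all spectral projections commute and the pair strongly commutes. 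Using the product description of the joint spectral measure recorded before \cref{def:joint_spectrum}, the joint measure $P$ on the Borel sets of $\mathbb{C}^2$ therefore satisfies $P(M \times N) = P_T(M) \htensor P_S(N)$ on rectangles.

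By \cref{def:joint_spectrum}, the joint spectrum is the support of $P$. Since the support of such a product measure is the product of the supports of its factors, this gives $\spec{T \htensor \id{K}, \id{H} \htensor S} = \supp P_T \times \supp P_S = \spec{T} \times \spec{S}$, which is the first formula; concretely, $P(B_\varepsilon(z_1) \times B_\varepsilon(z_2)) \neq 0$ for all $\varepsilon$ if and only if both $z_1 \in \spec{T}$ and $z_2 \in \spec{S}$.

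For the essential spectrum the key input is the multiplicativity of rank under tensoring of projections, $\rank{P_T(M) \htensor P_S(N)} = \rank{P_T(M)} \cdot \rank{P_S(N)}$, where this cardinal product is infinite exactly when both factors are nonzero and at least one is infinite, and vanishes as soon as one factor does. To pass from the balls in the definition of $\essspec{}$ to rectangles I would use the squeeze
$$ B_{\varepsilon/\sqrt 2}(z_1) \times B_{\varepsilon/\sqrt 2}(z_2) \subseteq B_\varepsilon(z_1,z_2) \subseteq B_\varepsilon(z_1) \times B_\varepsilon(z_2), $$
so that by monotonicity of rank, $\rank{P(B_\varepsilon(z_1,z_2))} = \infty$ for all $\varepsilon > 0$ if and only if $\rank{P_T(B_\varepsilon(z_1))} \cdot \rank{P_S(B_\varepsilon(z_2))} = \infty$ for all $\varepsilon > 0$. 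A short case analysis then identifies this condition: the product is nonzero for every $\varepsilon$ exactly when $z_1 \in \spec{T}$ and $z_2 \in \spec{S}$, and if in addition neither $z_1 \in \essspec{T}$ nor $z_2 \in \essspec{S}$ held, then $z_1 \in \dspec{T}$ and $z_2 \in \dspec{S}$ would make both ranks finite for all small $\varepsilon$, contradicting infinitude of the product. Hence the condition is equivalent to $(z_1,z_2)$ lying in $\spec{T} \times \spec{S}$ with at least one coordinate essential, which is precisely $(\essspec{T} \times \spec{S}) \cup (\spec{T} \times \essspec{S})$.

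The step I expect to be the main obstacle is this essential-spectrum case analysis: one must combine the rank multiplicativity (watching the convention that one vanishing factor kills the product, which is what excludes points having a coordinate outside the relevant spectrum) with the ball-versus-rectangle comparison and the monotonicity of $\varepsilon \mapsto \rank{P_T(B_\varepsilon(z_1))}$, so that the dichotomy forcing at least one essential coordinate comes out uniformly in $\varepsilon$ rather than being allowed to switch coordinates as $\varepsilon$ shrinks.
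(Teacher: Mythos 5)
Your proposal is correct and follows essentially the same route as the paper: identify the spectral measures of $T\htensor\id{K}$ and $\id{H}\htensor S$, observe that the joint spectral measure is $P_T(M)\htensor P_S(N)$ on rectangles, and read off the (essential) spectrum from the nonvanishing (resp.\ infinite rank) of these product projections, using that products of discs form a basis of the topology of $\mathbb C^2$. The only difference is that you spell out the ball-versus-rectangle squeeze and the case analysis that the paper leaves implicit.
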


\begin{proof}
 The spectral measures of $T\htensor \id{K}$ and $\id{H} \htensor S$ are, respectively, given by
 $$ M \mapsto P_T(M)\htensor \id{K} \quad\text{and}\quad N \mapsto \id{H} \htensor P_S(N), $$
 where $P_T$ and $P_S$ are the spectral measures of $T$ and $S$, respectively.
 Therefore, the joint spectral measure of the pair $(T\htensor \id{K},\id{H}\htensor S)$ is given on rectangles $M\times N \subseteq \mathbb C^2$ by $P_T(M) \htensor P_S(N)$, and its image is
 $$ \img{P_T(M) \htensor P_S(N)} = \img{P_T(M)}\htensor \img{P_S(N)}. $$
 Now it follows that the image of $P_T(M)\htensor P_S(N)$ is nonzero (resp.\ infinite dimensional) if and only if both factors are nonzero (resp.\ at least one of them has infinite dimension and the other is nonzero).
 Since the products of open discs form a basis for the topology of $\mathbb C^2$, the result follows immediately.
\end{proof}

\begin{thm}\label{spectrum_of_T_plus_S}
 Let $T$ and $S$ be self-adjoint operators on Hilbert spaces $H$ and $K$, respectively.
 Put
 $$ A \coloneqq \int_{\mathbb R^2} (t + s)\,dP(t,s), $$
 where $P$ is the joint spectral measure of the pair $(T\htensor \id{K},\id{H}\htensor S)$.
 Then $A$ is the closure of the operator $T \otimes \id{K} + \id{H} \otimes S$ on $H\htensor K$.
 Moreover,
 \begin{equation}\label{eq:spectrum_of_T_plus_S}
  \spec{A} = \overline{\spec{T} + \spec{S}} \quad\text{and}\quad \essspec{A} \supseteq \overline{\essspec{T} + \spec{S}} \cup \overline{\spec{T} + \essspec{S}}.
 \end{equation}
 If, in addition, $T$ and $S$ are positive, then
 \begin{equation}\label{eq:spectrum_of_T_plus_S_positive}
  \spec{A} = \spec{T} + \spec{S} \quad\text{and}\quad \essspec{A} = \big(\essspec{T} + \spec{S}\big) \cup \big(\spec{T} + \essspec{S}\big).
 \end{equation}
\end{thm}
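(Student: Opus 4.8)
The plan is to identify $A$ with the image of the sum function under the Borel functional calculus for the strongly commuting pair $(T \htensor \id{K}, \id{H} \htensor S)$, and then to read off both its spectrum and its essential spectrum directly from \cref{joint_spectrum_of_operators_tensor_id,spectral_mapping_theorem}.

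First I would establish that $A$ is the closure of $T \otimes \id{K} + \id{H} \otimes S$. Since the joint spectral measure $P$ is the product of $P_T$ and $P_S$ (as recorded in \cref{joint_spectrum_of_operators_tensor_id}), for $x \in \dom T$ and $y \in \dom S$ the scalar measure $\langle P(\cdot)(x \otimes y), x \otimes y\rangle$ equals the product $\mu_x \times \nu_y$, where $\mu_x \coloneqq \langle P_T(\cdot)x, x\rangle$ and $\nu_y \coloneqq \langle P_S(\cdot)y, y\rangle$. The estimate $\int |t+s|^2\,d(\mu_x \times \nu_y) \le 2\|y\|^2 \int t^2\,d\mu_x + 2\|x\|^2 \int s^2\,d\nu_y < \infty$ then shows $x \otimes y \in \dom A$ with $A(x \otimes y) = Tx \otimes y + x \otimes Sy$, so that $A$ extends the algebraic operator $T \otimes \id{K} + \id{H} \otimes S$ on $\dom T \otimes \dom S$. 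As $f(t,s) = t+s$ is real-valued, $A$ is self-adjoint; and since the algebraic operator is essentially self-adjoint by \cite[Theorem~VIII.33]{Reed1980} (as already used in the proof of \cref{laplacian_of_product_hilbert_complex}) and admits the self-adjoint extension $A$, it follows that $A$ is precisely its closure.

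For the spectral statements, $A$ is by definition $f(T \htensor \id{K}, \id{H} \htensor S)$ with the continuous function $f(t,s) \coloneqq t+s$. I would therefore feed the joint spectrum $\spec{T} \times \spec{S}$ and the joint essential spectrum $(\essspec{T} \times \spec{S}) \cup (\spec{T} \times \essspec{S})$ from \cref{joint_spectrum_of_operators_tensor_id} into \cref{spectral_mapping_theorem}. Because $f$ carries Cartesian products to Minkowski sums and closure commutes with finite unions, this yields $\spec{A} = \overline{\spec{T} + \spec{S}}$ together with $\essspec{A} \supseteq \overline{\essspec{T} + \spec{S}} \cup \overline{\spec{T} + \essspec{S}}$, which is \cref{eq:spectrum_of_T_plus_S}.

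For the positive case the key observation is that $f$, restricted to $\spec{T} \times \spec{S} \subseteq [0,\infty)^2$, is \emph{proper}: if $C \subseteq [-R,R]$ is compact, then $t,s \ge 0$ and $t+s \in C$ force $(t,s) \in [0,R]^2$, so $f^{-1}(C)$ is closed and bounded, hence compact. The proper half of \cref{spectral_mapping_theorem} then removes all closures and delivers $\spec{A} = \spec{T} + \spec{S}$ and $\essspec{A} = (\essspec{T} + \spec{S}) \cup (\spec{T} + \essspec{S})$, which is \cref{eq:spectrum_of_T_plus_S_positive}. I expect the main obstacle to be the second paragraph: once $A$ is correctly matched with the closure of the naive algebraic sum, the spectral conclusions are a mechanical application of the spectral mapping theorem to this particular $f$, so the substantive work is the domain computation combined with essential self-adjointness.
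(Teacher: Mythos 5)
Your proposal is correct and follows essentially the same route as the paper: identify $A$ as $f(T\htensor\id{K},\id{H}\htensor S)$ for $f(t,s)=t+s$, use essential self-adjointness from \cite[Theorem~VIII.33]{Reed1980} to pin $A$ down as the closure, and then apply \cref{joint_spectrum_of_operators_tensor_id,spectral_mapping_theorem}, with properness of $f$ on $[0,\infty)^2$ handling the positive case. The only difference is that you spell out the domain computation on elementary tensors (via the product of the scalar spectral measures), which the paper dismisses as ``easy to show''; your version of that step is correct.
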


\begin{proof}
 It is easy to show that $A$ is a self-adjoint extension of $T \otimes \id{K} + \id{H} \otimes S$.
 Because $T\otimes \id{K} + \id{H} \otimes S$ is essentially self-adjoint, see \cite[Theorem~VIII.33]{Reed1980}, $A$ must be its closure. 
 Equation~\cref{eq:spectrum_of_T_plus_S} follows from \cref{joint_spectrum_of_operators_tensor_id,spectral_mapping_theorem} by applying it to the function $f \colon \mathbb C^2 \to \mathbb C$, $f(t,s) \coloneqq t+s$.
 If $T$ and $S$ are positive, then so are $T\htensor \id{K}$ and $\id{H} \htensor S$, and hence $\spec{T\htensor \id{K},\id{H}\htensor S}$ is contained in $[0,\infty)\times [0,\infty)$.
 On this set, $f$ is proper and \cref{eq:spectrum_of_T_plus_S_positive} follows, again, from \cref{joint_spectrum_of_operators_tensor_id,spectral_mapping_theorem}.
\end{proof}

\begin{rem}
 Of course, the joint spectrum of $(T\htensor \id{K}, \id{H}\htensor S)$ and the spectrum of the closure of $T\otimes \id{K} + \id{H}\otimes S$ are well-known in the literature, see for instance \cite[Lemma~7.24]{Schmuedgen2012} or \cite[Theorem~VIII.33]{Reed1980}.
 However, the corresponding statements regarding their \emph{essential} spectrum, as well as the spectral mapping theorem for $\essspec{f(T)}$, seem to be new (at least to the knowledge of the author).
\end{rem}

\printbibliography

\end{document}